\numberwithin{equation}{section}
\def\p{\partial}
\newtheorem{theorem}{Theorem}[section]
\newtheorem{lemma}[theorem]{Lemma}
\newtheorem{corollary}[theorem]{Corollary}
\newtheorem{proposition}[theorem]{Proposition}
\theoremstyle{definition}
\newtheorem{definition}[theorem]{Definition}
\newtheorem{remark}[theorem]{Remark}
\newtheorem*{rem}{Remark}
\newtheorem*{prop}{Proposition}
\newtheorem*{exam}{Example}
\newtheoremstyle{thm1st}{\topsep}{\topsep}%
     {}%         Body font
     {}%         Indent amount (empty = no indent, \parindent = para indent)
     {\bfseries}% Thm head font
     {}%        Punctuation after thm head
     {.5em}%     Space after thm head (\newline = linebreak)
     {\thmname{#1}\thmnumber{ #2}\thmnote{ #3}}%         Thm head spec
\newcommand{\Extend}[5]{\ext@arrow0099{\arrowfill@#1#2#3}{#4}{#5}}
\begin{document}
\title[Positive Scalar Curvature I]{contractible $3$-manifolds and positive scalar curvature (I)}

\author[Jian. Wang]{Jian Wang}
\address{Universit\"at Augsburg, Institut f\"ur Mathematik, Lehrstuhl f\"ur Differentialgeometrie, Universit\"atsstr. 14, 86159 Augsburg, Germany}

\address{Universit\'e Grenoble Alpes, Institut Fourier, 100 rue des maths, 38610 Gi\`eres, France}
\email{jian.wang.4@stonybrook.edu}

\maketitle
\begin{abstract} In this work we prove that the Whitehead manifold has no complete metric of non-negative scalar curvature.  This result can be generalized to the genus one case. Precisely, we show that no contractible genus one $3$-manifold admits a complete metric of non-negaitve scalar curvature. 
\end{abstract}

\section{Introduction}
If $(M^{n}, g)$ is a Riemannian manifold, its scalar curvature  is the sum of all sectional curvatures. A complete metric $g$ over $M$ is said to have positive scalar curvature (\emph{hereafter, psc}) if its scalar curvature is positive for all $x\in M$. There are many topological obstructions for a smooth manifold to have a complete \emph{psc} metric.

For example, from the proof of Thurston's geometrization conjecture in \cite{P1,P2,P3}, if $(M^{3}, g)$ is a compact 3-manifold with positive scalar curvature, then $M$ is a connected sum of some spherical 3-manifolds and some copies of $\mathbb{S}^{1}\times \mathbb{S}^{2}$. Recently, Bessi\`eres, Besson and Maillot \cite{BBM} obtained a similar result for (non-compact) $3$-manifolds with uniformly positive scalar curvature and bounded geometry. 

 A fundamental question is how to classify the topological structure of non-compact 3-manifolds with positive scalar curvature. Let us consider contractible 3-manifolds. For example, $\mathbb{R}^{3}$ admits a complete metric $g_{1}$ with positive scalar curvature, where  $$g_{1}=\sum_{k=1}^{3} (dx_{k})^{2}+(\sum_{k=1}^{3}x_{k}dx_{k})^{2}.$$
So far, $\mathbb{R}^{3}$ is the only known contractible 3-manifold which admits a complete \emph{psc} metric. This suggests the following question:\par
\vspace{2mm}
\noindent \textbf{Question:}
\emph{Is any complete contractible 3-manifold with positive scalar curvature homeomorphic to $\mathbb{R}^{3}$} ?\par
\vspace{2mm}
Note that for a compact $3$-manifold, its topological structure is fully determined by its homotopy type. However, it is known from \cite{Wh} that the topological structure of contractible 3-manifolds is much more complicated. For example, the Whitehead manifold (constructed in \cite{Wh}) is a contractible 3-manifold but not homeomorphic to $\mathbb{R}^{3}$.\par

In order to explain the construction of the Whitehead manifold, let us introduce the concept of a meridian. A meridian $\gamma\subset \partial N$ of the closed solid torus $N$ is an embedded circle which is nullhomotopic in $N$ but  not contractible in $\partial N$ (see Definition \ref{mer}). 

The Whitehead manifold is constructed from the Whitehead link. Recall that the Whitehead link is a link with two components illustrated in the below figure: 

\begin{figure}[H]
\centering{
\def\svgwidth{\columnwidth}
{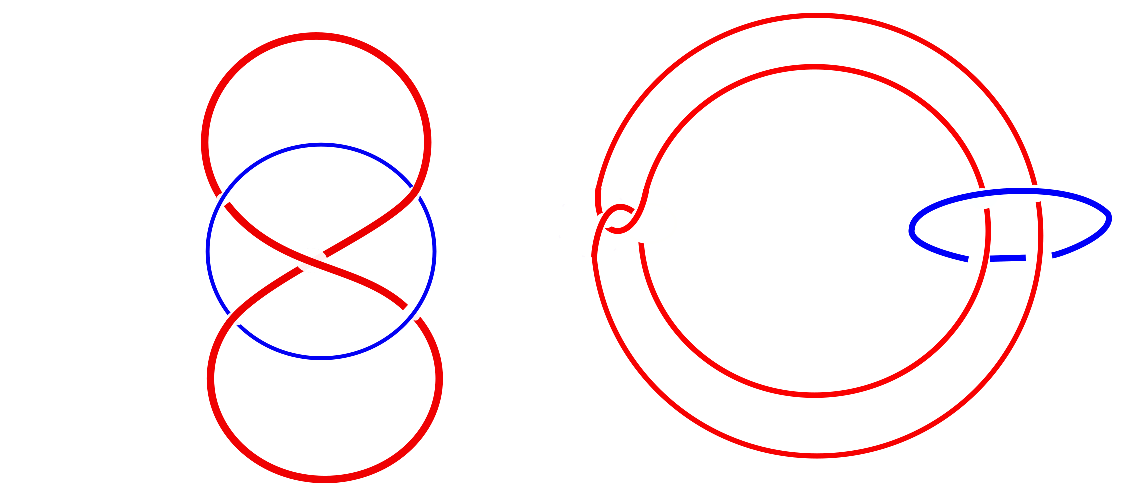}
\caption{}
\label{Fig1}
}
\end{figure}
 Choose a compact unknotted solid torus $T_{1}$ in $\mathbb{S}^{3}$. The closed complement of the solid torus inside $\mathbb{S}^{3}$ is another solid torus.  Take a second solid torus $T_{2}$ inside $T_{1}$ so that the core $K_{2}$ of $T_{2}$ (i.e. a deformation retraction of $T_2$) forms a Whitehead link with any meridian of the solid torus $T_{1}$. Similarly, $T_{2}$ is an unknotted solid torus. Then, embed $T_{3}$ inside $T_{2}$ in the same way as $T_{2}$ lies in $T_{1}$ and so on infinitely many times. Define $T_{\infty}:=\cap_{k=1}^{\infty} T_{k}$, called the Whitehead Continuum.

  The Whitehead manifold is defined as $Wh:=\mathbb{S}^{3}\setminus T_{\infty}$ which is an open $3$-manifold. 
\begin{figure}[H]
\centering{
\def\svgwidth{\columnwidth}
{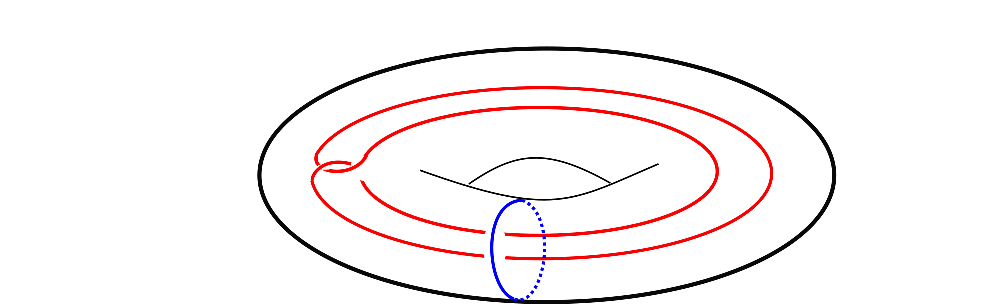}
\caption{}
\label{Fig2}
}
\end{figure}

Variation on the construction, like changing the knot at each step $k$, gives the family of so-called genus one $3$-manifolds, introduced in \cite{Mc} (see Definition \ref{G}). Remark that $\mathbb{R}^{3}$ is not genus one but genus zero, since it is an increasing union of $3$-balls. The construction can also be extended to the higher genus case \cite{Mc}.

An interesting question is whether the Whitehead manifold admits a complete metric with positive scalar curvature. In this article we answer it negatively:

\begin{theorem}\label{A}The  Whitehead manifold does not admit any complete metric with positive scalar curvature.\end{theorem}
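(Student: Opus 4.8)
\noindent\emph{Proof idea.} Suppose, for contradiction, that $g$ is a complete metric on $Wh$ with $R_g>0$ everywhere. The engine of the argument is the classical fact that positive scalar curvature obstructs stable minimal surfaces in dimension three: if $\Sigma$ is a closed, two-sided, stable minimal surface in a $3$-manifold with $R_g>0$, then inserting the constant test function into the stability inequality and combining the Gauss equation with Gauss-Bonnet yields $\int_\Sigma K_\Sigma \ge \tfrac12\int_\Sigma (R_g+|A|^2)>0$, so $\chi(\Sigma)>0$ and $\Sigma\cong\mathbb{S}^2$. Hence every stable minimal surface one manages to produce must be a sphere, and, quantitatively, the Schoen-Yau and Gromov-Lawson curvature estimates bound its area and second fundamental form on any region where $R_g$ is bounded below. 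A first technical point is that $R_g$ is only assumed positive, not uniformly so; I would address this by working on the compact pieces of the exhaustion and, where a uniform lower bound is genuinely needed, replacing minimal surfaces by Gromov's $\mu$-bubbles (prescribed-mean-curvature surfaces), which give the same topological dichotomy under a merely pointwise curvature hypothesis. Note that contractibility of $Wh$ means the fundamental group offers no leverage at all, so the obstruction must come from a secondary invariant, namely the Whitehead clasp.

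Next I would exploit the defining topology of $Wh=\bigcup_k N_k$. Each $N_k$ is a solid torus with $N_k\subset N_{k+1}$; the core circle $\gamma_k$ of $N_k$ is null-homotopic in $N_{k+1}$ (this is exactly the clasp that forces $\pi_1(Wh)=0$), yet $N_k$ cannot be isotoped off a meridian disk of $N_{k+1}$: its geometric intersection number with such a disk is at least two while its algebraic intersection number is zero. Since $Wh$ is simply connected, each $\gamma_k$ bounds a singular disk and each boundary torus $\partial N_k$ is compressible. I would use this to run a Simon-Smith-type min-max (or a $\mu$-bubble minimization) over a sweepout of $N_{k+1}$ by meridian disks, producing for each $k$ an embedded minimal surface $\Sigma_k$ inside the band $W_k=\overline{N_{k+1}\setminus N_k}$ with boundary controlled along $\partial N_{k+1}$ and $\partial N_k$. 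By the dichotomy above, every component of $\Sigma_k$ that closes up is a sphere, and, for the disk-type pieces, the curvature estimates bound their local geometry on $W_k$.

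The heart of the matter is to play the Whitehead clasp against this genus-zero, controlled-geometry conclusion. Spheres and embedded controlled disks cannot realize an essential clasp: a genus-zero surface can always be isotoped off $\gamma_k$, whereas the non-removable geometric intersection number at least two says the clasp survives every isotopy. Thus a controlled minimal disk cannot simultaneously be embedded and span across the clasp, and iterating over the infinitely many genuinely clasped bands $W_1,W_2,\dots$ forces either a failure of embeddedness, which contradicts the sphere/embedded-minimal conclusion, or an unbounded accumulation of area and extrinsic complexity inside the nested bands. Either alternative contradicts the psc control. I expect the main obstacle to be precisely this conversion: extracting a \emph{quantitative}, metric-independent cost from the purely topological clasp (a "clasping width" bounded below independently of $g$) and matching it against the curvature estimates, all while guaranteeing that the surfaces $\Sigma_k$ remain properly embedded and do not slide off to infinity as $k\to\infty$---especially since $\inf R_g$ may decay along the exhaustion, so no single band yields the contradiction on its own. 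Securing compactness of the sequence $\{\Sigma_k\}$ and ensuring its limit captures the clasp rather than degenerating is the delicate analytic core.

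Finally, the scheme is insensitive to the particular knot used at each stage: it uses only that the building blocks $N_k$ are genus-one handlebodies and that consecutive embeddings are essential (null-homotopic core together with a non-trivial clasp). Consequently the same argument applies verbatim to any contractible genus-one $3$-manifold, yielding the stated generalization, whereas the genus-zero case ($\mathbb{R}^3$, an increasing union of balls) escapes the obstruction because balls carry no clasp.
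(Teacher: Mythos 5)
Your outline correctly identifies the two actors --- stable minimal surfaces produced along the exhaustion $Wh=\bigcup_k N_k$, and the non-removable clasp measured by the geometric index --- but the step you yourself flag as ``the delicate analytic core'' is exactly where the proof lives, and it is not supplied. First, the quantitative cost of the clasp is not a heuristic ``clasping width'': it is the combinatorial statement (Theorem \ref{component}, axiomatized as Property $P$ in Definition \ref{P}) that any embedded disc in $Wh$ whose boundary is an essential curve on $\partial N_{k}$ must meet $\text{Int}~N_{l}$ in at least $I(N_{l},N_{k})=2^{k-l}$ components that reach $N_{0}$; this is proved by induction on $k$ using the zero linking number of the Whitehead clasp and the Loop Theorem, and without this exponential count there is nothing concrete to match against the curvature estimates. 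Second, your proposed dichotomy (``failure of embeddedness or unbounded accumulation of area'') is not how the contradiction arises: the Meeks--Yau discs spanning meridians of $N_{k}$ \emph{are} embedded and \emph{do} span across the clasp, so embeddedness never fails. The contradiction is quantitative and lives on the limit objects: each of the $2^{k-1}$ components of $\Omega_{k}\cap N_{1}$ meeting $N_{0}$ carries a definite amount of area (Meeks--Yau monotonicity on the fixed compact set $N_{1}$, where $\inf\kappa>0$ automatically), while a complete non-compact stable minimal surface in a psc manifold is a conformal plane satisfying the extrinsic Cohn--Vossen inequality $\int_{\Sigma}\kappa\,dv\le 2\pi$ (Theorem \ref{proper}). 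The closed-surface obstruction $\Sigma\cong\mathbb{S}^{2}$ that you take as your engine is not the relevant tool here, since the surfaces that matter are non-compact leaves; and the $\mu$-bubble/min-max machinery is neither needed (pointwise positivity suffices because the area lower bound is only used on $N_{1}$) nor shown to produce meridian-spanning discs with the required boundary behaviour.

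The second genuine gap is the convergence scheme. Precisely because the areas $\mathrm{Area}(\Omega_{k}\cap N_{1})\gtrsim 2^{k-1}$ blow up, there is no uniform local area bound and the sequence $\{\Omega_{k}\}$ cannot converge to a surface in the classical sense; one must pass to a minimal lamination $\mathscr{L}$, prove each leaf is a \emph{proper} stable plane (properness again via Theorem \ref{proper}), show using the same counting that beyond some $k_{0}$ no leaf meets $\partial N_{k}$ in a curve essential in $\partial N_{k}$ (Lemma \ref{trivial}), and then reach the contradiction by producing a longitude of $N_{k}$ disjoint from $\mathscr{L}$ (Lemma \ref{covering}) while every $\Omega_{j}$ with $j$ large must contain a meridian of $N_{k}$ crossing that longitude (Lemma \ref{one}). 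You acknowledge that ``securing compactness and ensuring the limit captures the clasp'' is delicate, but you do not propose a mechanism for either; as written, the argument stops exactly where the actual work begins.
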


The Whitehead manifold is the simplest example of contractible genus one $3$-manifolds. This result can be extended to the genus one case: 

\begin{theorem}\label{B} No contractible genus one $3$-manifold admits a complete metric with positive scalar curvature.
\end{theorem}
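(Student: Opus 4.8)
The plan is to prove Theorem \ref{B} by contradiction, assuming a contractible genus one $3$-manifold $M$ carries a complete metric $g$ of positive scalar curvature, and then extracting from the nested-solid-torus structure $M=\bigcup_k N_k$ a geometric object that the psc hypothesis forbids. The central idea, in the spirit of Schoen--Yau and Gromov--Lawson, is that positive scalar curvature obstructs the existence of certain stable minimal surfaces: a closed stable minimal two-sided surface in a psc $3$-manifold must be a sphere (by the second variation formula and the Gauss--Bonnet theorem), and more usefully, complete \emph{noncompact} stable minimal surfaces are severely constrained. The genus one topology provides, at each stage $k$, an essential torus or an incompressible annulus separating $N_k$ from $N_{k+1}$, and the meridian/Whitehead-link linking pattern guarantees that these surfaces cannot be capped off trivially, so minimizing in the relevant homotopy or homology class should yield a stable minimal surface of nonpositive Euler characteristic, contradicting psc.

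\medskip

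\noindent\textbf{First steps.} First I would set up the exhaustion $M=\bigcup_k N_k$ by closed solid tori, recording that each inclusion $N_k\hookrightarrow N_{k+1}$ carries the meridian of $N_k$ to a curve linking the core of $N_{k+1}$ in the Whitehead (or, in the genus one case, the prescribed knotted) pattern. The key homotopy-theoretic input is that, although $M$ is contractible, the solid tori themselves are \emph{not} simply connected, and the meridian disks they bound must pass through the ``clasp'' region of the next stage; this is what makes $M$ non-simply-connected-at-infinity and distinguishes it from $\mathbb{R}^3$. I would then fix a meridian disk $D_k\subset N_{k+1}$ whose boundary $\partial D_k$ is a meridian of $N_k$, and consider minimizing area among surfaces (or among maps of the disk) with this fixed boundary and with the homotopy constraint forcing passage through the linking region.

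\medskip

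\noindent\textbf{The main construction.} The heart of the argument is to produce a complete, noncompact, stable minimal surface $\Sigma$ in $(M,g)$, or a sequence of minimal surfaces whose areas blow up in a controlled way, and to derive a contradiction with completeness plus positive scalar curvature. Here I would invoke the standard fact that, in a complete $3$-manifold of positive scalar curvature, a complete two-sided stable minimal surface is conformally either the plane or the cylinder and carries strong curvature bounds (via the stability inequality tested against the conformal factor, as in the Schoen--Yau and Fischer-Colbrie--Schoen theory). The strategy is to solve a Plateau-type problem in each $N_k$ with boundary a meridian, obtain area-minimizing disks $\Sigma_k$, and pass to a limit: the meridians are forced by the linking to have geometrically controlled length, yet the minimal disks they bound must ``stretch'' across successive clasping regions. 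Either the limit is a complete stable minimal surface of the wrong topology (a plane that, because of the Whitehead linking, cannot exist without creating an essential incompressible torus), or the area estimates fail, and in both cases psc is contradicted.

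\medskip

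\noindent\textbf{Main obstacle.} The hard part will be controlling the minimal surfaces near infinity and ruling out degeneration: without an a priori diameter or area bound coming from psc (positive scalar curvature alone gives no such global control, unlike \emph{uniformly} positive scalar curvature in \cite{BBM}), the minimizing disks could escape to infinity or collapse before forming a usable limit. Surmounting this requires a quantitative translation of the topological linking condition into a geometric lower bound on the ``width'' or area of the relevant surfaces at each stage, combined with the curvature estimates forced by stability. Concretely, I expect the crux to be showing that the Whitehead clasping obstructs filling each meridian by a disk of bounded area in a psc metric, so that the nested structure yields a contradiction; making this filling obstruction precise, and compatible with the noncompact minimal surface regularity theory, is the step I anticipate being the most delicate.
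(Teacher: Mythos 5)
Your overall strategy coincides with the paper's: argue by contradiction, span a meridian of each $N_k$ by an area-minimizing (Meeks--Yau) disc $\Omega_k$, pass to a limit, and contradict the constraints that positive scalar curvature places on complete stable minimal surfaces. However, the proposal stops at exactly the two points where the real work lies, and the mechanisms you sketch for closing the argument are not the ones that actually work. First, you correctly observe that there is no a priori local area bound for the $\Omega_k$, but you leave unresolved what the limit object then is. In fact the absence of an area bound is not an accident to be circumvented: it is forced by the topology (a meridian disc of $N_k$ must have at least $I(N_l,N_k)=2^{k-l}$ components in $N_l$ meeting $N_0$, each carrying a definite amount of area by the Meeks--Yau monotonicity-type estimate), so the sequence can only converge to a minimal \emph{lamination} $\mathscr{L}$ with possibly infinitely many leaves, via curvature estimates (Schoen) and Colding--Minicozzi's lamination compactness. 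Your ``either the limit is a plane of the wrong topology or the area estimates fail'' dichotomy does not capture this; in particular, a complete stable minimal plane is \emph{not} by itself forbidden in a psc $3$-manifold (planes exist in $\mathbb{R}^3$ with $g_1$), so no contradiction follows from the topology of a single limit leaf.

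Second, the quantitative bridge you ask for (``translating the linking condition into an area lower bound'') is precisely Property $P$ (Theorems \ref{component} and \ref{PT}) combined with the extrinsic Cohn--Vossen inequality $\int_\Sigma \kappa\, dv\le 2\pi$ for complete noncompact stable minimal surfaces (Theorem \ref{proper}, via Fischer-Colbrie--Schoen and Cohn--Vossen). Together these show (Lemma \ref{trivial}) that beyond some stage $k_0$ no leaf of $\mathscr{L}$ can meet $\partial N_k$ in an essential circle, since otherwise that leaf would contain $I(N_1,N_{k_n})\to\infty$ components each of definite $\kappa$-mass, violating the $2\pi$ bound. The contradiction is then \emph{not} about the conformal type or incompressible tori: one shows $\mathscr{L}\cap\partial N_k(\epsilon)$ sits inside finitely many disjoint closed discs (Lemma \ref{covering}, which itself requires a nontrivial finiteness argument for the separating components $B^t_i$), picks a longitude $\theta$ of $N_k(\epsilon)$ disjoint from $\mathscr{L}$, and notes that for large $j$ the disc $\Omega_j$ avoids $\theta$ while still containing a meridian of $N_k(\epsilon)$ (Lemma \ref{one}) that must intersect $\theta$ with intersection number $\pm1$. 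None of these steps — the lamination limit, the component-counting via geometric index, the Cohn--Vossen bound, and the final longitude/meridian intersection — appears in your outline, so the proposal as written has genuine gaps at its core rather than merely technical ones.
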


Further, we use the metric deformation in \cite{Kazdan} to have that: 

\begin{corollary}\label{C}
No contractible genus one $3$-manifold admits a complete metric with non-negative scalar curvature. 
\end{corollary}

\subsection{Topological properties of genus one $3$-manifolds} In this subsection, we use  the example of  Whitehead to illustrate  the topological properties of genus one $3$-manifolds. Based on the above construction, we use the symmetry of the Whitehead link to have that (see Section 2.3)
\begin{itemize}[leftmargin=15pt]
\item[(A)] the component $N_k:=\mathbb{S}^3\setminus T_k$ is a solid torus for each $k$;
\item[(B)] the Whitehead manifold $Wh$ is an increasing union of solid tori $\{N_k\}_k$;
\item[(C)] $N_{k-1}$ is located in $N_{k}$ in the same way as $T_1$ lies in $T_0$ (see Figure \ref{Fig3}). 
\end{itemize}

\begin{figure}[H]
\centering{
\def\svgwidth{\columnwidth}
{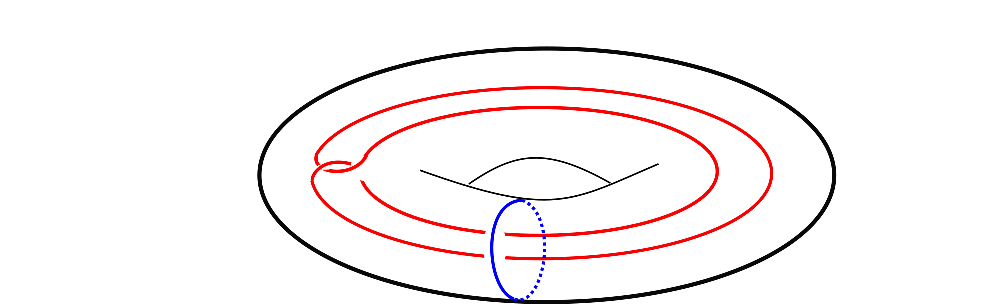}
\caption{}
\label{Fig3}
}
\end{figure}

The items (B) and (C) give an alternative construction of the Whitehead manifold. Variations on the construction such as changing the position of $N_{k-1}$ in $N_k$ at each step $k$ yield so-called genus one  $3$-manifolds, introduced by \cite{Mc}, referring to the fact that they are constructed by solid tori. 

\vspace{2mm}

In this article, we observe (see Theorem \ref{component}) that for any properly embedded plane $\Sigma\subset Wh$,  $k>0$ and any closed curve $\gamma\subset \partial N_{k}\cap \Sigma$, it holds one of the following:
\begin{enumerate}[leftmargin=15pt]
\item $\gamma$ is contractible in $\partial N_{k}$;
\item for $l<k$, $D\cap \text{Int}~N_{l}$ has at least $2^{k-l}$ components intersecting $N_{0}$, \end{enumerate} where $\gamma$ bounds a unique disc $D\subset \Sigma$ and $\{N_{k}\}_{k}$ is  defined in the item (A).

Such a topological property is called \emph{Property P}. 
Further, we show that a contractible genus one $3$-manifold satisfies Property P (see Theorem \ref{PT}). 

\vspace{1mm}

Property $P$ has a delicate connection with minimal surfaces. This connection plays a crucial role in the proof of Theorem \ref{B}.

\subsection{The effect of positive scalar curvature on minimal surfaces and minimal laminations}  We will use the following notations: 

\begin{itemize}[leftmargin=15pt]
\item  we let $(M, g)$ be a complete contractible genus one $3$-manifold, where $M$ is an increasing union of solid tori $\{N_k\}^\infty_{k=0}$.
\item  we define $\mathscr{L}=\cup_{t\in\Lambda }L_t$ to be a lamination in $(M, g)$ (i.e. a disjoint union of some embedded surfaces) whose each leaf is a complete (non-compact) stable minimal surface. (It may have infinitely many components.)
\end{itemize}

Minimal surfaces in a $3$-manifold with positive scalar curvature have been extensively studied in \cite{SY}, \cite{FS}, \cite{F} and many other works.
\begin{prop}
If $(M, g)$ has positive scalar curvature,  then one has that \begin{itemize}[leftmargin=17pt]
\item[(1)] each $L_t$ is homeomorphic to $\mathbb{R}^2$;
\item[(2)] $\int_{L_t}k(x)dv\leq 2\pi$ (see Theorem \ref{proper}),

\end{itemize}where $\kappa$ is the scalar curvature of $(M, g)$ and $dv$ is the volume form of the induced metric on $L_t$. 
\end{prop}

The item (1) was firstly pointed out by Schoen and Yau \cite{SY}. The item (2) is the so-called \emph{extrinsic Cohn-Vossen's inequality}.  The original Cohn-Vossen's inequality \cite{CV} is the same for a complete surface with positive curvature, where $\kappa$ is replaced by the sectional curvature. 
\vspace{1mm}

All these results reveal a classical fact: the positivity of scalar curvature  constrains the topological and geometric structures of stable minimal surfaces.  This fact also appears in the works of Schoen and Yau \cite{SY3, SY1, SY} as well as Gromov-Lawson's \cite{GL} and various other authors. 
\vspace{2mm}

We now explain the interface between the topological properties (especially Property P) and the scalar curvature. First, the extrinsic Cohn-Vossen's inequality and Property P imply that 

\vspace{1mm}

\begin{lemma}\label{trivial} If $(M, g)$ has positive scalar curvature, then there is an integer $k_0$ so that for each $k\geq k_0$ and $t\in \Lambda$, any circle in $L_t\cap \partial N_k$ is contractible in $\partial N_k$. \end{lemma}

\vspace{1mm}

Then, we show the following theorem: 

\begin{theorem}\label{D} If $(M, g)$ has positive scalar curvature, there is an integer $k_0>0$ so that for $k\geq k_0$, $\mathscr{L}\cap \p N_k(\epsilon)$ is contained in a disjoint union of \textbf{finitely many} discs in $\p N_k(\epsilon)$, where $N_{k}(\epsilon):=N_{k}\setminus N_{\epsilon}(\partial N_{k})$ and $N_{\epsilon}(\partial N_{k})$ is some tubular neighborhood of $\partial N_{k}$.\end{theorem}

In the case when $\mathscr{L}$ has finitely many components, Theorem \ref{D} can be easily deduced from {Lemma} \ref{trivial}. Generally, we require a finiteness lemma to complete the proof. 

\subsection{Rough idea of the proof for the Whitehead case}  The proof of Theorem \ref{B} is the same as Theorem \ref{A}. In the following, we just explain the idea for the Whitehead manifold.

We argue by contradiction. Suppose  the contrary that $(Wh, g)$ is complete with positive scalar curvature. It is an increasing union of solid tori $\{N_k\}_k$ (see Sections 1.1 and 2.3). 

Choose a meridian $\gamma_k\subset \partial N_k$ of $N_k$ (see Definition \ref{mer}). Roughly speaking, it is spanned by an embedded stable minimal disc $\Omega_{k}$ in $N_{k}$. Its existence is ensured by the work of Meeks and Yau (see \cite{YM, YM1} or Theorem 6.28 of \cite{CM1}), when $\partial N_{k}$ is mean convex.

\vspace{1mm}

 The sequence $\{\Omega_k\}_k$ sub-converges towards a minimal lamination $\mathscr{L}:=\cup_{t\in \Lambda}L_t$ (a disjoint union of embedded minimal surfaces) (see Theorem \ref{limits}). It may have infinitely many components.  Each leaf is a complete (non-compact) stable minimal surface. 
 
 \vspace{1mm}
 
 For our convenience, we assume that the sequence $\{\Omega_k\}$ converges to $\mathscr{L}$. 
 
 \vspace{1mm}
 
 Theorem \ref{D} shows that $\mathscr{L}\cap \partial N_{k_0}(\epsilon)$ is contained in a disjoint union of finitely many discs $\{D_j\}$ in $\partial N_{k_0}(\epsilon)$, where $N_{k_0}(\epsilon)$ is defined in Theorem \ref{D}.  Then, we can find a longitude $\theta\subset \partial N_{k_0} (\epsilon) \setminus\amalg_j D_j$ of $N_{k_0}(\epsilon)$ ( i.e. an embedded circle in $\partial N_{k_0}(\epsilon)$ whose intersection number with any meridian is $\pm 1$). 
 
 The convergence of $\{\Omega_k\}$ implies that $\Omega_k$ is away from $\theta$ for $k$ large enough. However, each $\Omega_k$ contains a meridian $\alpha_k$ of $N_{k_0}(\epsilon)$ for $k>k_0$ (see Lemma \ref{one}),  which contradicts the fact that the intersection number of $\alpha_k$ and $\theta$ is $\pm 1$. 
 
 \vspace{2mm}

If $\partial N_{k}$ is not mean convex, we modify the metric in a small neighborhood of $\partial N_{k}$ so that for this new metric it becomes mean convex. Then $\Omega_{k}$ is minimal for this new metric and  for the original one away from the neighborhood of $\partial N_{k}$ (near $N_{k-1}$, for example), which is sufficient for our proof. 

\subsection{Main Difficulties in the proof}  In our proof, we use minimal surfaces to detect the topology of $3$-manifolds. However, we counter several difficulties about minimal surfaces.  

\subsubsection{Issue about convergence } For the sequence of minimal surfaces $\{\Omega_{k}\}$ as mentioned above,  there is no available uniform local area bound (see Section 5.2), which is necessary and crucial when considering the convergence of such a sequence in the classical sense. To overcome this obstacle, we then consider the convergence towards a lamination (see \cite{CM}). 

However, this approach may lead to another problem that the limit may have infinitely many components. We will explain how to deal with it later.  

\subsubsection{Difficulty about the properness of minimal surfaces} In the proof, we always require to deform $\partial N_k$ so that it intersects some leaf $L_t$ transversally, where $L_t$ and $N_k$ are defined in Section 1.2. The existence of such a deformation is fully determined by the properness of $L_t$. 

In the article, we obverse that the \emph{extrinsic Cohn-Vossen's inequality} shows that $L_t$ is properly embedded for each $t\in \Lambda$ (see Theorem \ref{proper}).  Thus, the required deformation always exists.

\subsubsection{Issue on Finiteness} In the proof of Theorem \ref{D}, the lamination $\mathscr{L}$ may have infinitely many components. We now explain the idea of our approach. 

Let $\mathscr{L}$, $(M, g)$ and $\{N_k\}$ be assumed as in Section 1.2. Assume that $(M, g)$ has positive scalar curvature. Lemma \ref{trivial} gives an integer $k_0$ such that for $k\geq k_0$ and $t\in \Lambda$, any circle in $\partial N_k\cap L_t$ is contractible in $\partial N_k$.  
\vspace{1mm}

We will work on an open solid torus $\text{Int}~ N_k$ for a fixed $k\geq k_0$. Let $\Sigma^i_t$ be one component of $L_t\cap \text{Int} ~N_k$, where $i\in I_t$. Each component of the boundary $\partial \Sigma^i_t$ is a null-homotopic circle in $\partial N_k$. 
We show that (see Lemma \ref{existence})

\begin{itemize}[leftmargin=10pt]
\item $\Sigma^i_t$ cuts $\text{Int}~ N_k$ into two components; 
\item there is a unique component $B^{i}_t$ satisfying  that $\pi_1(B^i_j)\rightarrow \pi_1(N_k)$ is trivial. 
\end{itemize} Further, there is a partially ordered relation on $\{B^i_t\}_{t\in \Lambda, i\in I_t}$ induced by the inclusion (see Lemma \ref{sep}).

The set $(\{B^i_t\}, \subset)$ may be an infinite set. However, roughly speaking, we use the properties of minimal laminations to show that the set $S$ of the maximal elements in $(\{B^i_t\}, \subset)$ is finite (see the finiteness lemma in Section 6.3). 

Because of the maximality, it is sufficient for the proof to study the minimal surfaces associated with an element in $S$.  The resting argument is the same as the finite case.

\subsection{Organization of the paper} 
  
When applying the minimal hypersuface technique to a contractible $3$-manifold, we must combine its topological property and its geometric structure. The plan of this paper is as follows. 
  
 For the first part of the paper, we describe the topological properties of contractible $3$-manifolds. In Section 2, we begin with related background on knot theory and $3$-manifolds, such as the geometric index and genus one $3$-manifolds.  In Section 3, we focus on embedded discs in the Whitehead manifold. An interesting fact is that the behavior of these embedded discs is related to the geometric index. Their relation is clarified by  Theorem \ref{component}. Based on this relation, we introduce a new topological property, called Property $P$ and we show that any contractible genus one $3$-manifold satisfies this property in Section 4.  

In the second part, we treat minimal surfaces and related questions. In Section 5, we first describe the construction of embedded discs $\{\Omega_k\}$ in a complete genus one $3$-manifold and then study the properties of the limit surfaces. In Section 6, we explain the interplay between Property P and the positivity of scalar curvature.

For the third part, we give the proofs of the main theorems . In Section 7, we first use Theorem \ref{D} to complete  proof of Theorem \ref{B}. Then, we apply the metric deformation of Kazdan and give a complete proof of Corollary \ref{C}.

\section{Background}

\subsection{Simply connected at infinity}

\begin{definition}A topological space $M$ is  \emph{simply connected at infinity} if for any compact set $K\subset M$, there exists a  compact set $K'$ containing $K$ so that the induced map $\pi_{1}(M\setminus K')\rightarrow \pi_{1}(M\setminus K)$ is trivial. \end{definition}

For example, the Whitehead manifold is not simply-connected at infinity (see \cite{Wh}). 

The Poincar\'e conjecture \cite{P1, P2, P3} tells that any contractible $3$-manifold is irreducible (i.e. any embedded $2$-sphere in the $3$-manifold bounds a closed $3$-ball).   A classical theorem \cite{HP, S} implies that the only contractible and simply-connected at infinity 3-manifold is $\mathbb{R}^{3}$. As a consequence, the Whitehead manifold is not homeomorphic to $\mathbb{R}^{3}$.

\subsection{Knots basics}
In this part, we focus on Knot theory in a solid torus. We begin with a meridian  of the solid torus. In the following, we suppose that $N$ is a closed solid torus. 
\begin{definition}\label{mer} 
An embedded circle $\gamma \subset \partial N$ is called a meridian of the closed solid torus $N$ if $\gamma$ is not contractible in $\partial N$ but is nullhomotopic in $N$.

 An embedded disc $(D, \partial D)\subset (N, \partial N)$ is a meridian disc if $\partial D$ is a meridian of $N$.

\end{definition}

\begin{remark}
The kernel of the induced map $\pi_{1}(\partial N)\rightarrow \pi_{1}(N)$ is isomorphic to $\mathbb{Z}$. Each meridian $\gamma$ of $N$ belongs to the kernel. Since $\gamma$ is an embedded circle, it is a generator of the kernel. Therefore, the meridians of $N$ are unique, up to homotopy and orientation. 

In addition, there is an embedded circle $\theta\subset \partial N$ so that $[\gamma]$ and $[\theta]$ generate $H_{1}(\partial N)$, where $\gamma$ is a meridian of $N$.

\end{remark}

\begin{definition}\label{long}

An embedded circle $\theta \subset \partial N$ is called a longitude  of $N$ if, for any meridian $\gamma$, $[\gamma]$ and $[\theta]$ generate $H_{1}(\partial N, \mathbb{Z})$.
 
A knot $K\subset \text{Int} ~N$ is called a trivial knot in the solid torus $N$ if there exists  an embedded disc $D\subset \text{Int}~N$ with boundary $K$.
\end{definition}
Remark that, as a knot, any longitude is isotopic to the core of $N$ (i.e. a deformation retraction of $N$) in $N$.

\begin{definition} Let $N'\subset N$ be two solid tori and $K\subset N'$ be  a core of $N'$ (i.e. a deformation retraction of $N'$). The core $K$ is an embedded  closed curve  in $N$.  

The solid torus $N'$ is homotopically trivial in $N$ if the core $K$ is contractible in $N$. The solid torus $N'$ is unknotted in $N$ if the core $K$ bounds an embedded disc in $N$. 
\end{definition}

Remark that any unknotted solid torus in $N$ is homotopically trivial in $N$. Conversely, it may not  hold. For example, in Whitehead manifold, $N_{k-1}$ is homotopically trivial in $N_k$ but is unknotted in $N_k$ (see Figure \ref{Fig3}).

\vspace{2mm}

In the following, let us consider two closed solid tori $N'$ and $N$ satisfying that (1)$N' \subset \text{Int}~N$; (2) $N'$ is homotopically trivial in $N$. We use the Mayer-Vietoris sequence to show 
\begin{lemma}\label{MV}If the closed solid torus $N'\subset \text{Int}~N$ is homotopically trivial  in the closed solid torus $N$, then $H_{1}(\overline{N\setminus N'})=\mathbb{Z}^{2}$ and the kernel of the induced map $H_{1}(\partial N')\rightarrow H_{1}(\overline{N\setminus N'})$ is generated by a longitude of $N'$. 
\end{lemma}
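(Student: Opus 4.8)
The plan is to run the Mayer--Vietoris sequence for the decomposition $N = W \cup N'$, where $W := \overline{N\setminus N'}$ and, after thickening $N'$ to an open collar neighbourhood, $W\cap N'$ deformation retracts onto the torus $\partial N'$. Since $N$ and $N'$ are solid tori and $\partial N'$ is a $2$-torus, I would substitute $\tilde H_{*}(N)=\tilde H_{*}(N')=(\mathbb{Z}\text{ in degree }1,\,0\text{ otherwise})$, together with $\tilde H_{1}(\partial N')=\mathbb{Z}^{2}$ and $\tilde H_{2}(\partial N')=\mathbb{Z}$. The hypothesis that $N'$ is homotopically trivial in $N$ enters in exactly one place: the inclusion $j\colon N'\hookrightarrow N$ induces the zero map $j_{*}=0$ on $H_{1}$.

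From the sequence I would isolate the short exact sequence
$$0 \to H_{1}(\partial N') \xrightarrow{\;\phi\;} H_{1}(W)\oplus H_{1}(N') \xrightarrow{\;\psi\;} H_{1}(N) \to 0,$$
where left-exactness uses $\tilde H_{2}(N)=0$ and right-exactness uses $\tilde H_{0}(\partial N')=0$; here $\phi(\alpha)=\big((i')_{*}\alpha,(j')_{*}\alpha\big)$ and $\psi(x,y)=i_{*}(x)-j_{*}(y)$, with $i'\colon\partial N'\hookrightarrow W$, $j'\colon\partial N'\hookrightarrow N'$ and $i\colon W\hookrightarrow N$. Since the right-hand group is free the sequence splits, so $H_{1}(W)\oplus\mathbb{Z}\cong\mathbb{Z}^{3}$, forcing the torsion to vanish and giving $H_{1}(W)\cong\mathbb{Z}^{2}$. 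Moreover $j_{*}=0$ makes $\psi=i_{*}\circ\mathrm{pr}_{1}$, so surjectivity of $\psi$ forces $i_{*}\colon H_{1}(W)\to H_{1}(N)=\mathbb{Z}$ to be onto; hence $\ker(i_{*})\cong\mathbb{Z}$ is a direct summand of $H_{1}(W)$, generated by some primitive class $e$.

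For the kernel statement, write $H_{1}(\partial N')=\langle\mu',\lambda'\rangle$ with $\mu'$ a meridian and $\lambda'$ a longitude. The map $(j')_{*}$ kills $\mu'$ (it bounds the meridian disc) and sends $\lambda'$ to a generator of $H_{1}(N')$, which I normalise to $+1$. Evaluating $\phi$ on this basis gives $\phi(\mu')=(a,0)$ and $\phi(\lambda')=(b,1)$, and since both lie in $\operatorname{im}\phi=\ker\psi$ we get $a,b\in\ker(i_{*})=\mathbb{Z}\langle e\rangle$, say $a=pe$ and $b=qe$. In the basis $\{e\}\times\{1\}$ of $\ker\psi=\mathbb{Z}e\oplus\mathbb{Z}$ the vectors $(p,0)$ and $(q,1)$ must form a basis, because $\phi$ is an isomorphism onto $\ker\psi$; their determinant is $p$, so $p=\pm 1$. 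Thus $(i')_{*}\mu'=\pm e\neq 0$, and
$$\ker\big((i')_{*}\big)=\{\,m\mu'+n\lambda' : mp+nq=0\,\}=\langle\,\lambda'-q\mu'\,\rangle.$$
Since $\lambda'-q\mu'$ together with $\mu'$ is a basis of $H_{1}(\partial N')$, it is a longitude of $N'$, which is the assertion.

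The only genuinely delicate point I anticipate is the bookkeeping in the final step: one must respect the Mayer--Vietoris sign conventions for $\phi$ and $\psi$ and verify that $\operatorname{im}\phi$ really fills out $\ker\psi$ in the chosen splitting, so as to conclude $p=\pm1$ --- equivalently, that the \emph{meridian} maps to a primitive class in $H_{1}(W)$. Everything else is routine substitution. It is worth emphasising that the homotopically-trivial hypothesis is used solely to set $j_{*}=0$, and it is precisely this vanishing that makes the longitude, rather than the meridian, the class that dies in $\overline{N\setminus N'}$.
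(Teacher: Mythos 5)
Your proof is correct and follows essentially the same route as the paper: the Mayer--Vietoris sequence for $N=\overline{N\setminus N'}\cup_{\partial N'}N'$, with the homotopical triviality of $N'$ entering only through $j_{*}=0$ on $H_{1}$. In fact your bookkeeping is more careful than the paper's (which asserts "by computations" that the image of $H_{1}(\partial N')\to H_{1}(\overline{N\setminus N'})$ is generated by the meridian), since you actually verify via the determinant argument that the meridian maps to a primitive class.
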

\begin{proof} The Mayer-Vietoris sequence gives the long exact sequence: 
\begin{equation*}
H_{2}(N)\rightarrow H_{1}(\partial N')\rightarrow H_{1}({N'})\oplus H_{1}(\overline{N\setminus N'})\rightarrow H_{1}(N)\rightarrow \hat{H}_{0}(\partial N').
\end{equation*}
Since $H_{2}(N)$ and $\hat{H}_{0}(\partial N')$ are both trivial, then $H_{1}(\overline{N\setminus N'})\cong \mathbb{Z}^{2}$ and it is generated by any meridian $\gamma'$ of $N'$ and a longitude of $N$.

In addition, the image of the map $H_{1}(\partial N')\rightarrow H_{1}(\overline{N\setminus N'})$ is a subgroup of rank one which is generated by the meridian $\gamma'$ of $N'$. 
The kernel of $H_{1}(\partial N')\rightarrow H_{1}(\overline{N\setminus N'})$ is also of rank one and generated by $[\theta']$, where the circle $\theta'\subset \partial N'$ is an embedded cirlce. 
 Therefore, $H_{1}(\partial N')$ is generated by $[\gamma']$ and $[\theta']$. By Definition \ref{long}, $\theta'$ is a longitude of $N'$. That is to say, the longitude $\theta'$ is a generator of the kernel of $H_{1}(\partial N')\rightarrow H_{1}(\overline{N\setminus N'})$.
\end{proof}

\subsection{Properties of the Whitehead manifold}
As in Introduction, the Whitehead manifold is defined as $Wh:=\mathbb{S}^{3}\setminus \cap_{k=0}^{\infty} T_{k}$. Let $N_{k}$ be the complement of $T_{k}$, an closed solid torus. 
\begin{remark}\label{setup in Wh}
The Whitehead manifold has the following properties:

\begin{enumerate}[leftmargin=15pt]
\item $Wh$ can be written as an increasing union of $\{N_{k}\}$. 
\item The core $K_{k}$ of $N_{k}$ (i.e. a deformation retraction of $N_k$) is a non-trivial knot in the solid torus $N_{k+1}$. Furthermore, the link $K_{k}\amalg\gamma_{k+1}$ is a Whitehead link for each meridian $\gamma_{k+1}$ of $N_{k+1}$. This is a consequence of the symmetry of the Whitehead link.
\item Each $K_{k}$ is unknotted in $\mathbb{S}^{3}$. For each $j>k$,  $K_{k}$ is nullhomotopic in $N_{j}$ but is a non-trivial knot in $N_{j}$.
 \end{enumerate}
 \end{remark}

\subsection{Geometric Index}
\begin{definition}\cite{Sch}\label{index} If $N'\subset\text{Int} ~N$ are two solid tori, the \emph{geometric index} of $N'$ in $N$, $I(N', N)$, is the minimal number of points of the intersection of the core of $N'$ (i.e. a deformation retraction of $N'$) with a meridian disk of $N$.
\end{definition}
For example, in $Wh$, the geometric index $I(N_{k}, N_{k+1})=2$ for each $k$, where $N_{k}$ is illustrated as in Sections 1.1 and 2.3. 

See \cite{Sch} for the following results about the geometric index.
\begin{itemize}[leftmargin=15pt]

 \item Let $N_{0}$, $N_{1}$, and $N_{2}$ be solid tori so that $N_{0}\subset \text{Int}~N_{1}$ and $N_{1}\subset \text{Int} ~N_{2}$. Then $I(N_{0}, N_{2})=I(N_{0}, N_{1})I(N_{1}, N_{2})$. 
\item If $N_{0}$ and $N_{1}$ are unknotted solid tori in $\mathbb{S}^{3}$ with $N_{0}\subset \text{Int}~ N_{1}$, and if $N_{0}$ is homotopically trivial in $N_{1}$, then $I(N_{0}, N_{1})$ is even.
\end{itemize}

 In the following, the loop lemma will be used several times. 

\begin{lemma}\label{loop} \textnormal{(See [Theorem 3.1, Page 54] in \cite{HA})} Let $M$ be a $3$-manifold with boundary $\partial M$, not necessarily compact or orientable. If there is a map $f: (D^{2}, \partial D^{2})\rightarrow (M, \partial M)$ with the property that $f|_{\partial D^{2}}$ is not nullhomotopic in $\partial M$, then there is an embedding with the same property. 
\end{lemma}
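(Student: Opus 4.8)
The plan is to prove this by Papakyriakopoulos's \emph{tower construction}, the classical route to Dehn's lemma and the loop theorem. First I would observe that, since $f|_{\partial D^{2}}$ bounds the singular disc $f(D^{2})$, the boundary curve is automatically null-homotopic in $M$; thus it represents a nontrivial element of $\ker(\pi_{1}(\partial M)\to\pi_{1}(M))$, and the content of the lemma is to upgrade the singular spanning disc to an embedded one without killing this class. After putting $f$ in general position, I would replace $M$ by a compact regular neighborhood $N_{0}$ of $f(D^{2})$ together with a collar of the relevant part of $\partial M$; the map $f$ factors through $N_{0}$, and any embedded disc found in $N_{0}$ with boundary essential in $\partial N_{0}$ maps into $M$ with the required property, so it suffices to work inside the compact piece $N_{0}$.

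Next I would build the tower $N_{0}\xleftarrow{p_{1}}\tilde N_{0}\supset N_{1}\xleftarrow{p_{2}}\tilde N_{1}\supset N_{2}\supset\cdots$ as follows. At stage $i$, if $H_{1}(N_{i};\mathbb{Z}/2)\neq 0$ there is a connected double cover $p_{i+1}:\tilde N_{i}\to N_{i}$; because $D^{2}$ is simply connected the map $f_{i}:D^{2}\to N_{i}$ lifts to $\tilde f_{i}:D^{2}\to\tilde N_{i}$, and I set $N_{i+1}$ to be a regular neighborhood of $\tilde f_{i}(D^{2})$ in $\tilde N_{i}$. I would then argue that the tower is finite: each $N_{i}$ is a compact connected $3$-manifold with boundary, and a suitable complexity invariant (for instance the first $\mathbb{Z}/2$-Betti number of $N_{i}$) strictly decreases under the passage to a connected double cover followed by restriction to a regular neighborhood of the lifted image. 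Since this quantity is a nonnegative integer, the process must halt at some $N_{n}$ with $H_{1}(N_{n};\mathbb{Z}/2)=0$, i.e. $N_{n}$ admits no connected double cover.

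The base of the tower is where an embedded disc first appears. Working in $N_{n}$, I would put $\tilde f_{n}$ in general position so that its singular set is a union of double arcs and circles, and then remove these by cutting and pasting along innermost components. The hypothesis $H_{1}(N_{n};\mathbb{Z}/2)=0$ is exactly what guarantees that these surgeries can be organized so as to produce an \emph{embedded} disc whose boundary is untouched, hence still not null-homotopic in $\partial N_{n}$. Finally I would descend the tower: given an embedded disc $D_{i+1}\hookrightarrow N_{i+1}\subset\tilde N_{i}$, its projection $p_{i+1}(D_{i+1})$ has self-intersections occurring in pairs exchanged by the nontrivial deck transformation, forming double arcs and circles; resolving these by innermost cut-and-paste yields an embedded disc $D_{i}\hookrightarrow N_{i}$ carrying the same essential boundary class in $\partial N_{i}$. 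Iterating down to $i=0$ produces the desired embedded disc in $N_{0}\subset M$, and hence in $(M,\partial M)$.

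The main obstacle is twofold and lives at the two ends of the descent: proving that the tower terminates (controlling the complexity invariant under double covers and regular-neighborhood restriction) and, more delicately, carrying out the cut-and-paste both at the top and at each descending step while verifying that some component of the resolved surface is still a disc whose boundary retains the essential class in $\partial N_{i}$. Keeping track of which piece carries the nontrivial boundary through every surgery is the heart of the argument; the covering-space bookkeeping supplied by the tower is precisely the device that makes this control possible.
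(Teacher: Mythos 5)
The paper does not prove this lemma at all: it is the classical Loop Theorem of Papakyriakopoulos--Stallings, quoted verbatim from the cited reference (Hatcher's notes, Theorem 3.1), and used as a black box. Your proposal is therefore not an alternative to anything in the paper; it is an outline of the standard tower-construction proof that the cited source gives, and at the level of strategy it is the right one: reduce to a compact regular neighborhood, climb a tower of connected double covers until $H_{1}(\,\cdot\,;\mathbb{Z}/2)$ vanishes, desingularize at the top, and descend by innermost cut-and-paste.

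Two concrete points in your sketch would not survive being written out. First, the termination argument: there is no reason the first $\mathbb{Z}/2$-Betti number of $N_{i}$ should decrease under passing to a connected double cover and then to a regular neighborhood of the lifted image (double covers generally have \emph{larger} $H_{1}$ than the base), and this is not how the classical proof works. The standard device is to put $f_{i}$ in general position and show that the singular set (the double curves in $D^{2}$) strictly decreases when $f_{i}$ is lifted to a nontrivial connected double cover: if it did not, $f_{i}(D^{2})$ would lift homeomorphically, so its preimage in $\tilde N_{i}$ would be two disjoint copies, contradicting the fact that $\tilde N_{i}$ deformation retracts onto that preimage and is connected. Second, your reduction step asserts that an embedded disc with boundary essential in $\partial N_{0}$ is automatically essential in $\partial M$; the inclusion of the subsurface $\partial N_{0}\cap\partial M$ into $\partial M$ need not be $\pi_{1}$-injective, and this is precisely why the theorem is usually proved in the stronger form involving a normal subgroup $N\trianglelefteq\pi_{1}(\partial M)$ (one requires the boundary class to stay outside the preimage of $N$ at every level), with the statement above recovered by taking $N=1$. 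Both issues are repairable -- they are exactly what the cited reference handles -- but as stated your termination invariant is wrong and your bookkeeping of the essential boundary class through the reduction is incomplete.
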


\begin{corollary}\label{unknotted} Let $N'$ and $N$ be two closed solid tori with $N'\subset \text{Int}~N$. If the core $K$ of $N'$ is a trivial knot in $N$, then there is a meridian disc $(D, \partial D)\subset (N \setminus N', \partial N)$. Moreover, $I(N', N)$ is equal to zero. 
\end{corollary}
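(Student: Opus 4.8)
The plan is to reduce both assertions to the single claim that $N$ possesses a meridian disc disjoint from $N'$. Indeed, suppose $(D,\partial D)\subset(\overline{N\setminus N'},\partial N)$ is an embedded disc whose boundary is a meridian of $N$ (Definition \ref{mer}). This is precisely the first conclusion, and since such a $D$ misses $N'$ it misses the core $K\subset N'$; as the geometric index of Definition \ref{index} is the \emph{minimal} number of intersection points of $K$ with a meridian disc of $N$, we get $I(N',N)=0$ at once. So everything rests on producing a single meridian disc of $N$ in the complement of $N'$.

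To build it I would exploit the hypothesis that $K$ is a \emph{trivial} knot in $N$ in the strong sense of Definition \ref{long}: $K$ bounds an embedded disc $D_{0}\subset\text{Int}~N$. A regular neighbourhood of $D_{0}$ is a $3$-ball contained in $\text{Int}~N$ with $K$ in its interior, so inside this ball $K$ is an unknot and may be shrunk to an arbitrarily small round circle. Since $N'$ is a tubular neighbourhood of its core $K$, uniqueness of tubular neighbourhoods together with this shrinking yields an ambient isotopy $\Phi$ of $N$, supported in $\text{Int}~N$ and hence fixing $\partial N$ pointwise, that carries $N'$ into a tiny ball $B'\subset\text{Int}~N$.

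Now identify $N\cong\mathbb{S}^{1}\times D^{2}$ and use the projection $p\colon\mathbb{S}^{1}\times D^{2}\to\mathbb{S}^{1}$. Because $B'$ is small it lies in a product slab $(\theta_{0}-\varepsilon,\theta_{0}+\varepsilon)\times D^{2}$; choosing any $\theta_{1}$ outside this interval, the disc $\{\theta_{1}\}\times D^{2}$ is a meridian disc of $N$ disjoint from $B'\supseteq\Phi(N')$. Pulling it back by $\Phi^{-1}$, which fixes $\partial N$ and therefore sends meridians to meridians, produces an embedded meridian disc of $N$ lying in $\overline{N\setminus N'}$, as required.

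The delicate point — the true content of the corollary — is this localisation step: upgrading ``$K$ bounds a disc'' to ``$N'$ sits inside a tiny ball contained in a single meridional slab.'' A naive product disc need not avoid $N'$, since the $3$-ball furnished by $D_{0}$ may wind all the way around the $\mathbb{S}^{1}$-factor; the resolution is to shrink the unknot $K$ itself (not merely that ball) into a tiny round ball, using that every unknot in a ball is ambient isotopic to the standard circle, and then invoke uniqueness of tubular neighbourhoods for $N'$. Alternatively one could argue homotopically: the same localisation shows the meridian of $N$ is nullhomotopic in $\overline{N\setminus N'}$, so the Loop Lemma (Lemma \ref{loop}) produces an embedded essential disc, and one checks that an essential curve on $\partial N$ bounding a disc in $N$ must be a meridian; the only care needed is to arrange that the disc furnished by the Loop Lemma has its boundary on $\partial N$ rather than on $\partial N'$.
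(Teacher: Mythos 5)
Your argument is correct, but it takes a genuinely different route from the paper. The paper never isotopes anything: it observes that the inclusion $N\setminus D_{0}\hookrightarrow N$ induces an isomorphism on $\pi_{1}$ (removing a regular neighbourhood of the spanning disc $D_{0}$, which is a ball, does not change the fundamental group), so a meridian $\gamma$ of $N$ is already nullhomotopic in $N\setminus D_{0}\subset N\setminus K\simeq N\setminus N'$; it then applies the Loop Lemma (Lemma \ref{loop}) to the pair $(N\setminus N',\partial N)$ to produce an embedded disc with boundary essential in $\partial N$, which is a meridian disc by Definition \ref{mer} since its boundary bounds in $N$. This is exactly the ``alternative'' you sketch at the end, except that the nullhomotopy is obtained algebraically rather than from your localisation, and the worry you raise about the Loop Lemma returning a disc with boundary on $\partial N'$ does not arise: the lemma is applied with $\partial N$ as the designated boundary component, so the embedded disc it produces has boundary there by construction. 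Your primary route --- shrink $K$ inside a regular neighbourhood of $D_{0}$, invoke uniqueness of tubular neighbourhoods to drag $N'$ into a small ball in a single meridional slab, and pull back a product disc --- buys an explicit meridian disc without appealing to Dehn's Lemma, at the cost of the isotopy-extension and tubular-neighbourhood machinery; you correctly identify the localisation of $N'$ (not merely of the ball around $D_{0}$) as the point needing care, and your fix of shrinking the knot itself and then thinning $N'$ is sound. Both arguments are complete; the paper's is shorter given that the Loop Lemma is already a standing tool there.
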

 \begin{proof} Let $D_{0}\subset \text{Int} ~N$ be the embedded disc with boundary $K$ and $\gamma\subset \partial N$ a meridian of $N$. Van-Kampen's Theorem shows that $\pi_{1}(N\setminus D_{0})$ is  isomorphic to $\pi_{1}(N)$. Thus, $\gamma$ is nullhomotopic in $N\setminus D_{0}$. It is also contractible in $N\setminus K$. 
 
 The inclusion map $\pi_{1}(N\setminus N')\rightarrow \pi_{1}(N\setminus K)$  is an isomorphism. Therefore, $\gamma$ is nullhomotopic in $N\setminus N'$ but not contractible in $\partial N$. Lemma \ref{loop} allows us to find an embedded disc $(D, \partial D)\subset (N\setminus N', \partial N)$. The boundary $\partial D$ is a meridian. The disc $D$ is the required candidate in the assertion. 
 
By Definition \ref{index}, $I(N', N)$ is equal to zero.     
 \end{proof}

In the following, let us consider two solid tori $N'\subset \text{Int}~N$    satisfying 1) $N'$ is homotopically trivial in $N$; 2) the geometric index $I(N', N)>0$.

\begin{lemma}\label{injective} Suppose that the closed solid torus $N'\subset \text{Int} ~N$ is homotopically trivial in the closed solid torus $N$. If $I(N', N)>0$, then the two induced maps $i_{1}:\pi_{1}(\partial N)\rightarrow \pi_{1}({N\setminus N'})$ and $i_{2}:\pi_{1}(\partial N')\rightarrow \pi_{1}(\overline{N\setminus N'})$ are both injective. 
\end{lemma}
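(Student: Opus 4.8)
The plan is to argue by contradiction using the Loop Lemma (Lemma \ref{loop}) together with the geometric hypothesis $I(N',N)>0$. Write $W:=\overline{N\setminus N'}$, a compact $3$-manifold whose boundary is the disjoint union of the two tori $\partial N$ and $\partial N'$; since $N\setminus N'$ is homotopy equivalent to $W$ (it is $W$ with an open collar of $\partial N'$ removed), we have $\pi_{1}(N\setminus N')\cong\pi_{1}(W)$, so it suffices to treat both maps as landing in $\pi_{1}(W)$. Suppose that one of $i_{1},i_{2}$ fails to be injective. Then there is a loop on $\partial N$ or on $\partial N'$ which is not nullhomotopic in its boundary torus, hence not nullhomotopic in $\partial W$, but which bounds a singular disc in $W$. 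Applying Lemma \ref{loop} to $(W,\partial W)$ produces an embedded disc $(D,\partial D)\subset (W,\partial W)$ whose boundary $\partial D$ is an essential simple closed curve on $\partial W$.

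The first point to address is that the Loop Lemma does not record on which boundary component $\partial D$ lies, so I would split into two cases and show that each forces $I(N',N)=0$. If $\partial D\subset\partial N$, then since $D\subset W\subset N$ the curve $\partial D$ bounds a disc in $N$ and is therefore nullhomotopic in $N$; being an essential embedded curve it generates the kernel of $\pi_{1}(\partial N)\to\pi_{1}(N)$ (cf. the Remark after Definition \ref{mer}), so $\partial D$ is a meridian of $N$ and $D$ is a meridian disc of $N$. As $D$ lies in $W$ it is disjoint from $N'$, in particular from the core of $N'$, so the geometric index is zero.

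If instead $\partial D\subset\partial N'$, then $[\partial D]=0$ in $H_{1}(W)$ because $\partial D$ bounds $D$. By Lemma \ref{MV} the kernel of $H_{1}(\partial N')\to H_{1}(W)$ is generated by a longitude $\theta'$ of $N'$; since an essential embedded curve on a torus carries a primitive homology class, $\partial D$ must be isotopic to $\theta'$. Here I would use that a longitude of $N'$ is isotopic in $N'$ to the core $K'$ (Remark after Definition \ref{long}): the trace of this isotopy is an embedded annulus $A\subset N'$ with $\partial A=\theta'\sqcup K'$, meeting $W$ only along $\theta'$, and gluing the properly embedded disc $D$ to $A$ along $\theta'$ yields an embedded disc in $\mathrm{Int}\,N$ bounded by $K'$. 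Thus $K'$ is a trivial knot in $N$, and Corollary \ref{unknotted} again gives $I(N',N)=0$.

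Both cases contradict the assumption $I(N',N)>0$, so $i_{1}$ and $i_{2}$ are injective. I expect the main obstacle to be bookkeeping rather than conceptual: one must check that the disc furnished by the Loop Lemma is properly embedded, so that its interior misses $\partial W$ and hence $N'$, and one must correctly identify the essential boundary curve as a meridian of $N$ or a longitude of $N'$. This identification, via the kernel description in Lemma \ref{MV} and the meridian/longitude dictionary, is exactly where the topological non-injectivity gets converted into the vanishing of the geometric index.
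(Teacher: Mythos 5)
Your proof is correct and follows essentially the same route as the paper: argue by contradiction, invoke the Loop Lemma to produce an embedded disc, identify its essential boundary as a meridian of $N$ (forcing $I(N',N)=0$ directly) or, via Lemma \ref{MV}, as a longitude of $N'$ isotopic to the core (forcing $I(N',N)=0$ through Corollary \ref{unknotted}). The only cosmetic difference is that you apply the Loop Lemma once to $(W,\partial W)$ and then split on which boundary torus carries $\partial D$, whereas the paper applies it separately to each boundary component; your explicit annulus construction in the longitude case is a slightly more careful rendering of the paper's ``the core is isotopic to $\theta$, hence a trivial knot'' step.
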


\begin{proof}
Suppose that either $i_{1}$ or $i_{2}$ is not injective. 

If $i_{1}$ is not injective, we use Lemma \ref{loop} to find an embedded disc $(D_{1}, \partial D_{1})\subset (N\setminus N', \partial N)$ where $\gamma_{1}:=\partial D_{1}$  is not nullhomotopic in $\partial N$. It is a meridian disc of $N$ with $D_{1}\cap N'=\emptyset$ (see Definition \ref{mer}). Therefore, the index $I(N', N)$ is equal to zero, which contradicts our assumption that $I(N', N)>0$.

If $i_{2}$ is not injective, Lemma \ref{loop} gives an embedded disc $( D_{2}, \partial D_{2})\subset (\overline{N\setminus N'}, \partial N')$. The embedded circle $\theta:=\partial D_{2}$ is not contractible in $\partial N'$. 

Since $\theta$ bounds an embedded disc $D_{2}\subset N\setminus N'$, it is a trivial knot in $N$. Furthermore, $[\theta]$  belongs to the kernel of the map $H_{1}(\partial N')\rightarrow H_{1}(\overline{N\setminus N'})$. From Lemma \ref{MV}, $\theta$ is a longitude of $N'$.

  Recall that as a knot, any longitude of $N'$ is isotopic to the core $K$ of $N'$. 
  Therefore, $K$ is isotopic to $\theta$ and  a trivial knot in $N$. 
  From Corollary \ref{unknotted}, $I(N', N)=0$. This is a contradiction and finishes the proof.   
\end{proof}

We now introduce some notations about the circles in a disc.
\begin{definition}\label{outmost}Let $C:=\{c_{i}\}_{i\in I}$ be a finite set of pairwise disjoint circles in the disc $D^{2}$ and $D_{i}\subset D^{2}$ the unique disc with boundary $c_{i}$. Consider the set $\{D_{i}\}_{i\in I}$ and define a partially ordered relation induced by the inclusion. $(\{D_{i}\}_{i\in I}, \subset)$ is a partially ordered set. For each maximal element $D_{j}$ in $(\{D_{i}\}_{i\in I}, \subset)$, its boundary $c_{j}$ is defined as a \emph{maximal circle} in $C$. For each minimal element $D_{j}$, its boundary $c_{j}$ is called a \emph{minimal circle} in $C$.
\end{definition}

\begin{lemma}\label{one}Suppose that the closed solid torus $N'\subset \text{Int}~N$ is homotopically trivial in the closed solid torus $N$. If $I(N', N)>0$, then any meridian disc $D$ of $N$ contains a meridian of $N'$.
\end{lemma}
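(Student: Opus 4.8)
The plan is to put $D$ in general position with respect to $\partial N'$ and to analyse the circles of $D\cap\partial N'$ by an innermost-circle argument, using the positivity of the geometric index to guarantee that an \emph{essential} intersection circle exists, and Lemma \ref{injective} to force such a circle to be a genuine meridian. Throughout, $K'$ denotes the core of $N'$; I will freely use that a solid torus is irreducible, so every embedded $2$-sphere in $N$ bounds a ball.

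First I would isotope $D$ so that it meets $\partial N'$ transversally. Since $\partial D\subset\partial N$ is disjoint from $\partial N'$, the intersection $D\cap\partial N'$ is a finite disjoint union of circles lying in $\mathrm{Int}\,D$, each bounding a unique subdisc of $D$ in the sense of Definition \ref{outmost}. The first claim is that at least one of these circles is essential in $\partial N'$. Indeed, if every circle of $D\cap\partial N'$ were inessential, then taking a circle innermost on $\partial N'$ and capping the corresponding subdisc of $D$ across the disc it bounds on $\partial N'$—which is legitimate because the resulting $2$-sphere bounds a ball in the irreducible manifold $N$—removes that circle by an isotopy of $D$ rel $\partial D$. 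Iterating, I would isotope $D$ to a meridian disc $\tilde D$ disjoint from $\partial N'$, hence (as $\partial\tilde D\subset\partial N$ and $\tilde D$ is connected) with $\tilde D\subset\overline{N\setminus N'}$. Such a disc misses $K'$ entirely, so $I(N',N)=0$ by Definition \ref{index}, contradicting the hypothesis $I(N',N)>0$.

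Next I would select $c$ to be a circle of $D\cap\partial N'$ that is essential in $\partial N'$ and innermost among the essential circles, meaning that the subdisc $D_c\subset D$ it bounds contains no further essential circle; all other circles of $D\cap\partial N'$ inside $D_c$ are then inessential. Let $R\subset D_c$ be the region adjacent to $c$; it is a planar surface lying entirely on one side of $\partial N'$, with boundary $c$ together with finitely many inessential circles $c_1,\dots,c_j$, each nullhomologous in both $N'$ and $\overline{N\setminus N'}$ since it bounds a disc on $\partial N'$. Reading the relation $[c]+\sum_l\pm[c_l]=0$ carried by $R$, and capping the $c_l$ with discs on $\partial N'$ to produce an explicit nullhomotopy, I conclude that $c$ is nullhomotopic in whichever of $N'$ or $\overline{N\setminus N'}$ contains $R$. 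If $R\subset\overline{N\setminus N'}$, this contradicts the injectivity of $\pi_1(\partial N')\to\pi_1(\overline{N\setminus N'})$ from Lemma \ref{injective}, since $c$ is essential in $\partial N'$. Hence $R\subset N'$, so $c$ is nullhomotopic in $N'$ while remaining essential in $\partial N'$; by Definition \ref{mer} this says exactly that $c$ is a meridian of $N'$, completing the proof.

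The step I expect to be the main obstacle is the middle one: controlling the trichotomy of intersection circles. It is immediate that an innermost circle bounding \emph{into} $N'$ and essential on $\partial N'$ is a meridian, but a priori the intersection pattern can be dominated by inessential circles and by essential non-meridians, and one must simultaneously (i) rule out the all-inessential configuration—this is exactly where $I(N',N)>0$ and irreducibility enter—and (ii) promote a merely \emph{essential} innermost circle to a genuine \emph{meridian}, which requires the capping and homology bookkeeping together with Lemma \ref{injective} to exclude the circle bounding on the complementary side. Making the capping precise—ensuring the auxiliary discs on $\partial N'$ can be chosen compatibly and that each reduction strictly lowers the number of intersection circles—is the routine but delicate part of the argument.
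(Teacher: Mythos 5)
Your proposal is correct and follows essentially the same route as the paper: both arguments locate a circle of $D\cap\partial N'$ that is essential in $\partial N'$ and innermost among such, cap the inessential circles of the adjacent region by discs on $\partial N'$, and use the injectivity of $\pi_{1}(\partial N')\rightarrow\pi_{1}(\overline{N\setminus N'})$ from Lemma \ref{injective} to rule out the region lying on the complementary side, so that the circle is nullhomotopic in $N'$ and hence a meridian. The only (harmless) divergence is in establishing that an essential circle exists at all: you perform innermost-disc surgeries using irreducibility to produce an embedded meridian disc of $N$ disjoint from $N'$ and contradict $I(N',N)>0$ directly from Definition \ref{index}, whereas the paper caps the maximal inessential circles and contradicts the injectivity of $\pi_{1}(\partial N)\rightarrow\pi_{1}(\overline{N\setminus N'})$; both reductions are valid.
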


\begin{proof}
 We may assume  that the closed meridian disc $D$ intersects $\partial N'$ transversally. The set $D\cap \partial N'$ is a disjoint union of circles $\{c_{i}\}_{i\in I}$.  Each $c_{i}$ bounds a unique closed disc $D_{i}\subset \text{Int}~ D$.
 
  Consider the set $C^{non}:=\{c_{i}~|~ c_{i}~ \text{is not contractible in }~ \partial N'\}$ and the set $C^{max}=\{c_{i}|~c_{i}$ is a maximal circle in $\{c_{i}\}_{i\in I}\}$.

We will show that $C^{non}$ is nonempty and a minimal circle in $C^{non}$ is a desired meridian.

 Suppose the contrary that $C^{non}$ is empty. Therefore, each $c_{i}\in C^{max}$ is contractible in $\partial N'$ and bounds a disc $D'_{i} \subset \partial N'$.
Consider the immersed disc $$\hat{D}:=(D\setminus\cup_{c_{i}\in C^{max}}D_{i})\cup(\cup_{c_{i}\in C^{max}}D'_{i})$$ with boundary $\gamma$ ($\gamma$ is a meridian of $N$). Then, $\hat{D}\cap \text{Int} ~N'=\emptyset$, which implies that  $\gamma$ is contractible in $\overline{N\setminus N'}$. 

However, Lemma \ref{injective} shows that $\pi_{1}(\partial N)\rightarrow \pi_{1}(\overline{N\setminus N'})$ is injective. Namely, $[\gamma]=1$ in $\pi_{1}(\partial N)$, which is in contradiction with  our hypothesis that $\gamma$ is a meridian. We can conclude that $C^{non}\neq\emptyset$.

\vspace{2mm}

In the following, we will prove that each minimal circle $c_{j}$ in $C^{non}$ is a required meridian. By Definition \ref{mer}, it is sufficient to show that $c_{j}$ is contractible in $N'$.  Our method is to construct an immersed disc $\hat{D}_{j}\subset {N'}$ with boundary $c_{j}$. 

Let us consider the set $C_{j}:=\{c_{i}~| c_{i}\subset \text{Int} ~D_{j}$ for $i\in I\}$ and  the subset $C_{j}^{max}$ of maximal circles in $C_{j}$. We now have two cases:  $C_{j}=\emptyset$ or $C_{j}\neq \emptyset$. 

\textbf{Case I:} If $C_{j}$ is empty, we consider the set $Z:=\text{Int}~D_{j}$ and define the disc $\hat{D}_{j}$ as $\text{Int}~D_{j}$. 

\textbf{Case II:} If $C_{j}$ is not empty, then $C^{max}_{j}$ is also nonempty. 
From the minimality of $c_{j}$ in $C^{non}$, each $c_{i}\in C^{max}_{j}$ is nullhomotopic in $\partial N'$ and bounds a disc $D''_{i}\subset\partial N'$. 

Define  the set $Z:=\text{Int} ~D_{j}\setminus \cup_{c_{i}\in C^{max}_{j}}{D}_{i}$ and the new disc $\hat{D}_{j}:=Z\cup(\cup_{c_{i}\in C^{max}_{j}} D''_{i})$ with boundary $c_{j}$. 

\vspace{2mm}

Let us explain why the disc $\hat{D}_{j}$ is contained in ${N'}$. In any case, $\partial N'$ cuts $N$ into two connected components, $\text{Int} ~N'$ and $N\setminus N'$. The set $Z$ is one of these components of $ \text{Int}~D_{j}\setminus \partial N'$. Therefore it must be contained in $\text{Int}~ N'$ or $N\setminus N'$.

If $Z$ is in $N\setminus N'$, then the disc $\hat{D}_{j}$  is contained in $\overline{N\setminus N'}$. Thus $c_{j}$ is contractible in $\overline{N\setminus N'}$. However, Lemma \ref{injective} gives that  the induced map $\pi_{1}(\partial N')\rightarrow\pi_{1}(\overline{N\setminus N'})$ is injective. That is to say, $c_{j}$ is homotopically trivial in $\partial N'$, which contradicts the choice of $c_{j}\in C^{non}$. We can conclude that $Z$ is contained in $\text{Int}~N'$.

Therefore, $\hat{D}_{j}$ is contained in ${N'}$. Namely, $c_{j}$ is null-homotopic in ${N'}$. From Definition \ref{mer}, we can conclude that $c_{j}\subset D$ is a meridian of $N'$. This finishes the proof. 
\end{proof}

\subsection{Genus one $3$-manifolds}(see \cite{GRW})Let us describe McMillan's construction in \cite{Mc}.
\begin{definition}\label{G}(Genus one 3-manifold) A \emph{genus one 3-manifold} $M$ is the ascending union of solid tori $\{N_{k}\}_{k\in \mathbb{N}}$, so that for each $k$, $N_{k}\subset \text{Int}~N_{k+1}$ and the geometric index of $N_{k}$ in $N_{k+1}$ is not equal to zero. 
\end{definition}

\begin{theorem}\label{genus one}\textnormal{(See [Theorem 2.8, Page 2042] in \cite{GRW} )}~~~~~~~~~~~~~~~~~~~~~~~~~~~~~~
\begin{enumerate}[leftmargin=15pt]
\item A genus one 3-manifold defined with a sequence of closed solid tori $\{N_{k}\}_{k\in \mathbb{N}}$ so that each $N_{k}$ is contractible in $N_{k+1}$, is a contractible 3-manifold that is not homeomorphic to $\mathbb{R}^{3}$. 
\item Any contractible genus one 3-manifold can be written as  an ascending union of solid tori $\{N_{k}\}_{k\in \mathbb{N}}$ so that $N_{k}$ is contractible in $N_{k+1}$.
\end{enumerate}
\end{theorem}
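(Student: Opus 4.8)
The plan is to treat the two assertions separately. For (1) I would first establish contractibility by a direct-limit computation and then rule out $\mathbb{R}^{3}$ by showing $M$ is not simply connected at infinity; for (2) I would extract a suitable subsequence of the defining tori.

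For contractibility, write $M=\cup_k N_k$ as the increasing union of the open sets $\mathrm{Int}\,N_{k+1}$, so that homotopy and homology commute with the colimit: $\pi_n(M)=\varinjlim_k \pi_n(N_k)$ and $H_n(M)=\varinjlim_k H_n(N_k)$. Each solid torus $N_k$ deformation retracts onto its core circle $K_k$, whence $\pi_1(N_k)\cong H_1(N_k)\cong\mathbb{Z}$ and all higher groups vanish. The hypothesis that $N_k$ is contractible in $N_{k+1}$ says exactly that the inclusion $N_k\hookrightarrow N_{k+1}$ is null-homotopic, so every bonding map in both direct systems is zero. Therefore $\pi_1(M)=0$ and $\widetilde{H}_*(M)=0$; by Hurewicz all homotopy groups of $M$ vanish, and since $M$ has the homotopy type of a CW complex, Whitehead's theorem gives that $M$ is contractible.

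Since simple-connectivity at infinity is a topological invariant possessed by $\mathbb{R}^{3}$, to conclude $M\not\cong\mathbb{R}^{3}$ it suffices to show $M$ is \emph{not} simply connected at infinity. Fix $K=N_0$. Given any compact $K'$, pick $m$ with $K'\subset N_m$, so that $M\setminus N_m\subset M\setminus K'$, and consider the meridian $\gamma_{m+1}\subset\partial N_{m+1}\subset M\setminus N_m$. Because each $N_k$ is null-homotopic in $N_{k+1}$, the torus $N_0$ is homotopically trivial in every $N_j$, and multiplicativity of the geometric index gives $I(N_0,N_j)=\prod_{k<j}I(N_k,N_{k+1})\neq 0$; hence Lemma \ref{injective} applies to all the pairs that occur below. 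In particular $\gamma_{m+1}$ is nontrivial in $\pi_1(\overline{N_{m+1}\setminus N_0})$. Decomposing $\overline{N_j\setminus N_0}=\overline{N_{m+1}\setminus N_0}\cup_{\partial N_{m+1}}\overline{N_j\setminus N_{m+1}}$ and applying van Kampen, the edge group $\pi_1(\partial N_{m+1})$ injects into both vertex groups (again by Lemma \ref{injective}), so $\pi_1(\overline{N_{m+1}\setminus N_0})$ injects into $\pi_1(\overline{N_j\setminus N_0})$ for every $j$, and therefore into $\pi_1(M\setminus N_0)=\varinjlim_j \pi_1(\overline{N_j\setminus N_0})$. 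Thus $\gamma_{m+1}$ remains nontrivial in $M\setminus N_0$, so the map $\pi_1(M\setminus K')\to\pi_1(M\setminus N_0)$ is nontrivial. As $K'$ was arbitrary, $M$ fails to be simply connected at infinity and is not homeomorphic to $\mathbb{R}^{3}$. I expect this amalgamation-plus-injectivity step to be the main obstacle, since it is precisely where the nonvanishing of the geometric index enters.

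For assertion (2), let $M=\cup_k N_k$ be a contractible genus one $3$-manifold, so $I(N_k,N_{k+1})\neq 0$ for all $k$. Contractibility forces $\pi_1(M)=0$, so the core $K_k$ bounds a singular disc in $M$; this disc is compact, hence lies in $N_{f(k)}$ for some $f(k)>k$. Thus the inclusion $N_k\hookrightarrow N_{f(k)}$ kills the generator of $\pi_1(N_k)$, and since $N_k$ and $N_{f(k)}$ are homotopy circles, a map inducing the zero map on $\pi_1$ is null-homotopic, i.e.\ $N_k$ is contractible in $N_{f(k)}$. Setting $m_0=0$ and $m_{i+1}=f(m_i)$ yields a strictly increasing, hence cofinal and exhausting, subsequence $\{N_{m_i}\}_i$ with $N_{m_i}$ contractible in $N_{m_{i+1}}$ and, by multiplicativity, $I(N_{m_i},N_{m_{i+1}})=\prod_{m_i\le k<m_{i+1}}I(N_k,N_{k+1})\neq 0$. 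After relabelling, this is the desired presentation of $M$.
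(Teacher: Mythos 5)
The paper does not actually prove this statement: it is imported from \cite{GRW} (Theorem 2.8 there) with only a citation, so there is no in-paper proof to compare against. Your argument is correct and essentially self-contained given results the paper establishes before this point, so it effectively supplies the missing proof. For (1), contractibility via the vanishing of all bonding maps in $\varinjlim_k\pi_n(N_k)$ plus Whitehead's theorem is fine. For the failure of simple connectivity at infinity, the two injectivity inputs you need, $\pi_1(\partial N_{m+1})\hookrightarrow\pi_1(\overline{N_{m+1}\setminus N_0})$ and $\pi_1(\partial N_{m+1})\hookrightarrow\pi_1(\overline{N_j\setminus N_{m+1}})$, are precisely the two maps of Lemma \ref{injective}, whose hypotheses (homotopic triviality of the inner torus and nonvanishing of the geometric index, via multiplicativity) you verify; the passage from the amalgamated product to injectivity of the vertex group and then to the direct limit $\pi_1(M\setminus N_0)$ is the same device the paper itself uses in the Claim inside the proof of Theorem \ref{component} (via [Theorem 11.67] of \cite{R}), so this step is consistent with the paper's toolkit rather than an extra leap. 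For (2), the key observation that a map between homotopy circles inducing zero on $\pi_1$ is null-homotopic is correct, the compactness argument placing the null-homotopy of the core inside some $N_{f(k)}$ is sound, and multiplicativity of the geometric index guarantees the relabelled cofinal subsequence is still a genus-one presentation in the sense of Definition \ref{G}. I see no genuine gap; the only cosmetic point is the implicit identification of $M\setminus N_0$ with $\overline{M\setminus N_0}$ up to homotopy equivalence, which is harmless.
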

For example, the Whitehead manifold is a contractible  genus one 3-manifold. \par
A contractible  genus one 3-manifold $M:=\cup_{i=0}^{\infty} N_{k}$ satisfies the following: 
\begin{enumerate}[leftmargin=15pt]
\item For each $k$, $N_{k}$ is homotopically trivial in $N_{k+1}$. Moreover, $I(N_{k}, N_{k+1})\geq 2$. 
\item For each $j>k$, the core $K_{k}$ of $N_{k}$ is null-homotopic in $N_{j}$ but a nontrivial knot in $N_{j}$.
\item If $N_{j}$ is viewed as an unknotted solid torus in $\mathbb{S}^{3}$, then the curves  $K_{k}$ and $\gamma_{j}\subset \mathbb{S}^{3}$ are linked in $\mathbb{S}^{3}$, for each meridian $\gamma_{j}$ of $N_{j}$ for $j>k$. Moreover, its linking number is zero. 
\item However, the knot $K_{k}\subset\mathbb{S}^{3}$ may be knotted in $\mathbb{S}^{3}$.

\end{enumerate}

\section{Embedding disc(s) in the Whitehead manifold}
As in Section 2.3, ${Wh}\subset \mathbb{S}^{3}$ is an increasing union of closed solid tori $\{N_{k}\}^{\infty}_{k=0}$ so that the geometric index $I(N_{k}, N_{k+1})=2$, for each $k$. For any $j>k$, the core $K_{k}$ of $N_{k}$ is a non-trivial knot in $N_{j}$ but is unknotted in $\mathbb{S}^{3}$. In addition, the curves $K_{k}$ and $\gamma_{j}$ are linked with zero linking number, for any meridian $\gamma_{j}$ of $N_{j}$. In this section, we investigate some embedded discs in the Whitehead manifold.
 
\begin{lemma}\label{meridian}
 Any embedded circle $\gamma \subset \partial N_{k}$ which is the boundary of a closed embedded disc $D$ in $\text{Wh}$ but is not nullhomotopic  in $\partial N_{k}$,  is a meridian of $ N_{k}$.
\end{lemma}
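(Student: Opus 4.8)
The plan is to put the disc $D$ in general position with respect to $\partial N_{k}$ and analyse the resulting intersection curves, the main input being the incompressibility of $\partial N_{k}$ furnished by Lemma \ref{injective}. Since $D$ is compact and $\text{Wh}=\cup_{k}N_{k}$ is an increasing union, $D\subset N_{m}$ for some $m>k$. By the product formula for the geometric index together with $I(N_{j},N_{j+1})=2$, we get $I(N_{k},N_{m})=2^{m-k}>0$, and $N_{k}$ is homotopically trivial in $N_{m}$; hence Lemma \ref{injective} applies and shows that $i_{2}\colon\pi_{1}(\partial N_{k})\rightarrow\pi_{1}(\overline{N_{m}\setminus N_{k}})$ is injective. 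In particular every essential simple closed curve on $\partial N_{k}$ stays essential in $\overline{N_{m}\setminus N_{k}}$.

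First I would isotope $D$ so that it meets $\partial N_{k}$ transversally; then $D\cap\partial N_{k}=\gamma\sqcup\{c_{i}\}_{i}$, where the $c_{i}$ lie in $\text{Int}\,D$. Using that $N_{m}$ is irreducible (being a solid torus), I would run the standard innermost-disc surgery to remove every $c_{i}$ that is nullhomotopic in $\partial N_{k}$: such a circle cobounds, on one side, an innermost subdisc of $D$ whose interior misses $\partial N_{k}$ and, on the other, a disc in $\partial N_{k}$; these bound a ball across which $D$ can be isotoped, strictly decreasing the number of intersection circles and leaving $\gamma=\partial D$ untouched. After finitely many such moves I may assume that every circle of $D\cap\partial N_{k}$ is essential in $\partial N_{k}$. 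Since disjoint essential simple closed curves on the torus $\partial N_{k}$ are mutually parallel, each surviving $c_{i}$ is isotopic on $\partial N_{k}$ to $\gamma$.

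Now I split into two cases. If no interior circle survives, then $\text{Int}\,D$ is disjoint from $\partial N_{k}$, so $D$ lies entirely in $N_{k}$ or entirely in $\overline{N_{m}\setminus N_{k}}$. The latter would force $\gamma$ to be nullhomotopic in $\overline{N_{m}\setminus N_{k}}$, contradicting the injectivity of $i_{2}$ since $\gamma$ is essential on $\partial N_{k}$; hence $D\subset N_{k}$, and $\gamma=\partial D$ bounds a disc in $N_{k}$ while being essential on $\partial N_{k}$, i.e.\ $\gamma$ is a meridian of $N_{k}$ by Definition \ref{mer}. If some interior circle survives, choose one, $c$, that is innermost in $D$, bounding a subdisc $\delta\subset D$ with $\text{Int}\,\delta\cap\partial N_{k}=\emptyset$; then $\delta$ lies in $N_{k}$ or in $\overline{N_{m}\setminus N_{k}}$. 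The second possibility again contradicts the injectivity of $i_{2}$, because $c$ is essential on $\partial N_{k}$, so $\delta\subset N_{k}$ and $c$ bounds a meridian disc of $N_{k}$. Thus $c$ is a meridian of $N_{k}$; as $\gamma$ is isotopic to $c$ on $\partial N_{k}$, the curve $\gamma$ is a meridian as well.

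The only delicate point is the reduction to an essential intersection pattern: I must ensure that the innermost-disc surgeries genuinely terminate, do not introduce new intersections, and do not move $\gamma$, which is exactly where irreducibility of the ambient solid torus is used. Once $\partial N_{k}$ is known to be incompressible in $\overline{N_{m}\setminus N_{k}}$ and every intersection circle is essential, hence parallel to $\gamma$, the conclusion is immediate, so I expect this normal-form step to be the main (albeit routine) obstacle.
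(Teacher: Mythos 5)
Your proof is correct, but it takes a genuinely different route from the paper's. The paper exploits the embedding of $Wh$ in $\mathbb{S}^{3}$: writing $[\gamma]=p[\gamma_{k}]+q[\theta_{k}]$ in $H_{1}(\partial N_{k})$, it identifies $\gamma$ with the torus knot $K_{p,q}$, notes that $\gamma$ is unknotted because it bounds an embedded disc, hence $p=\pm1$ or $q=\pm1$ by the classification of unknotted torus knots, and then forces $q=0$ by intersecting $\gamma$ with a meridian of $N_{k}$ produced via Corollary \ref{unknotted} and Lemma \ref{one}. You instead stay inside the manifold and use only the incompressibility of $\partial N_{k}$ in $\overline{N_{m}\setminus N_{k}}$ coming from Lemma \ref{injective}, normalize $D\cap\partial N_{k}$, and locate an innermost essential circle whose subdisc is forced into $N_{k}$; this is essentially the strategy the paper itself deploys for the genus-one generalization (Lemma \ref{meridian2}), so your argument avoids the $\mathbb{S}^{3}$ picture and all torus-knot input, and would generalize verbatim beyond the Whitehead manifold. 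One point of hygiene in your normalization: a circle that is nullhomotopic in $\partial N_{k}$ need not bound an innermost subdisc of $D$ (essential circles can be nested inside it), so as literally stated your removal step does not always apply; you should either choose the inessential circle that is innermost \emph{on $\partial N_{k}$} and do a cut-and-paste surgery, replacing the subdisc of $D$ it bounds by a push-off of the disc it bounds in $\partial N_{k}$ (this strictly decreases the number of intersection circles and needs no irreducibility), or iterate on circles innermost in $D$ and stop as soon as the innermost one is essential, at which point your Case 2 runs directly without removing the inessential circles at all. Either adjustment is routine and the conclusion stands.
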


\begin{proof} The compactness of  $D$  gives an integer $k_{0}>k$ so that $D$ is contained in $N_{k_{0}}$.

Let $\gamma$ belong to the homology class $p[\gamma_{k}]+q[\theta_{k}]$ in $H_{1}(\partial N_{k})$, where $\gamma_{k}$ and $\theta_{k}$ are a meridian and a longitude of $N_{k}$. 
Since $N_{k}$ is an unknotted solid torus in $\mathbb{S}^{3}$, $\gamma$ (as a knot in $\mathbb{S}^{3}$) is isotopic to the torus knot $K_{p, q}$ in $\mathbb{S}^{3}$. \par
Because the knot $\gamma$ bounds an embedded disc $D$ in $N_{k_{0}}$, it is a trivial knot in $N_{k_{0}}\subset \mathbb{S}^{3}$. Hence, $\gamma$ is unknotted in $\mathbb{S}^{3}$. A result (see Page 53 of  \cite{Rol}) shows that $p=\pm 1$ or $q=\pm1$.

Since the knot $\gamma$ is trivial in $N_{k_{0}}$, we use Corollary \ref{unknotted} to find a meridian disc $(D_{1}, \partial D_{1})\subset ({N}_{k_{0}}, \partial N_{k_{0}})$ with $D_{1}\cap \gamma=\emptyset$ . Because the geometric index $I(N_{k}, N_{k_{0}})>0$, it contains a meridian ${\gamma'}_{k}$ of $\partial N_{k}$ (Lemma \ref{one}).

 Therefore, ${\gamma'}_{k}\cap \gamma$ is empty. Their intersection number on $\partial N_{k}$ must be zero.\par

The intersection number of $\gamma$ and ${\gamma'}_{k}$ is $q$, which implies that $p=\pm 1, q=0$. That is to say, $\gamma$ is homotopic to the meridian $\gamma'_{k}$ on $\partial N_{k}$. This completes the proof.  
\end{proof}

\begin{theorem}\label{component}
Any $\gamma\subset \partial N_{k}$ bounding an embedded disc $D$ in $\text{Wh}$ satisfies one of the following:
\begin{enumerate}
\item $[\gamma]$ is trivial in $\pi_{1}(\partial N_{k})$,
\item $D \cap \text{Int}~N_{l}$ has at least $I(N_{l}, N_{k})$ components intersecting $N_{0}$, for $l<k$.
\end{enumerate}
\end{theorem}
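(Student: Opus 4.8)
The plan is to prove Theorem \ref{component} by induction on $k-l$, using the multiplicativity of the geometric index (property 1 in Subsection 2.4) together with Lemma \ref{one} as the base-case engine. Assume $[\gamma]$ is nontrivial in $\pi_1(\partial N_k)$, so by Lemma \ref{meridian} the curve $\gamma$ is a meridian of $N_k$. The goal is to count the connected components of $D\cap \text{Int}~N_l$ that reach all the way down to $N_0$, and show there are at least $I(N_l,N_k)$ of them.

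Let me sketch the **base case** $l=k-1$ first, since it contains the main idea. We arrange $D$ to meet $\partial N_{k-1}$ transversally, so $D\cap \partial N_{k-1}$ is a disjoint union of circles cutting $D$ into pieces. The components of $D\cap N_{k-1}$ are the subdiscs lying inside $N_{k-1}$. **First I would** argue that the boundary circles of these subpieces that are essential in $\partial N_{k-1}$ must in fact be meridians of $N_{k-1}$: applying the argument of Lemma \ref{one} to the meridian disc $D$ of $N_k$ (with $N'=N_{k-1}$, which is homotopically trivial in $N_k$ with $I(N_{k-1},N_k)>0$) produces meridians of $N_{k-1}$ inside $D$. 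The heart of the matter is a counting argument: each meridian disc of $N_k$ must intersect the core $K_{k-1}$ of $N_{k-1}$ in at least $I(N_{k-1},N_k)$ points by the very definition of geometric index (Definition \ref{index}), and each such intersection forces a distinct component of $D\cap N_{k-1}$ that genuinely descends into the core region, hence reaches $N_0$. I would make this precise by intersecting $D$ with a meridian disc of $N_{k-1}$ and tracking how the geometric index bounds the arcs from below.

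**For the inductive step**, suppose the claim holds for $N_{l+1}\subset N_k$, giving at least $I(N_{l+1},N_k)$ components of $D\cap N_{l+1}$ meeting $N_0$. Each such component is an embedded subdisc $D'$ of $D$ whose boundary lies on $\partial N_{l+1}$; after checking that $\partial D'$ is a meridian of $N_{l+1}$ (again via the injectivity in Lemma \ref{injective} and the meridian-detection of Lemma \ref{one}), I apply the base-case reasoning to $N_l\subset N_{l+1}$ to conclude that each such $D'$ splits into at least $I(N_l,N_{l+1})$ subcomponents meeting $N_0$. Multiplying the counts and invoking $I(N_l,N_k)=I(N_l,N_{l+1})\,I(N_{l+1},N_k)$ yields the desired bound. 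The phrase ``intersecting $N_0$'' is what lets the counts multiply cleanly, since a component surviving down to $N_0$ through $N_{l+1}$ restricts to components surviving down to $N_0$ through $N_l$.

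**The hard part will be** the bookkeeping that converts ``geometric index $\geq I$'' into ``at least $I$ distinct components reaching $N_0$.'' The geometric index is a minimum over all meridian discs of the intersection number with a core, whereas $D$ is a \emph{given} disc; I must show the relevant count for $D$ is bounded below by this minimum, not above. The clean way is to realize that a component of $D\cap N_l$ failing to reach $N_0$ can be capped off or discarded without lowering the algebraic intersection with the core $K_l$, so the surviving components carry the full geometric index. Ensuring that the essential boundary circles are genuinely meridians (so that capping them off on $\partial N_l$ stays inside the correct side, exactly as in the $Z\subset \text{Int}~N'$ dichotomy of Lemma \ref{one}) is the delicate topological point, and I expect the injectivity statements of Lemma \ref{injective} to be used repeatedly to rule out the wrong-side configurations.
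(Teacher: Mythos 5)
Your skeleton (induct through the nested tori, detect meridians via Lemmas \ref{injective} and \ref{one}, multiply geometric indices) matches the paper's in outline, but both pivotal steps contain genuine gaps.

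\emph{The base case.} You derive ``at least $I(N_{k-1},N_k)$ components of $D\cap \text{Int}~N_{k-1}$ reaching $N_0$'' from ``$D$ meets the core $K_{k-1}$ in at least $I(N_{k-1},N_k)$ points, and each intersection point forces a distinct component.'' The second half is false: several intersection points with $K_{k-1}$ can lie on one and the same component of $D\cap\text{Int}~N_{k-1}$, so the point count does not bound the component count from below. Your proposed repair (cap off or discard components not reaching $N_0$ ``without lowering the algebraic intersection with the core'') cannot work either: the algebraic intersection of $D$ with $K_{k-1}$ equals the linking number of $\gamma$ with $K_{k-1}$, which is \emph{zero} for the Whitehead link, and discarding a component that happens to meet $K_{k-1}$ does lower the geometric count. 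The paper's actual engine is its Claim that among the maximal circles of $D\cap\partial N_{k-1}$ in $D$ there are at least \emph{two} meridians of $N_{k-1}$, and the ``two'' comes precisely from linking number zero: if there were only one, replacing that subdisc by a standard meridian disc of $N_{k-1}$ (meeting $K_{k-1}$ once) and the other maximal subdiscs by discs on $\partial N_{k-1}$ would exhibit linking number $\pm 1$ for $\gamma\amalg K_{k-1}$. Your proposal never invokes this linking-number-zero property, which is the essential input. (The correct way to convert ``geometric index $\geq I$'' into a count --- used in the paper's proof of Theorem \ref{PT} --- is to count maximal non-contractible circles of $D\cap\partial N_l$: if there were fewer than $I$, surgering each onto a standard meridian disc of $N_l$ and each inessential one onto $\partial N_l$ would produce a meridian disc of $N_k$ meeting $K_l$ in fewer than $I$ points, contradicting Definition \ref{index}. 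The count lives on meridian circles, not on intersection points of the given disc with the core.)

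\emph{The inductive step.} You propagate ``components of $D\cap N_{l+1}$ meeting $N_0$'' and assert each is a subdisc with meridian boundary to which the base case applies. But a component of $D\cap\text{Int}~N_{l+1}$ that meets $N_0$ need not have any boundary circle essential in $\partial N_{l+1}$ (a disc with inessential boundary on $\partial N_{l+1}$ can still plunge through $N_0$), so the meridian machinery cannot be applied to it and the counts do not multiply. The invariant that propagates correctly is a collection of pairwise disjoint subdiscs of $D$ bounded by meridians of the intermediate torus; ``meeting $N_0$'' is the conclusion extracted at the bottom, not the inductive hypothesis. This is exactly how the paper's induction (on $k$, applied to the two disjoint meridian subdiscs $D_{j_0}$ and $D_{j_1}$) is organized.
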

Note that the geometric index $I(N_{l}, N_{k})$ is equal to $2^{k-l}$.
\begin{proof}
We argue by induction on $k$. 
\begin{itemize}
\item When $k=0$, it is trivial.
\item We suppose that it holds for $N_{k-1}$.
\end{itemize} 
We suppose that $\gamma$ is not contractible in $\partial N_{k}$. Lemma \ref{meridian} tells that $\gamma$ is a meridian of $N_{k}$.
 In addition,  the linking number of $\gamma\amalg K_{k-1}$ is zero, where $K_{k-1}$ is the core of $N_{k-1}$.\par

We may assume that $D$ intersects $\partial N_{k-1}$ transversally. The set $D\cap \partial N_{k-1}$ has finitely many components $C:=\{\gamma_{i}\}_{i\in I}$. Each component $\gamma_{i}$ is an embedded circle and bounds a unique closed disc $D_{i}\subset \text{Int}~D$. 

Let $\{\gamma_{j}\}_{j\in I_{0}}$ be the set of maximal circles in $C$ where $I_{0}\subset I$. Each $\gamma_{j}$ is the boundary of the disc $D_{j}$, for $j\in I_{0}$.

\vspace{2mm}

\noindent $\bold{Claim}$: The set $\{\gamma_{j}\}_{j\in I_{0}}$ contains (at least)  two meridians of ${N}_{k-1}$.

By Lemma \ref{injective},  the maps $\pi_{1}(\partial N_{k})\rightarrow \pi_{1}(\overline{N_{k}\setminus N_{0}})$ and $\pi_{1}(\partial N_{k})\rightarrow \pi_{1}(\overline{N_{m}\setminus N_{k}})$ are both injective for any $m>k$. Van-Kampen's Theorem gives an isomorphism between $\pi_{1}(\overline{N_{m}\setminus N_{0}})$ and $\pi_{1}(\overline{N_{k}\setminus N_{0}})\ast_{\pi_{1}(\partial N_{k})} \pi_{1}(\overline{N_{m}\setminus N_{k}})$. We use  [Theorem 11.67, Page 404] in \cite{R} to see that the map $\pi_{1}(\partial N_{k})\rightarrow \pi_{1}(\overline{N_{m}\setminus N_{0}})$ is also injective for $m>k$.  Therefore, since $\gamma$ is not contractible  in $\partial N_{k}$, we can conclude that it is not contractible in $\overline{Wh\setminus N_{0}}$.

If $\gamma_{j}$ is homotopically trivial in $\partial N_{k-1}$ for each $j \in I_{0}$, then one finds a disc $D'_{j}\subset \partial N_{k-1}$. Consider a new disc $D':=(\Sigma\setminus\cup_{j\in I_{0}}D_{j})\cup(\cup_{j\in I_{0}}D'_{j})$ in $\overline{Wh\setminus N_{0}}$ with boundary $\gamma$. Hence, $\gamma$ is contractible in $\overline{Wh\setminus N_{0}}$. This contradicts with the last paragraph.  We can conclude  that  one of circles $\{\gamma_{j}\}_{j\in I_{0}}$ is non-contractible in $\partial N_{k-1}$.  
 Further, Lemma \ref{meridian} implies that there is at least one meridian of ${N}_{k-1}$ in $\{\gamma_{j}\}_{j\in I_{0}}$.

\vspace{1mm}
  In the following, we argue by contradiction. Suppose that there is a unique meridian of ${N}_{k-1}$ in the set $\{\gamma_{j}\}_{j\in I_{0}}$. That is to say, there is a unique $j_{0}\in I_{0}$ such that $\gamma_{j_{0}}$ is a meridian of $N_{k-1}$.  Remark that each $\gamma_{j}$ bounds the unique disc $D_{j}\subset D$.

 If $\gamma_{j}$ is not contractible in $\partial N_{k}$ for some $j\in I_{0}\setminus\{j_{0}\}$, Lemma \ref{meridian} shows that it is a meridian, which contradicts the uniqueness of $j_{0}$. Therefore, $\gamma_{j}$ is nullhomotopic in $\partial N_{k-1}$, for each $j\in I_{0}\setminus \{j_{0}\}$.

Consider a meridian disc $\hat{D}_{j_{0}}$ of $N_{k-1}$ with boundary $\gamma_{j_{0}}$, which intersects  the core $K_{k-1}$ of $N_{k-1}$ transversally at one point. For $j\in I_{0}\setminus \{j_{0}\}$, there exists a disc $\hat{D}_{j}\subset \partial N_{k-1}$ with boundary $\gamma_{j}$.

Define  a new disc $\hat{D}:=(D\setminus \cup_{j\in I_{0}}D_{j})\cup_{j\in I_{0}}(\cup_{\gamma_{j}}\hat{D}_{j})$ with boundary $\gamma$. It intersects $ K_{k-1}$ transversally at one point, which implies that the intersection number between $\hat{D}$ and $K_{k-1}$ is $\pm 1$.\par
Therefore, the linking number of $\gamma\amalg K_{k-1}$ is $\pm 1$. This is in contradiction with the fact that its linking number is zero. 

This completes the proof of the Claim.

From the Claim, there are at least two distinct meridians, $\gamma_{j_{0}}$ and $\gamma_{j_{1}}$, of $N_{k-1}$. Applying our inductive assumption to $D_{j_{0}}$ and $D_{j_{1}}$ respectively, we have that $D_{j_{t}}\cap\text{Int}~ N_{l}$ has at least $2^{k-1-l}$ components intersecting $N_{0}$ for $t=0,1$ for $l\leq k-1$. Therefore, $D\cap \text{Int}~N_{l}$ has at least $2^{k-l}$ components intersecting $N_{0}$.  
\end{proof}

Based on Theorem \ref{component}, we introduce a topological property.\par

\begin{definition}\label{P} Any contractible genus one 3-manifold $M$ is called to satisfy \emph{Property $P$} if for any properly embedded plane $\Sigma\subset M$, any $k>0$ and any closed curve $\gamma\subset \partial N_{k}\cap \Sigma$, it holds one of the following:
\begin{enumerate}[leftmargin=15pt]
\item $\gamma$ is contractible in $\partial N_{k}$;
\item for $l<k$, $D\cap \text{Int}~N_{l}$ has at least $I(N_{l}, N_{k})$ components intersecting $N_{0}$, \end{enumerate}
where $D\subset \Sigma$ is a unique disc  with boundary $\gamma$ and $\{N_{k}\}_{k}$ is a sequence  as described in Theorem \ref{genus one}.
\end{definition}
We will show that all contractible genus one 3-manifolds satisfy Property $P$ (Theorem \ref{PT}).

\section{Property $P$ and Genus one $3$-manifolds}

In this section, we show that any contractible genus one $3$-manifold satisfies Property $P$.
First, recall some notations from Section 2. Any contractible genus one $3$-manifold $M$ is the ascending union of closed solid tori $\{N_{k}\}_{k=0}^{\infty}$ so that $N_{k}$ is contractible in $N_{k+1}$ and the geometric index $I(N_{k}, N_{k+1})\geq 2$.

In the genus one case,  Lemma \ref{meridian} can be generalized as follows: 

\begin{lemma}\label{meridian2} A circle $\gamma\subset \Sigma\cap \partial N_{k}$, which is not contractible in $\partial N_{k}$, is a meridian of $N_{k}$, where $\Sigma\subset M$ is a properly embedded plane. Moreover, the unique disc $D\subset \Sigma$ with boundary $\gamma$ intersects the core $K_{0}$ of $N_{0}$.
\end{lemma}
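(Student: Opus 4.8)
The plan is to follow the template of Lemma~\ref{meridian}, but to replace the ambient $\mathbb{S}^{3}$/torus-knot reasoning (unavailable in the genus one setting, where the tori $N_{k}$ may be knotted in $\mathbb{S}^{3}$) by purely intrinsic homological facts, and then to establish the ``Moreover'' clause by a fundamental-group argument rather than an intersection count. For the reduction, since $\Sigma$ is a properly embedded plane, $\gamma$ is an embedded circle in $\Sigma\cong\mathbb{R}^{2}$ and hence bounds a unique compact disc $D\subset\Sigma$. As $D$ is compact and $\{N_{k}\}$ exhausts $M$, we have $D\subset\mathrm{Int}\,N_{k_{0}}$ for some $k_{0}>k$; in particular $\gamma$ is a trivial knot in $N_{k_{0}}$. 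By multiplicativity of the geometric index, $I(N_{k},N_{k_{0}})=\prod_{j=k}^{k_{0}-1}I(N_{j},N_{j+1})\ge 2^{k_{0}-k}>0$, and $N_{k}$ is homotopically trivial in $N_{k_{0}}$.

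To see that $\gamma$ is a meridian, I would produce a meridian of $N_{k}$ disjoint from $\gamma$ and compute an intersection number on the torus $\partial N_{k}$. Thicken $\gamma$ to a thin solid torus $V\subset N_{k_{0}}$ with core $\gamma$; since $\gamma$ is trivial in $N_{k_{0}}$, Corollary~\ref{unknotted} yields a meridian disc $D_{1}$ of $N_{k_{0}}$ with $D_{1}\cap V=\emptyset$, hence $D_{1}\cap\gamma=\emptyset$. Because $N_{k}$ is homotopically trivial in $N_{k_{0}}$ and $I(N_{k},N_{k_{0}})>0$, Lemma~\ref{one} produces a meridian $\gamma'_{k}$ of $N_{k}$ with $\gamma'_{k}\subset D_{1}$, so $\gamma'_{k}\cap\gamma\subset D_{1}\cap\gamma=\emptyset$. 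Writing $[\gamma]=p[\gamma_{k}]+q[\theta_{k}]$ in $H_{1}(\partial N_{k})$, the algebraic intersection number of $\gamma$ with the meridian $\gamma'_{k}$ on $\partial N_{k}$ equals $\pm q$, so disjointness forces $q=0$. Since $\gamma$ is an embedded essential circle on the torus, its class is primitive, so $\gcd(p,q)=1$ and therefore $p=\pm1$. Thus $[\gamma]=\pm[\gamma_{k}]$, $\gamma$ is null-homotopic in $N_{k}$, and $\gamma$ is a meridian of $N_{k}$ by Definition~\ref{mer}.

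The hard part is the ``Moreover'' clause, because the \emph{algebraic} intersection number $D\cdot K_{0}$ vanishes: $K_{0}$ is null-homotopic in $N_{k}$, so $D$ meets $K_{0}$ algebraically zero times and no homological count can detect a geometric intersection. I would instead argue by contradiction through $\pi_{1}$. Suppose $D\cap K_{0}=\emptyset$. As $D$ and $K_{0}$ are compact and disjoint, choose a thin solid-torus neighborhood $N'_{0}$ of $K_{0}$ with $D\cap N'_{0}=\emptyset$; being a tube about the core $K_{0}$, $N'_{0}$ is isotopic to $N_{0}$, so $N'_{0}$ is homotopically trivial in $N_{k}$ and $I(N'_{0},N_{k})=I(N_{0},N_{k})>0$. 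By Lemma~\ref{injective} the inclusions induce injections $\pi_{1}(\partial N_{k})\hookrightarrow\pi_{1}(\overline{N_{k}\setminus N'_{0}})$ and $\pi_{1}(\partial N_{k})\hookrightarrow\pi_{1}(\overline{N_{k_{0}}\setminus N_{k}})$. Applying Van-Kampen's theorem to $\overline{N_{k_{0}}\setminus N'_{0}}=\overline{N_{k}\setminus N'_{0}}\cup_{\partial N_{k}}\overline{N_{k_{0}}\setminus N_{k}}$, together with [Theorem~11.67, Page~404] in \cite{R} and exactly as in the proof of Theorem~\ref{component}, shows $\pi_{1}(\partial N_{k})\hookrightarrow\pi_{1}(\overline{N_{k_{0}}\setminus N'_{0}})$ is injective. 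Hence $\gamma$, which is not contractible in $\partial N_{k}$, is not contractible in $\overline{N_{k_{0}}\setminus N'_{0}}$. But $D\subset\overline{N_{k_{0}}\setminus N'_{0}}$ has $\partial D=\gamma$, so $\gamma$ is contractible there, a contradiction. Therefore $D\cap K_{0}\neq\emptyset$. The two points where I expect to spend the most care are the isotopy claim that a thin tube about $K_{0}$ remains homotopically trivial in $N_{k}$ with the same geometric index, and the Van-Kampen bookkeeping that upgrades the two boundary injections to an injection into $\pi_{1}(\overline{N_{k_{0}}\setminus N'_{0}})$.
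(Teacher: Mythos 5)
Your proof is correct, and its first half takes a genuinely different route from the paper's. For the meridian claim the paper does \emph{not} reuse the scheme of Lemma \ref{meridian}; it runs an innermost-circle argument on $D\cap\partial N_{k}$: all non-contractible intersection circles are shown to be homotopic to $\gamma$ on the torus, a minimal one $\gamma_{j}$ is capped off inside $\partial N_{k}$ to produce an immersed disc $\hat{D}_{j}$, and Lemma \ref{injective} forces $\hat{D}_{j}\subset N_{k}$, whence $\gamma_{j}$ (hence $\gamma$) is null-homotopic in $N_{k}$. You instead salvage the Lemma \ref{meridian} template: thicken $\gamma$ to a tube $V$, use Corollary \ref{unknotted} and Lemma \ref{one} to produce a meridian $\gamma'_{k}$ of $N_{k}$ disjoint from $\gamma$, and replace the $\mathbb{S}^{3}$ torus-knot/unknotting step (indeed unavailable for genus one manifolds, which need not embed in $\mathbb{S}^{3}$) by primitivity of embedded essential curves on a torus. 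This is valid, and in fact slightly more than you need: once $q=0$, the class $[\gamma]=p[\gamma_{k}]$ already dies in $\pi_{1}(N_{k})$, which together with non-contractibility in $\partial N_{k}$ is all Definition \ref{mer} asks, so the primitivity step is decorative. Your route is arguably more elementary for the lemma in isolation; what the paper's innermost-circle construction buys is that the discs $\hat{D}_{j}$ it produces are reused verbatim in the proof of Theorem \ref{PT}, so the extra work there is not wasted. The ``Moreover'' clause in your write-up is essentially the paper's own argument (Lemma \ref{injective} plus Van Kampen plus the injectivity of factors in an amalgamated product), merely localized to $\overline{N_{k_{0}}\setminus N'_{0}}$ instead of $M\setminus K_{0}$; the two technical points you flag (the tube $N'_{0}$ has the same core as $N_{0}$, so the geometric index and homotopical triviality are immediate from the definitions, and the Van Kampen bookkeeping) go through without difficulty.
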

\begin{proof} We may assume that $\Sigma$ intersects $\partial N_{k}$ transversally. Since $\Sigma$ is properly embedded, $\Sigma\cap \partial N_{k}:=\{\gamma_{i}\}_{i=0}^{n}$ has finitely many components, where $\gamma_{0}=\gamma$. Each $\gamma_{i}$ bounds a unique closed disc $D_{i}\subset \Sigma$ (where $D_{0}=D$).

Define the set $C:=\{\gamma_{i}| \gamma_{i}\subset D_{0}$ is not contractible in $\partial N_{k}\}$. It is not empty ($\gamma_{0}\subset D_{0}$). 
The disjointness of $\{\gamma_{i}\}_{i=0}^{n}$ implies that  the intersection number of $\gamma\amalg\gamma_{i}$ in $\partial N_{k}$ is zero for each $i\neq 0$. 

If $[\gamma_{i}]$ is not equal to $\pm[\gamma]$ in $\pi_{1}(\partial N_{k})$ for some $\gamma_{i}\in C$, the intersection number of $\gamma$ and $\gamma_{i}$ in $\partial N_{k}$ is nonzero. This contradicts the above fact. We can conclude that each $\gamma_{i}\in C$ is homotopic to $\gamma$ in $\partial N_{k}$. 

\vspace{2mm}

In the following, we will show that each minimal circle $\gamma_{j}$ in $C$ is a meridian. This is to say, $\gamma$ is also a meridian of $N_{k}$. 

The remaining proof is the same as  the proof of  Lemma \ref{one}. It is sufficient to show that $\gamma_{j}$ is homotopically trivial in $N_{k}$. We begin by constructing an immersed disc $\hat{D}_{j}\subset {N}_{k}$ with boundary $\gamma_{j}$. 

\vspace{2mm}

Let us consider the set $C_{j}:=\{\gamma_{i}~| \gamma_{i}\subset \text{Int} ~D_{j}\}\subset C$ and  the set $C^{max}_{j}$ of maximal circles  in $C_{j}$. One has two cases: $C_{j}=\emptyset$ or $C_{j}\neq\emptyset$.

\noindent\textbf{Case I:} If $C_{j}$ is empty, we consider the set $Z:=\text{Int}~D_{j}$ and the disc $\hat{D}_{j}:=D_{j}$;

\noindent\textbf{Case II:} If $C_{j}$ is not empty, then $C^{max}_{j}$ is also non-empty. From the minimality of $\gamma_{j}$, each $\gamma_{i}\in C^{max}_{j}$ is contractible in $\partial N_{k}$ and bounds a disc ${D}'_{i}\subset \partial N_{k}$.

Define the set $Z:=\text{Int}~D_{j}\setminus \cup_{\gamma_{i}\in C^{max}_{j}}D_{i}$ and the disc $\hat{D}_{j}:=Z\cup(\cup_{\gamma_{i}\in C^{max}_{j}}{D}'_{i})$ with boundary $\gamma_{j}$. 

\vspace{1.5mm}

Let us explain why $\hat{D}_{j}$ is contained in $N_k$. In any case, $\partial N_{k}$ cuts $M$ into two connected components, $M\setminus N_{k}$ and $\text{Int}~N_{k}$. The set $Z$ is one of these components of $\text{Int} ~D_{j}\setminus \partial N_{k}$. Therefore, it is in $M\setminus N_{k}$ or $\text{Int}~N_{k}$. 

 If $Z$ is in $M\setminus N_{k}$, then the disc $\hat{D}_{j}$ with boundary $\gamma_{j}$ is contained in $\overline{M\setminus N_{k}}$. Therefore, we see that $[\gamma_{j}]=1$ in $\pi_{1}(\overline{M\setminus N_{k}})$. However, the map $\pi_{1}(\partial N_{k})\rightarrow \pi_{1}(\overline{M\setminus N_{k}})$ is injective (Lemma \ref{injective}). That is to say, $\gamma_{j}$ is null-homotopic in $\partial N_{k}$. This contradicts the fact that $[\gamma_{j}]\neq 1$ in $\pi_{1}(\partial N_{k})$.
 We can conclude that $Z$ is contained in $\text{Int}~N_{k}$.

 \vspace{2mm}
 
Therefore, $\hat{D}_{j}$ is contained in ${N}_{k}$. Its boundary $\gamma_{j}$ is nullhomotopic in ${N}_{k}$. Since $\gamma$ is homotopic to $\gamma_{j}$ in $\partial N_{k}$, it is also contractible in ${N}_{k}$. By Definition \ref{mer}, $\gamma$ must be a meridian of $N_{k}$. 

\vspace{2mm}

By Lemma \ref{injective}, the two induced maps $\pi_{1}(\partial N_{k})\rightarrow \pi_{1}(\overline{M\setminus N_{k}})$ and $\pi_{1}(\partial N_{k})\rightarrow \pi_{1}({N}_{k}\setminus K_{0})$ are both injective. Van-Kampen's theorem shows that $\pi_{1}(M\setminus K_{0})\cong \pi_{1}(\overline{M\setminus N_{k}})\ast_{\pi_{1}(\partial N_{k})}\pi_{1}({N}_{k}\setminus K_{0})$. Further, we have that  $\pi_{1}(\partial N_{k})\rightarrow \pi_{1}(M\setminus K_{0})$ is also injective (see [Theorem 11.67, Page 404] of \cite{R}). Thus, $[\gamma]\neq 1$ in $\pi_{1}(M\setminus K_{0})$. Namely $D\subset \Sigma$ with boundary $\gamma$ must intersect the core $K_{0}$ of $N_{0}$.
\end{proof}
\begin{rem}~~~~~~~~~~
\begin{itemize}[leftmargin=15pt]\item In the proof, the set $\hat{D}_{j}\cap \text{Int}~N_{k}$ is equal to the set $Z$, a subset of $D\cap \text{Int}~N_{k}$. 
\item The disc $\hat{D}_{j}$ may not be embedded, because $D'_{i}$ may be contained in some $D'_{i'}$. When it is not an embedding, we can deform $\hat{D}_{j}$ in a small neighborhood of $\partial N_{k}$ in ${N}_{k}$ so that it becomes an embedded disc in ${N}_{k}$.
\end{itemize}
\end{rem}

\begin{theorem} \label{PT}Any contractible genus one $3$-manifold $M$ satisfies Property $P$.
\end{theorem}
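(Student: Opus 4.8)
The plan is to run the same induction on $k$ that proves Theorem~\ref{component}, now carried out inside an arbitrary contractible genus one $3$-manifold $M$ and for the disc cut out of a plane rather than for an abstract embedded disc. Fix a properly embedded plane $\Sigma\subset M$, an integer $k>0$ and a curve $\gamma\subset\Sigma\cap\partial N_{k}$ that is not contractible in $\partial N_{k}$. Since $\Sigma$ is homeomorphic to $\mathbb{R}^{2}$, the sub-disc $D\subset\Sigma$ bounded by $\gamma$ is compact and embedded in $M$, so we are exactly in the situation of an embedded meridian disc with boundary on $\partial N_{k}$. By Lemma~\ref{meridian2}, $\gamma$ is a meridian of $N_{k}$ and $D$ meets the core $K_{0}$ of $N_{0}$. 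I would then establish clause~(2) of Definition~\ref{P} by strong induction on $k$, the case $k=0$ being trivial since there is no $l<0$, and the inductive hypothesis being Property~$P$ for the same plane $\Sigma$ at level $k-1$.

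For the inductive step, make $D$ transverse to $\partial N_{k-1}$, write $D\cap\partial N_{k-1}=\{\gamma_{i}\}_{i\in I}$ with nested discs $D_{i}\subset D$, and let $\{\gamma_{j}\}_{j\in I_{0}}$ be the maximal circles, which bound pairwise disjoint discs. The heart of the argument is the following strengthening of the ``at least two'' claim of Theorem~\ref{component}.

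\medskip
\noindent\textbf{Claim.} At least $I(N_{k-1},N_{k})$ of the maximal circles $\gamma_{j}$ are meridians of $N_{k-1}$.
\medskip

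\noindent Granting the Claim, each such maximal meridian $\gamma_{j}$ bounds a sub-disc $D_{j}\subset D$, these discs are pairwise disjoint, and by Lemma~\ref{meridian2} each $D_{j}$ meets $N_{0}$. Applying the inductive hypothesis to $D_{j}$ shows that $D_{j}\cap\mathrm{Int}\,N_{l}$ has at least $I(N_{l},N_{k-1})$ components meeting $N_{0}$ for every $l<k-1$, while for $l=k-1$ the portion of $D_{j}$ lying in $\mathrm{Int}\,N_{k-1}$ already supplies one such component. Summing over the $\ge I(N_{k-1},N_{k})$ disjoint discs and invoking the multiplicativity $I(N_{l},N_{k})=I(N_{l},N_{k-1})\,I(N_{k-1},N_{k})$ from \cite{Sch} yields the required bound $I(N_{l},N_{k})$ and closes the induction.

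The main obstacle is the Claim, and this is precisely where the genus one case departs from the Whitehead case. In the latter $I(N_{k-1},N_{k})=2$, so it is enough to produce one maximal meridian (from the injectivity of $\pi_{1}(\partial N_{k})\to\pi_{1}(\overline{M\setminus N_{0}})$ furnished by Lemma~\ref{injective} and van Kampen, exactly as in Theorem~\ref{component}) and then to exclude the possibility of exactly one: a single maximal meridian disc would meet $K_{k-1}$ algebraically $\pm1$, contradicting $\mathrm{lk}(\gamma,K_{k-1})=0$. For a general even index this parity argument only produces an even, nonzero number of maximal meridians, not the sharp lower bound $I(N_{k-1},N_{k})$. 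To reach the sharp count I would argue directly from the geometric index: first surger away, in a collar of $\partial N_{k-1}$, every circle of $D\cap\partial N_{k-1}$ that is contractible in $\partial N_{k-1}$ — a modification that leaves $D\cap N_{l}$ untouched for $l<k-1$, in the spirit of the Remark following Lemma~\ref{meridian2} — reducing to a meridian disc of $N_{k}$ whose intersection with $N_{k-1}$ consists of meridian discs of $N_{k-1}$; then bound the number of these meridian discs below by the minimal core-intersection number defining $I(N_{k-1},N_{k})$ in Definition~\ref{index}. Establishing this lower bound, and verifying that the resulting meridian discs can be realized by pairwise disjoint (outermost) discs so that the induction applies, is the technical crux that replaces the single linking-number computation of the Whitehead case; it is exactly here that the full strength of Schubert's geometric index \cite{Sch,GRW} must be used.
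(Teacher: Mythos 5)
Your overall strategy --- reduce to counting how many outermost circles of $D\cap\partial N_{k-1}$ are meridians of $N_{k-1}$, bound that count below by a geometric index, and propagate downwards --- is the same mechanism the paper uses, except that the paper descends from $N_{k}$ directly to each $N_{l}$ (its Claim is $|\hat{C}^{non}|\geq I(N_{l},N_{k})$ for the outermost circles of a capped-off disc on $\partial N_{l}$), whereas you descend one level at a time and recover $I(N_{l},N_{k})$ from Schubert's multiplicativity $I(N_{l},N_{k})=I(N_{l},N_{k-1})I(N_{k-1},N_{k})$. That structural variant is legitimate, and your bookkeeping (disjointness of the outermost discs $D_{j}$, each $D_{j}$ meeting $N_{0}$ by Lemma \ref{meridian2}, the case $l=k-1$ handled separately) is correct.

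The gap is in the proof of your Claim, which you yourself flag as the crux and do not complete. You assert that $D\subset\Sigma$ puts us ``exactly in the situation of an embedded meridian disc'', but a meridian disc of $N_{k}$ must be contained in $N_{k}$ (Definition \ref{mer}), and the sub-disc of a properly embedded plane bounded by a curve on $\partial N_{k}$ has no reason to stay inside $N_{k}$: the plane $\Sigma$ can wander in and out of $N_{k}$. Consequently the disc you obtain after surgering along $\partial N_{k-1}$ is still not a meridian disc of $N_{k}$, and Definition \ref{index} --- the minimal number of intersections of the core $K_{k-1}$ with a \emph{meridian disc of $N_{k}$} --- cannot be invoked to conclude $m\geq I(N_{k-1},N_{k})$. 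This is precisely what the first half of the paper's proof supplies: it passes to a minimal (innermost) non-contractible circle $\gamma_{j}$ of $D\cap\partial N_{k}$, uses the construction from Lemma \ref{meridian2} to build an embedded disc $\hat{D}_{j}\subset N_{k}$ with $\hat{D}_{j}\cap\text{Int}~N_{k}\subset D\cap\text{Int}~N_{k}$, and only then performs the surgery along $\partial N_{l}$ and counts intersections with the core $K_{l}$. Your argument can be repaired the same way (cap off along an innermost non-contractible circle of $D\cap\partial N_{k}$ after the surgery along $\partial N_{k-1}$; the caps lie in $\partial N_{k}$ and do not add intersections with $K_{k-1}$), but as written the appeal to the geometric index is unjustified, and this is the one step where the whole theorem actually lives.
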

\begin{proof} 
Consider a properly embedded plane $\Sigma\subset M$. Suppose there is some closed curve $\gamma\subset \Sigma\cap \partial N_{k}$ which is not contractible in $\partial N_{k}$ for some $k\in \mathbb{N}_{>0}$.  Lemma \ref{meridian2} shows that $\gamma$ is a meridian of $N_{k}$ and bounds a unique closed disc $D\subset \Sigma$. 

We may assume that $\Sigma$ intersects  $\partial N_{k}$ transversally. 
The set $D\cap \partial N_{k}:=\{\gamma_{i}\}_{i=0}^{n}$ has finitely many components where $\gamma_{0}=\gamma$. 

Define the set $C:=\{ \gamma_{i}~|$ the circle $\gamma_{i}\subset D\cap \partial N_{k}$ is not contractible in $\partial N_{k}\}$.  (It is not empty because $\gamma\subset D$). By Lemma \ref{meridian2},  each minimal circle $\gamma_{j}$ in $C$ is a meridian of $N_{k}$. It bounds a unique closed disc $D_{j}\subset D$. As in the proof of Lemma \ref{meridian2},  we construct a disc $\hat{D}_{j}\subset {N}_{k}$ with boundary $\gamma_{j}$.  Remark that  $\hat{D}_{j}\cap \text{Int}~N_{k}$ is a subset of $D\cap \text{Int}~N_{k}$.

As described in the above remark, the disc $\hat{D}_{j}$ may not be  embedded. If necessary,  we can deform it in a small neighborhood of $\partial N_{k}$ in ${N}_{k}$ so that it becomes an embedded disc.  For $l<k$, $ \hat{D}_{j}\cap \text{Int}~N_{l}$ is still a subset of $D\cap \text{Int}~N_{l} \subset \Sigma$. 

\vspace{2mm}

It is sufficient for us to show that $\hat{D}_{j}\cap \text{Int}~N_{l}$ has at least $I(N_{l}, N_{k})$ components intersecting $N_{0}$. 

 We may assume that $\hat{D}_{j}$ intersects $\partial N_{l}$ transversally. The intersection $\hat{D}_{j}\cap \partial N_{l}:=\{\gamma'_{t}\}_{t\in T}$ has finitely many components. Let us consider the set $\hat{C}^{max}$ of maximal circles in $\{\gamma'_{t}\}_{t\in T}$ and its subset $\hat{C}^{non}:=\{\gamma'_{t}\in \hat{C}^{max}|\gamma'_{t}$ is not contractible in $\partial N_{l}\}$. 
 
 \textbf{Claim:} $|\hat{C}^{non}|\geq I(N_{l}, N_{k})$.
 
We argue by contradiction. Suppose that $|\hat{C}^{non}|<I(N_{l}, N_{k})$. Each $\gamma'_{t}\in \hat{C}^{max}$ bounds a unique disc $D'_{t}\subset \hat{D}_{j}$ .\par

If $\gamma'_{t}$ is in $\hat{C}^{non}$, then it is a meridian of $N_{l}$ (see Lemma \ref{meridian2}). Therefore, we can find a meridian disc $D''_{t}$ of $N_{l}$ which intersects the core $K_{l}$ of $N_{l}$ transversally at one point. If $\gamma'_{t}\in \hat{C}^{max}\setminus \hat{C}^{non}$, $\gamma_{t}'$ is contractible in $\partial N_{l}$ and bounds a disc $D''_{t}$ in $\partial N_{l}$. 

Define a  disc $\hat{D}'_{j}$ with boundary $\gamma_{j}$ \par
\vspace{1mm}
              $\quad\quad\quad\quad\quad\quad\quad\hat{D}'_{j}:=(\hat{D}_{j}\setminus\cup_{\gamma'_{t}\in \hat{C}^{max}} D'_{t}) \cup(\cup_{\gamma'_{t}\in \hat{C}^{max}}{D}''_{t}).$\par
\vspace{1mm}
\noindent The number $\#(\hat{D}'_{j}\cap K_{l})$ of points of $\hat{D}'_{j}\cap K_{l}$ is equal to $|\hat{C}^{non}|<I(N_{l}, N_{k})$. 

 As above, the disc $\hat{D}'_{j}$ may not be embedded (because $D''_{t}$ may be contained in some $D''_{t'}$). If necessary, we  modify the disc $\hat{D}'_{j}$ in a small neighborhood of $\partial N_{l}$ so that it becomes an embedded disc in $N_{k}$. 
 
 Therefore, we may assume that $(\hat{D}'_{j}, \partial \hat{D}'_{j})\subset ({N}_{k}, \partial N_{k})$ is an embedded disc with boundary $\gamma_{j}$. Note that $\gamma_j$ is a meridian of $N_{k}$. Then, $\hat{D}'_{j}$ is a meridian disc of $N_{k}$ with  
$\#(\hat{D}'_{j}\cap K_{l})<I(N_{l}, N_{k})$. 
However, the definition of the geometric index (see Definition \ref{index}) gives that $\#(\hat{D}'_{j}\cap K_{l})\geq I(N_{l}, N_{k})$, a contradiction. This proves the claim. 

\vspace{2mm}

In the following, we will finish the proof of the theorem.

Let $\{\gamma'_{s}\}_{s=1}^{m}$ be the circles in $\hat{C}^{non}$ and ${D}'_{s}\subset \hat{D}_{j}$ the unique disc with boundary $\gamma'_{s}$, where $m=|\hat{C}^{non}|$. From the maximality of $\gamma'_{s}$ in $\{\gamma'_{t}\}_{t\in T}$, $\{{D}'_{s}\}_{s=1}^{m}$ is a family of pairwise disjoint discs in $\hat{D}_{j}$. 

We use  Lemma \ref{meridian2} to see that each $\gamma'_{s}\in \hat{C}^{non}$ is a meridian. In addition, ${D}'_{s}$ intersects the core $K_{0}$ of $N_{0}$. The intersection ${D}'_{s}\cap \text{Int}~N_{l}$ contains at least one component intersecting $N_{0}$. 

We conclude that $\hat{D}_{j}\cap \text{Int}~N_{l}$ has at least $m$ components intersecting $N_{0}$. From the above claim, we know that $m\geq I(N_{l}, N_{k})$. Therefore, $D\cap \text{Int}~N_{l}$ has at least $I(N_{l}, N_{k})$ components intersecting $N_{0}$.
\end{proof}

\section{Stable minimal surfaces}
In this section, we consider a complete Riemannian manifold $(M, g)$ where $M:=\cup_{k=0}^{\infty} N_{k}$ is a contractible genus one $3$-manifold and $\{N_{k}\}_{k}$ is described in Theorem \ref{genus one}. Our first target is to construct a family of meridian discs with ``good" properties. Then, we consider their limit in $(M, g)$. Finally, we investigate the properness of limit surfaces.

\subsection{Stable minimal discs}

Our first object is to construct a family of meridian discs  in $(M, g)$. A result of Meeks and Yau (see  [Theorem 6.28 Page 224] in \cite{CM1})  provides us a geometric version of Loop Theorem to construct them.\par  

\begin{theorem}\label{stable}\textnormal{(See [Theorem 6.28, Page 224] in \cite{CM1})} Let $(M^{3}, g)$ be a compact Riemannian 3-manifold whose boundary is mean convex and $\gamma$ a simple closed curve in $\partial M$ which is null-homotopic in $M$. Then, $\gamma$ bounds an area-minimizing disc and any such least area disc  is embedded.
\end{theorem}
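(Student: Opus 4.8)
The plan is to follow the Meeks--Yau argument and treat existence and embeddedness as two separate issues. Since $\gamma$ is null-homotopic in $M$, the class of maps $f\colon (D^{2},\partial D^{2})\to (M,\gamma)$ whose restriction to $\partial D^{2}$ parametrizes $\gamma$ is non-empty, so one can minimize area (equivalently, after reparametrization, the Dirichlet energy among conformal competitors) within this class. First I would invoke Morrey's solution of the Plateau problem in a Riemannian manifold, together with the standard lower-semicontinuity and compactness of the energy functional, to produce a least-area map that is a conformal harmonic, hence branched minimal, immersion of the disc spanning $\gamma$. The mean convexity of $\partial M$ enters at this stage through the boundary maximum principle: a minimal disc cannot be tangent to $\partial M$ from the inside at an interior point, so the minimizer meets $\partial M$ only along $\gamma$ and otherwise lies in $\mathrm{Int}\,M$, where interior regularity applies.

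Next I would upgrade this branched immersion to a genuine immersion by excluding branch points. Interior branch points of an area-minimizing disc are ruled out by the theorems of Osserman and Gulliver--Alt, while boundary branch points are ruled out by Gulliver--Lesley; in both cases the local structure of a putative branch point permits a cut-and-reglue that strictly decreases area, contradicting minimality. After this step the image $\Omega:=f(D^{2})$ is a smooth immersed minimal disc, smooth up to its boundary, with $\partial\Omega=\gamma$.

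The hard part, and the main obstacle, will be embeddedness, which is where the Meeks--Yau exchange-and-round-off technique is used (see \cite{YM,YM1}). Suppose $\Omega$ were not embedded; then it carries a curve or arc of self-intersection. Viewing the two local sheets separately and cutting them along this locus, one re-glues them in the complementary fashion: the resulting surface spans the same boundary $\gamma$ and has the same area, but now has a non-smooth crease along the former intersection set, and rounding off this crease strictly reduces area, contradicting the least-area property. Mean convexity of $\partial M$ is again essential, since via the boundary maximum principle it guarantees that this exchange can be performed near $\partial M$ without pushing the competitor out of $M$, and that two distinct least-area solutions are either disjoint or coincide. Assembling the three steps shows that any least-area disc spanning $\gamma$ is embedded, which is the assertion; as the result is classical, in the paper it suffices to cite [Theorem 6.28] of \cite{CM1}, and the above records the structure of the underlying argument.
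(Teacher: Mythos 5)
The paper does not actually prove this statement: it is quoted verbatim as a known theorem of Meeks and Yau, with a pointer to [Theorem 6.28, Page 224] of Colding--Minicozzi's book, and that citation is all the paper relies on. Your outline --- Morrey/Douglas existence for the null-homotopic boundary curve, exclusion of interior and boundary branch points via Osserman, Gulliver--Alt and Gulliver--Lesley, the mean-convexity barrier keeping the minimizer in $\mathrm{Int}\,M$ away from $\gamma$, and embeddedness via the Meeks--Yau exchange-and-round-off (more precisely, their tower construction reducing to the two-sheeted case where the exchange strictly decreases area) --- is a faithful summary of the classical argument, so it is consistent with, and strictly more detailed than, the paper's treatment.
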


Our strategy is to apply this theorem to $(N_{k}, g|_{N_{k}})$ for any $k>0$. However, its boundary may not be mean convex. To overcome this, we find a new metric $g_{k}$ on $N_{k}$ satisfying 1) $(N_{k}, g_{k})$ is a $3$-manifold with mean convex boundary; 2) $g_{k}|_{N_{k-1}}=g|_{N_{k-1}}$. The metric $g_{k}$ is constructed as below:\par
\vspace{2mm}
\emph{Let $h(t)$ be a positive smooth function on $\mathbb{R}$ so that  $h|_{\mathbb{R}\setminus [-\epsilon, \epsilon]}=1$. Consider the function $f(x):=h(d(x, \partial N_{k}))$ and the metric $g_{k}:=f^{2}g|_{N_{k}}$. Under $(N_{k}, g_{k})$, the mean curvature $\hat{H}(x)$ of $\partial N_{k}$ is $$\hat{H}(x)=h^{-1}(0)(H(x)+2h'(0)h^{-1}(0))$$  
Choosing $\epsilon$ small enough and a function $h$ with $h(0)=2$ and $h'(0)>2\max_{x\in\partial N_{k}} |H(x)|+2$, one gets the metric $g_{k}$ which is the required candidate in the assertion.}

\vspace{2mm}
We use  Theorem \ref{stable} to find an embedded area-minimizing disc $\Omega_{k}$ in $({N}_{k}, g_{k})$, where its boundary $\gamma_{k}$ is a meridian of $N_{k}$. Since the geometric index $I(N_{0}, N_{k})>0$, the disc $\Omega_{k}$ intersects $N_{0}$.

To sum up, we construct a family of meridian discs $\{\Omega_{k}\}_{k\in \mathbb{N}}$ intersecting $N_{0}$. Since we modify the metric near $\partial N_{k}$, each $\Omega_{k}$ is not minimal in $(M, g)$, but stable minimal away from the neighborhood of $\partial N_{k}$ (near $N_{k-1}$, for example).

The surface $\Omega_{k}\cap N_{k-1}$ may be disconnected. However, it is a stable minimal lamination (see Definition \ref{lam}), where each leaf has boundary contained in $\partial N_{k-1}$. 
 \subsection{Limit surfaces}
\begin{definition}\label{converge} In a complete Riemannian $3$-manifold $(M, g)$, a sequence $\{\Sigma_{n}\}$ of immersed minimal surfaces \emph{converges smoothly with finite multiplicity} (at most $m$) to an immersed minimal surface $\Sigma$, if for each point $p$ of $\Sigma$, there is a disc neighborhood $D$ in $\Sigma$ of $p$, an integer $m$ and  a neighborhood $U$ of $D$ in $M$ (consisting of geodesics of $M$ orthogonal to $D$ and centered at the points of $D$) so that for $n$ large enough, each $\Sigma_{n}$ intersects $U$ in at most $m$ connected components. Each component is a graph over $D$ in the geodesic coordinates.  Moreover, each component converges to $D$ in $C^{2, \alpha}$-topology as $n$ goes to infinity. 

Note that in the case that each $\Sigma_{n}$ is embedded, the surface $\Sigma$ is also embedded. The \emph{multiplicity} at $p$ is equal to the number of connected component of $\Sigma_{n}\cap U$ for $n$ large enough. It remains constant on each component of $\Sigma$.  

\end{definition}

\begin{remark}\label{graph} Let us consider a family $\{\Sigma_{n}\}_{n}$ of properly embedded minimal surfaces  converging to the minimal surface $\Sigma$ with finite multiplicity. 
Fix a compact simply-connected subset $D\subset \Sigma$. Let $U$ be the tubular neighborhood of $D$ in $M$ with radius $\epsilon$ and  $\pi: U \rightarrow D$ the projection from $U $ onto $D$. It follows that the restriction $\pi|_{\Sigma_{n}\cap U}: \Sigma_{n}\cap U\rightarrow D$ is a $m$-sheeted covering map for $\epsilon$ small enough and $n$ large enough, where $m$ is the multiplicity. 

Therefore, the restriction of $\pi$ to each component of $\Sigma_{n}\cap U$ is also a covering map. Hence, since $D$ is simply-connected, it is bijective. Therefore, each component of $\Sigma_{n}\cap U$ is a normal graph over $D$.
\end{remark}
Let us recall a classical theorem about convergence with finite multiplicity.

\begin{theorem}\label{convergence}\textnormal{(See [Compactness Theorem, Page 96] of \cite{And} and  [Theorem 4.37, Page 49] of \cite{MRR})}
Let $\{\Sigma_{k}\}_{k\in \mathbb{N}}$ be a family of properly embedded minimal surfaces  in a 3-manifold $M^{3}$ satisfying (1) each $\Sigma_{k}$ intersects a given compact set $K_{0}$; (2) for any compact set $K$ in $M$, there are three constants $C_{1}=C_{1}(K)>0$, $C_{2}=C_{2}(K)>0$ and $j_{0}=j_{0}(K)\in \mathbb{N}$ such that for each $k\geq j_{0}$, it holds that
\begin{enumerate}
\item $|A_{\Sigma_{k}}|^{2}\leq C_{1}$ on $K\cap\Sigma_{k}$, where $|A_{\Sigma_{k}}|^{2}$ is the squared length of the second fundamental form of $\Sigma_{k}$
\item $\text{Area}(\Sigma_{k}\cap K)\leq C_{2}$.
\end{enumerate}
Then, after passing to a subsequence, $\Sigma_{k}$ converges to a properly embedded minimal surface with finite multiplicity in the $C^{\infty}$-topology. 
\end{theorem}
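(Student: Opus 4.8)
The plan is to run the standard curvature-bound compactness argument: use the uniform bound on the second fundamental form to represent each $\Sigma_k$ locally as graphs with uniform estimates, bootstrap these via the minimal surface equation to $C^\infty$ bounds, and then extract a limit by a diagonal argument over a compact exhaustion, using the area bound to keep the multiplicity finite. First I would fix an exhaustion $K_0\subset K_1\subset\cdots$ of $M$ by compact sets with $\bigcup_i K_i=M$. On a fixed $K_i$, hypothesis (2) provides, for all $k\geq j_0(K_i)$, the uniform bound $|A_{\Sigma_k}|^2\leq C_1(K_i)$. A standard consequence (compare the graph representation in Definition \ref{converge} and Remark \ref{graph}) is that there is a radius $r_0=r_0(C_1,K_i)>0$ so that for each $p\in\Sigma_k\cap K_i$ the component of $\Sigma_k\cap B(p,r_0)$ through $p$ is a graph over the disc of radius $r_0$ in $T_p\Sigma_k$, with gradient and Hessian bounded solely in terms of $C_1$ and the geometry of $M$ on $K_i$.

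Next I would bootstrap the regularity. Each such graph solves the minimal surface equation, a quasilinear elliptic PDE whose coefficients depend on the ambient metric; with the uniform $C^1$ and $C^2$ bounds in hand, Schauder estimates promote these to uniform $C^{l,\alpha}$ bounds for every $l$, again depending only on $K_i$ and $C_1$. Simultaneously, the area bound $\mathrm{Area}(\Sigma_k\cap K_i)\leq C_2(K_i)$, combined with the monotonicity formula for minimal surfaces, bounds the number of disjoint graph sheets meeting any fixed small ball by a constant $m=m(K_i)$. This is precisely the finite multiplicity appearing in Definition \ref{converge}.

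Then I would extract the limit. Choosing a countable dense set of centers in each $K_i$ and applying Arzel\`a--Ascoli to the uniformly $C^{l,\alpha}$-bounded sheets yields, after passing to a subsequence, $C^\infty$-convergence on $K_i$ to a minimal surface; a diagonal argument across the exhaustion produces a single subsequence converging in $C^\infty_{\mathrm{loc}}$ to a minimal surface $\Sigma$. Condition (1) guarantees $\Sigma\cap K_0\neq\emptyset$, so $\Sigma$ is nonempty. Since each $\Sigma_k$ is embedded and the convergence is smooth with multiplicity at most $m(K_i)$ on $K_i$, the limit $\Sigma$ is embedded, exactly as recorded in Definition \ref{converge}.

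The hard part will be verifying that $\Sigma$ is \emph{proper}, and here the uniform local area bound is the decisive ingredient. It prevents the graph sheets from accumulating, so that $\Sigma\cap K$ has finite area and is a closed subset of each compact $K$; this is what upgrades the smooth local limit to a globally properly embedded surface. One must check that the local limits patch consistently and that no leaf of $\Sigma$ fails to be proper within a compact set — a phenomenon ruled out precisely by combining the curvature bound (local graph structure), the area bound (finite sheet count via monotonicity), and the properness of each $\Sigma_k$.
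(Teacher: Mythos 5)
The paper does not prove this theorem at all: it is quoted directly from the cited references (Anderson's compactness theorem and [Theorem 4.37] of \cite{MRR}), so there is no internal proof to compare against. Your sketch is the standard argument behind those references --- uniform $|A|^2$ bounds give local graph representation with a uniform radius, elliptic bootstrap gives $C^{l,\alpha}$ bounds, the local area bound caps the number of sheets (either via monotonicity with curvature-controlled error terms, or more simply because each sheet through a point of $K$ already carries a definite amount of area by the graph structure), and a diagonal/Arzel\`a--Ascoli extraction produces the limit --- and it is sound; the only points deserving an explicit word are that embeddedness of the limit uses the strong maximum principle to force touching limit sheets to coincide, and that properness follows because the finite sheet count makes $\Sigma\cap K$ a finite union of compact graphs, hence compact.
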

Note that the limit surface may be disconnected. \par

Let us consider the sequence $\{\Omega_{k}\}_{k\in \mathbb{N}}$ (constructed in Section 5.1). 
The sequence $\{\Omega_{k}\}_{k\in \mathbb{N}}$ satisfies Condition (1) in Theorem \ref{convergence} (see  [Theorem 3, Page 122] in \cite{Sc}). Say, they have locally uniformly-bounded curvatures.
 However, this sequence may not satisfy Condition (2) of Theorem \ref{convergence}.
 
For example, in the Whitehead manifold,  for $k>1$, $\Omega_{k}\cap \text{Int}~N_{1}$ has at least $2^{k-1}$ components intersecting $N_{0}$ (see Theorem \ref{component}). We know that each component $(\Sigma, \partial \Sigma)\subset (N_{1}, \partial N_{1})$ of $\Omega_{k}\cap \text{Int}~N_{1}$ intersecting $N_{0}$ is a minimal surface in $(M, g)$.

 Choose $x_{0}\in \Sigma\cap N_{0}$ and $r_{0}=\frac{1}{2}\min \{r, i_{0}\}$, where $r:=\text{dist}(\partial N_{0}, \partial N_{1})$ and $i_{0}:=\inf_{x\in N_{1}}\text{Inj}_{M}(x)$. We see that the ball $B(x_{0}, r_{0})$ is in $N_{1}$. 
 We apply a result (see [Lemma 1, Page 445] in \cite{YM}) to $(N_{1}, \partial N_{1})$. Hence, it follows that  
$$\text{Area}(\Sigma)\geq \text{Area}(\Sigma\cap B(x_{0}, r_{0}))\geq C(i_{0}, r_{0}, K)$$
where $K:=\sup_{x\in N_{1}} |K_{M}(x)|$ and $K_{M}$ is the sectional curvature.

Therefore, one has that $\text{Area}(N_{1}\cap \Omega_{k})\geq 2^{k-1}C$.  That is to say, the sequence $\{\Omega_{k}\}_{k}$  does not satisfy Condition (2). 

Generally, the sequence $\{\Omega_{k}\}_{k}$ may not sub-converge with finite multiplicity. 

In the following, we consider the convergence towards a lamination. 

\begin{definition}\label{lam}A \emph{codimension one lamination} in a 3-manifold $M^{3}$ is a collection $\mathscr{L}$ of smooth disjoint surfaces (called leaves) such that $\bigcup_{L \in \mathscr{L}}L$ is closed in $M^{3}$. Moreover, for each $x\in M$ there exists an open neighborhood $U$ of $x$ and a coordinate chart $(U, \Phi)$, with $\Phi(U)\subset \mathbb{R}^{3}$ so that in these coordinates the leaves in $\mathscr{L}$ pass through $\Phi(U)$ in slices form $$\mathbb{R}^{2}\times \{t\}\cap\Phi(U).$$A \emph{minimal lamination} is a lamination whose leaves are minimal. Finally, a sequence of laminations is said to converge if the corresponding coordinate maps converge. 
\end{definition}

For example, $\mathbb{R}^{2}\times\Lambda$ is a lamination in $\mathbb{R}^{3}$, where $\Lambda$ is a closed set in $\mathbb{R}$. 

Note that any (compact) embedded surface (connected or not) is a lamination. 
In [Appendix B, Laminations] of \cite{CM}, Coding-Minicozzi described the limit of laminations with uniformly bounded curvatures. 

\begin{proposition}\label{Lam}\textnormal{(See [Proposition B.1, Page 610] in \cite{CM})} Let $(M^{3}, g)$ be a fixed 3-manifold. If $\mathscr{L}_{i}\subset B(x, 2R)\subset M$ is a sequence of minimal laminations with uniformly bounded curvatures (where each leaf has boundary contained in $\partial B(0, 2R)$), then a subsequence, $\mathscr{L}_{j}$, converges in the $C^{\alpha}$-topology for any $\alpha<1$ to a  lamination $\mathscr{L}$ in $B(x, R)$ with minimal leaves. 
\end{proposition}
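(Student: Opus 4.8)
The plan is to upgrade the uniform curvature bound into uniform local graphical control, apply elliptic regularity leaf-by-leaf, and then organize the resulting graphs into a lamination chart using disjointness of the leaves. First I would fix the ambient geometry: since $\overline{B(x,2R)}$ is compact, the sectional curvature of $M$ and the injectivity radius are uniformly bounded there, and by hypothesis $|A_{\mathscr{L}_i}| \le C$ uniformly for the leaves. Working in geodesic normal coordinates, I would argue that there is a radius $r_{0}=r_{0}(C,M,R)>0$ such that near any point $p$ of any leaf $L$ of any $\mathscr{L}_i$, the leaf is the normal graph over the disk $D_{r_0}\subset T_pL$ of a function $u$ with $|\nabla u|\le 1$ and $|u|\le r_0/10$, with second derivatives controlled by $C$. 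This is a standard consequence of the second fundamental form bound. Since each leaf is minimal, $u$ solves the minimal surface equation, a quasilinear uniformly elliptic PDE; the $C^{1}$ bounds just obtained let Schauder estimates promote $u$ to uniform $C^{k,\alpha}$ bounds on $D_{r_0/2}$ for every $k$. Hence the family of all leaf-graphs is precompact in $C^{k}_{\mathrm{loc}}$, and along a subsequence any leaf-pieces through converging points converge smoothly to a minimal graph.

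The key step is to build the lamination charts. I would set $\mathscr{C}_i:=\bigcup_{L\in\mathscr{L}_i}L$, a closed subset of $B(x,2R)$, and pass to a subsequence along which the $\mathscr{C}_i$ converge in the local Hausdorff sense to a closed set $\mathscr{C}\subset B(x,R)$. Fix $p\in\mathscr{C}$ and points $p_i\in\mathscr{C}_i$ with $p_i\to p$; after rotating coordinates and passing to a further subsequence, the leaf-graph through $p_i$ converges to a minimal graph through $p$ whose tangent plane at $p$ is horizontal. The crucial claim is that for $\rho=\rho(r_0,C)$ small, every leaf of $\mathscr{L}_i$ meeting the cylinder $D_\rho\times(-\rho,\rho)$ is itself a graph over $D_\rho$ with small gradient. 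This is exactly where disjointness of the leaves enters: a leaf through a nearby point with a strongly tilted tangent plane would, by its uniform-size graphical neighborhood, have to cross the nearly-horizontal graph through $p_i$, contradicting disjointness. Consequently the leaves meeting the cylinder form a family of uniformly $C^{2,\alpha}$-bounded graphs totally ordered by height over $D_\rho$, and straightening these graphs produces a coordinate chart in which the leaves are horizontal slices.

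Finally I would pass to the limit inside each chart and globalize. Within a chart the ordered graph functions, whose heights range over a closed subset of $\mathbb{R}$, converge in $C^{k}$ along the subsequence; the limit is again a family of disjoint (or coincident) minimal graphs whose union is closed, that is, a lamination, and the straightening diffeomorphisms converge in $C^\alpha$. The regularity is only $C^\alpha$ because the dependence of a leaf on its transverse height need not be better than Lipschitz, even though each individual leaf is smooth. Covering $B(x,R)$ by countably many such cylinders centered at a dense set of points and diagonalizing yields a single subsequence $\mathscr{L}_j$ for which all local laminations converge; the local limits agree on overlaps since each leaf is locally determined by a point together with its tangent plane, so they glue to a global minimal lamination $\mathscr{L}$ in $B(x,R)$.

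The main obstacle is the crucial claim in the second paragraph: converting the pointwise curvature bound together with leaf disjointness into the uniform statement that all leaves meeting a small cylinder are graphs over one common base disk. The curvature bound by itself only gives each leaf its own graphical neighborhood over its own tangent plane; it is disjointness that pins the tangent planes of nearby leaves close to a common plane and prevents the graphs from crossing, and this is precisely what forces the limit to carry a genuine product (lamination) structure rather than being merely a closed minimal set. Once this local rigidity is in hand, the compactness and gluing are routine applications of elliptic estimates and a diagonal argument.
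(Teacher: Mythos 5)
The paper gives no proof of this proposition: it is quoted directly from Colding--Minicozzi \cite{CM} (Appendix B), and your sketch reproduces the standard argument from that reference --- uniform curvature bounds give uniform graphical charts with elliptic bootstrapping, disjointness of the leaves forces nearby sheets to be ordered graphs over a common disk (the ``crucial claim'' you correctly isolate), and Arzel\`a--Ascoli plus a diagonal argument produce the limit lamination with only Lipschitz transverse regularity. One quantitative point to tighten: for the crossing argument that pins the tangent planes of nearby leaves to a common plane, you need the graph functions over the tangent planes to have gradient bounded by a small $\epsilon_{0}$ (obtained by shrinking $r_{0}$ against the curvature bound $C$), not merely by $1$ as written, since a sheet tilted by $45^{\circ}$ with gradient bound $1$ need not cross the reference graph.
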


\begin{exam} Let $(\mathbb{R}^3, g_{Eucl})$ be the $3$-dimensional Euclidean space and $\mathscr{L}_n:=\mathbb{R}^2\times \{\frac{i}{n}\}^{n}_{i=1}\cap B(0, 2)$, where $B(0, 2):=\{x\in \mathbb{R}^3 |~|x|\leq 2\}$.

Each set $\mathscr{L}_n$ is a lamination whose each leaf is stable minimal . Furthermore, the curvature vanishes on each leaf of $\mathscr{L}_n$. Since $\lim_{n\rightarrow \infty}\text{Area} (\mathscr{L}_n)=\infty$, it follows that the sequence $\{\mathscr{L}_n\}_n$ does not converge with finite multiplicity.  However,  the sequence $\{\mathscr{L}_n\}$ converges to the lamination, $\mathscr{L}:=\mathbb{R}^2\times [0,1] \cap B(0,2)$. 
\end{exam}

The convergence of the laminations is equivalent  to the convergence of the corresponding coordinate maps. Thus, the coordinate maps of $\mathscr{L}$ (in Proposition \ref{Lam}) are both in $C^{\alpha}$ (since $\{\mathscr{L}_j\}$ sub-converges in $C^{\alpha}$). Namely, each leaf is a $C^{\alpha}$-surface. (Such a lamination is called a Lipschitz lamination). In addition, each leaf of $\mathscr{L}$ satisfies minimal hypersurface equation. The regularity theory of elliptic equations (see \cite{GT}) implies that each leaf is smooth ($C^\infty$).

 We apply this proposition to the sequence $\{\Omega_{k}\}_{k}$ and have that

\begin{theorem}\label{limits}The sequence $\{\Omega_{k}\}_{k\in \mathbb{N}}$ sub-converges to a lamination $\mathscr{L}=\amalg_{t\in \Lambda}L_{t}$. Moreover, each $L_{t}$ is a complete minimal surface.
\end{theorem}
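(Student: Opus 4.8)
The plan is to combine Schoen's curvature estimate with the Colding--Minicozzi lamination compactness [Proposition \ref{Lam}] via an exhaustion argument, and then to verify completeness of the resulting leaves as a separate step. Recall that each $\Omega_{k}$ is an embedded disc, area-minimizing (hence stable minimal) for $g_{k}$ in $N_{k}$, and that $g_{k}=g$ on $N_{k-1}$. Since the boundary $\gamma_{k}=\partial\Omega_{k}\subset\partial N_{k}$ escapes to infinity as $k\to\infty$ (the $N_{k}$ exhaust $M$), any fixed compact subset of $M$ is contained in $N_{k-1}$ for $k$ large, so there $\Omega_{k}$ is genuinely stable minimal for the original metric $g$. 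As already recorded after Theorem \ref{convergence}, Schoen's estimate [Theorem 3, \cite{Sc}] then gives that $\{\Omega_{k}\}$ has locally uniformly bounded second fundamental form; in particular each $\Omega_{k}$, viewed through its connected components, is a minimal lamination in the sense of Definition \ref{lam}.

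First I would localize. Fix $x\in M$ and $R>0$ with $\overline{B(x,2R)}\subset M$. For $k$ large the boundary $\gamma_{k}$ lies outside $B(x,2R)$, so $\Omega_{k}\cap B(x,2R)$ is a minimal lamination in $B(x,2R)$ whose leaves have boundary contained in $\partial B(x,2R)$ and whose curvatures are uniformly bounded by the previous paragraph. Proposition \ref{Lam} then produces a subsequence converging in the $C^{\alpha}$-topology, for every $\alpha<1$, to a lamination on $B(x,R)$ with minimal leaves; elliptic regularity for the minimal surface equation upgrades this to smooth convergence of the individual leaves, so the limit leaves are smooth and minimal.

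Next I would globalize by exhaustion and diagonalization. I choose a countable family of balls $B(x_{i},R_{i})$ exhausting $M$, apply the previous step on $B(x_{1},R_{1})$, pass to a further subsequence on $B(x_{2},R_{2})$, and so on; a diagonal subsequence of $\{\Omega_{k}\}$ then converges on every $B(x_{i},R_{i})$. As it is one and the same subsequence, the local limit laminations agree on overlaps and patch to a global lamination $\mathscr{L}=\amalg_{t\in\Lambda}L_{t}$ of $M$ with smooth minimal leaves, and $\bigcup_{t\in\Lambda}L_{t}$ is closed in $M$ by construction.

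The last point is completeness of each leaf $L_{t}$, which I expect to be the main obstacle, since a leaf need not be proper and may even be dense in $\mathscr{L}$. The facts I would use are that $\mathscr{L}$ is closed in the complete manifold $(M,g)$ and that $\mathscr{L}$ has no boundary in $M$, the boundaries $\gamma_{k}$ having escaped to infinity. In a lamination chart the leaves are flat slices, so the intrinsic distance on a leaf is locally bi-Lipschitz to the restriction of the ambient distance. Given a Cauchy sequence in $L_{t}$, it is therefore Cauchy in $M$ and converges to some $p\in\overline{L_{t}}\subset\mathscr{L}$; a short intrinsic Cauchy tail stays inside a single chart and a single slice, so the limit lies on that slice and hence $p\in L_{t}$, with convergence taking place inside $L_{t}$. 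Thus each $L_{t}$ is a complete boundaryless minimal surface, which is the assertion.
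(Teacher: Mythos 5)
Your proposal is correct and follows essentially the same route as the paper: Schoen's curvature estimate on the regions where $g_{k}=g$, Colding--Minicozzi's lamination compactness [Proposition \ref{Lam}] applied locally, and a diagonal argument over an exhaustion (the paper uses the tori $N_{k}$ where you use balls, which is immaterial). Your closing paragraph verifying completeness of the leaves is a detail the paper simply asserts, and your argument for it is sound.
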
 

\begin{proof}  
For any $j>k$, $\Omega_{j}\cap N_{k}$ is  disconnected but is a stable minimal lamination. Each leaf has boundary contained in $\partial N_{k}$. 

A result of Schoen (see [Theorem 3, Page 122] in \cite{Sc})  gives a constant $C(N_{k-1})$, depending on $N_{k}$ and $g$, so that for any $j>k$, $$|A_{\Omega_{j}}|^2\leq C(N_{k-1})~ \text{on} ~N_{k-1}$$

Therefore,  $\{\Omega_{j}\cap N_{2}\}^\infty_{j=4}$ is a sequence of minimal laminations with uniformly bounded curvature where each leaf has boundary contained in $\partial N_{2}$. We use Proposition \ref{Lam} to extract a subsequence converging to a minimal lamination in $N_{1}$. The boundary of the lamination is contained in $\partial N_{1}$. 

We repeat the argument on each $N_{k}$. A diagonal argument allows us  to find a subsequence of $\{\Omega_{k}\}$ converging to a lamination $\mathscr{L}$. Each leaf is a complete minimal surface.     
\end{proof}

In addition, we have
 \begin{theorem}\label{stability} Each leaf in $\mathscr{L}$ is stable minimal. 
 \end{theorem}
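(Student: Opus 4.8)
The plan is to transfer stability from the approximating discs $\{\Omega_k\}$ to each leaf $L_t$ of the limit lamination $\mathscr{L}$, using the $C^2$-convergence that underlies Theorem \ref{limits}. Recall that away from the modified collar near $\partial N_k$, each $\Omega_k$ is genuinely stable minimal for the original metric $g$, and the second variation / stability inequality
\[
\int_{\Omega_k}\bigl(|\nabla\varphi|^2-(\mathrm{Ric}(\nu,\nu)+|A_{\Omega_k}|^2)\varphi^2\bigr)\,dv\ge 0
\]
holds for all $\varphi\in C_c^\infty$ supported in the interior region $N_{k-1}$. Since for any fixed $N_{k_0}$ the tails $\Omega_j\cap N_{k_0}$ ($j$ large) are honestly stable for $g$, I can pass this inequality to the limit.

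First I would fix an arbitrary leaf $L_t$ and an arbitrary compactly supported test function $\varphi\in C_c^\infty(L_t)$; its support lies in some $N_{k_0}$. By the lamination convergence of Proposition \ref{Lam} and the local graphical description in Definition \ref{converge} and Remark \ref{graph}, near the support of $\varphi$ each $L_t$ is a $C^{2,\alpha}$-limit of graphical pieces of $\Omega_j\cap N_{k_0}$ over a neighborhood $D\subset L_t$. Pulling $\varphi$ back to these graphs gives a sequence of admissible test functions $\varphi_j$ on $\Omega_j$, supported in $N_{k_0}\subset N_{j-1}$ for $j$ large, hence in the region where $\Omega_j$ is stable for $g$. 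Then I would write the stability inequality for each $\Omega_j$ with test function $\varphi_j$ and let $j\to\infty$. The $C^{2,\alpha}$-convergence of the graphs ensures that the induced metrics, gradients, second fundamental forms $|A_{\Omega_j}|^2$, and the ambient term $\mathrm{Ric}(\nu,\nu)$ all converge uniformly on the support, so each integral converges to the corresponding integral over $L_t$. The limiting inequality is exactly the stability inequality for $L_t$ with test function $\varphi$.

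Since $t$ and $\varphi$ were arbitrary, this shows every leaf is stable, completing the proof. The only point requiring care — and the step I expect to be the main obstacle — is the bookkeeping that the test functions $\varphi_j$ really are supported in the stable region of $\Omega_j$ for all large $j$. This is where it matters that the metric modification was confined to an $\epsilon$-collar of $\partial N_k$: for fixed $k_0$ and $j$ large the collar near $\partial N_j$ is disjoint from $N_{k_0}$, so the pulled-back test functions never see the non-minimal part. Once this is observed, the convergence of each term in the stability quadratic form is routine from the $C^{2,\alpha}$ graphical convergence, and no compactness beyond Proposition \ref{Lam} is needed.
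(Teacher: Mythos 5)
Your overall strategy --- pass the second--variation inequality from the stable pieces $\Omega_j\cap N_{k_0}$ to the limit --- is the right idea for part of the argument, and your observation about the metric modification being confined to a collar of $\partial N_j$ is correct and is indeed used in the paper. But the step you dismiss as routine, namely ``pulling $\varphi$ back to these graphs gives a sequence of admissible test functions $\varphi_j$ on $\Omega_j$,'' is exactly where the difficulty lives, and as written it has a genuine gap. The local graphical description of Remark \ref{graph} only holds over a \emph{simply connected} piece $D$ of a leaf, and only gives finitely many sheets there. If $L_t$ is a \emph{limit leaf} of $\mathscr{L}$ (other leaves, or other sheets of the same $\Omega_j$, accumulate on it), then over the support of a test function the portion of $\Omega_j$ near $L_t$ need not be a disjoint union of single-valued graphs over $L_t$: sheets can be connected to one another (spiraling), and different sheets converge to \emph{different} leaves of $\mathscr{L}$. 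There is then no well-defined lift $\varphi_j$ supported on ``the copy of $\operatorname{supp}\varphi$ inside $\Omega_j$,'' and if you instead restrict an ambient extension of $\varphi$ to $\Omega_j$, the limiting inequality is a sum over several leaves, which does not yield stability of $L_t$ alone. This is precisely why the stability of limit leaves is a nontrivial theorem; the paper handles this case by citing Meeks--P\'erez--Rosenberg. You also do not address $1$-sided leaves, for which the stability inequality concerns sections of the (nontrivial) normal bundle and a compactly supported function on $L_t$ need not lift at all; the paper treats this by passing to the $2$-sided double cover (Lemma \ref{cover}).

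For leaves that are \emph{not} limit leaves, your argument is essentially the paper's Case (II), except that even there the paper must first \emph{prove} that the convergence has multiplicity one (the $I^{Top}/I^{Med}/I^{Bot}$ connectedness argument showing the projection $\pi_k\colon\Sigma_k\to B^{L_t}(p,r)$ is injective); only after that is the lift of a test function, or equivalently the $C^{2,\alpha}$ graphical convergence of a single sheet, available. So to repair your proof you would need to (i) split into the limit-leaf and non-limit-leaf cases, invoking the stable limit leaf theorem for the former, (ii) prove multiplicity one in the latter case rather than assume it, and (iii) treat $1$-sided leaves separately via the double cover.
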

 
 \begin{theorem}\label{non-compact} If $(M, g)$ has positive scalar curvature, each leaf in $\mathscr{L}$ is non-compact.  
 \end{theorem}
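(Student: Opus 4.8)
The plan is to argue by contradiction: suppose some leaf $\Sigma=L_{t_{0}}$ of $\mathscr{L}$ is compact. Since $M$ is contractible it is orientable, so $\Sigma$ is a closed two-sided surface, and by Theorem \ref{stability} it is stable minimal. The first step is to extract a curvature constraint from stability. Inserting the test function $f\equiv 1$ into the stability inequality gives $\int_{\Sigma}(|A|^{2}+\mathrm{Ric}(\nu,\nu))\,dv\leq 0$. I would then rewrite $\mathrm{Ric}(\nu,\nu)$ through the Gauss equation for a minimal surface, namely $\mathrm{Ric}(\nu,\nu)=\tfrac12\kappa-K_{\Sigma}-\tfrac12|A|^{2}$, where $\kappa$ is the scalar curvature of $M$ and $K_{\Sigma}$ the Gauss curvature of $\Sigma$, and apply Gauss--Bonnet $\int_{\Sigma}K_{\Sigma}\,dv=2\pi\chi(\Sigma)$ to obtain
$$2\pi\chi(\Sigma)\ \geq\ \frac12\int_{\Sigma}|A|^{2}\,dv+\frac12\int_{\Sigma}\kappa\,dv.$$
Because $\kappa>0$ everywhere (positive scalar curvature), the right-hand side is strictly positive, so $\chi(\Sigma)>0$ and $\Sigma$ is a $2$-sphere. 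This Schoen--Yau/Fischer-Colbrie--Schoen computation is exactly where the positivity hypothesis enters.

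Next I would use the topology of $M$. As $M$ is contractible we have $\pi_{2}(M)=0$, and by the Poincar\'e conjecture $M$ is irreducible (see Subsection 2.1), so the embedded sphere $\Sigma$ bounds a closed $3$-ball; in particular $\Sigma$ is simply connected. The final contradiction comes from examining how the discs $\Omega_{k}$ approach $\Sigma$. Since the $\Omega_{k}$ have uniformly bounded curvature (Schoen's estimate, as used for Theorem \ref{limits}) and sub-converge to $\mathscr{L}$, near the compact leaf $\Sigma$ this convergence upgrades to smooth convergence with finite multiplicity; in a tubular neighbourhood $U$ of $\Sigma$ the projection of $\Omega_{k}\cap U$ onto $\Sigma$ is a finite covering whose sheets have no boundary, because $\partial\Omega_{k}=\gamma_{k}\subset\partial N_{k}$ lies far from $\Sigma$. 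As $\Sigma\cong\mathbb{S}^{2}$ is simply connected, this covering is trivial, so by Remark \ref{graph} each component of $\Omega_{k}\cap U$ is a global graph over $\Sigma$, i.e.\ a closed surface diffeomorphic to $\mathbb{S}^{2}$ contained in $\Omega_{k}$. But $\Omega_{k}$ is connected with non-empty boundary and therefore contains no closed component, a contradiction.

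The step I expect to be the main obstacle is the last one: justifying that the $C^{\alpha}$ lamination convergence of Proposition \ref{Lam} genuinely produces, near the compact leaf, \emph{closed} graphical sheets of $\Omega_{k}$ over all of $\Sigma$, rather than graphical pieces that escape through the lateral boundary of $U$. This requires upgrading the lamination convergence to smooth graphical convergence via elliptic estimates for minimal graphs with bounded second fundamental form, together with the verification that some component of $\Omega_{k}$ actually limits onto the entire leaf $\Sigma$. An alternative finish that avoids the approximating discs is a strong maximum principle argument applied to the minimal lamination on the ball bounded by $\Sigma$, ruling out a minimal $2$-sphere there; but this variant needs extra care when $\Sigma$ happens to be an isolated leaf.
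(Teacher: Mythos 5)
Your overall strategy coincides with the paper's: assume a compact leaf, show it must be a $2$-sphere bounding a ball (positivity of scalar curvature plus irreducibility of $M$), and then derive a contradiction from the way the discs $\Omega_k$ accumulate on it. The first half is fine (your stability-plus-Gauss--Bonnet computation replaces the paper's citation of Schoen--Yau and its separate exclusion of $\mathbb{RP}^2$; note only that ``$M$ orientable $\Rightarrow$ $\Sigma$ two-sided'' is not automatic for embedded surfaces --- the paper rules out the one-sided case by an intersection-number argument using contractibility of $M$, and you should do likewise). The problem is in the second half, and it is exactly the obstacle you flag: you assert that near the compact leaf the lamination convergence ``upgrades to smooth convergence with finite multiplicity,'' but bounded second fundamental form alone does not give this --- a uniform local \emph{area} bound is required, and the absence of such a bound for $\{\Omega_k\}$ is precisely why the paper works with laminations in the first place (Subsection 5.2). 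Moreover your justification that the graphical sheets over $\Sigma$ ``have no boundary because $\partial\Omega_k\subset\partial N_k$ lies far from $\Sigma$'' is incorrect as stated: a component of $\Omega_k\cap U$ can perfectly well have boundary on $\partial U$ (the lateral exit you worry about is through the two parallel copies of $\Sigma$ forming $\partial U$), so nothing yet forces it to close up.

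The paper closes this gap in two steps that your proposal is missing. First (Step 2 of the paper's proof), it uses the fact that $L_t\cong\mathbb{S}^2$ bounds a ball $B$ to produce an area bound: any subdisc $D_i\subset\Omega_k$ with $\partial D_i\subset\partial N_{2\epsilon}(B)$ can be compared, via the area-minimizing property of $\Omega_k$ in $(N_k,g_k)$, with a disc on the sphere $\partial N_{2\epsilon}(B)$, yielding $\mathrm{Area}(\Sigma_k^j,g)\leq\mathrm{Area}(\partial N_{2\epsilon}(B),g)$ uniformly in $k$ and $j$. This is where the minimizing property (not mere stability) and the topology of the ball are used, and it is what legitimately produces smooth subconvergence with finite multiplicity via Theorem \ref{convergence}. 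Second (Step 3), instead of a covering-space argument, the paper runs a separation argument: the limit of the components $\Sigma_k^{j_k}$ through points $p_k\to p\in L_t$ is a properly embedded surface having $L_t$ as a component, so for small $\delta$ the connected set $\Sigma_k^{j_k}$ must lie in the disjoint union $N_\delta(L_t)\amalg(N_{2\epsilon}(L_t)\setminus N_{2\delta}(L_t))$ while meeting both pieces (it passes through $p_k$ near $L_t$ and has boundary on $\partial N_{2\epsilon}(L_t)$), a contradiction. Without supplying the area estimate of Step 2 and an argument of the Step 3 type, your final step does not go through.
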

 
We will prove these two theorems in Appendix B. 

\vspace{2mm}

To sum up, the sequence $\{\Omega_{k}\}$ sub-converges to a minimal lamination $\mathscr{L}$. If $(M, g)$ has positive scalar curvature, each leaf in $\mathscr{L}$ is a complete (non-compact) stable minimal surface. 
 
\subsection{Properness}\label{3.2}Theorem \ref{limits} and Theorem \ref{stability} imply that there are some complete embedded stable minimal surfaces in $(M, g)$. It is natural to ask whether such minimal surfaces are proper. In this part, we show that in any complete 3-manifold with positive scalar curvature,  the integral of the scalar curvature along any complete (non-compact) embedded stable minimal surface is bounded by a universal constant, which implies that such a surface is proper. 

\begin{theorem} \label{proper}
Let $(M, g)$ be a complete oriented 3-manifold with positive scalar curvature $\kappa(x)$. Assume that $\Sigma$ is a complete (non-compact) stable minimal surface in $M$. Then, one has,$$\int_{\Sigma}\kappa(x)dv\leq 2\pi,$$where $dv$ is the volume form of the induced metric $ds^{2}$ over $\Sigma$. Moreover, if $\Sigma$ is an embedded surface, then $\Sigma$ is properly embedded.
\end{theorem}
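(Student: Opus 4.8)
The plan is to reduce the statement to the classical Cohn--Vossen inequality by performing a conformal change of the induced metric that converts the stability of $\Sigma$ into nonnegativity of its Gauss curvature. The two structural inputs are already recorded in the discussion preceding the statement: by Schoen--Yau \cite{SY} the surface $\Sigma$ is diffeomorphic to $\mathbb{R}^2$, so that $\chi(\Sigma)=1$, and by the Fischer-Colbrie--Schoen characterization of stability \cite{FS} there is a positive function $w$ on $\Sigma$ solving the Jacobi equation $\Delta_\Sigma w+(|A|^2+\mathrm{Ric}(\nu,\nu))w=0$, where $\nu$ is a unit normal and $A$ the second fundamental form. I would work with the conformal metric $\widehat{ds}^2:=w^2\,ds^2$ and set $\varphi:=\log w$.

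First I would record the pointwise identity coming from the Gauss equation. Decomposing the scalar curvature of $M$ along an adapted frame $\{e_1,e_2,\nu\}$ gives $\mathrm{Ric}(\nu,\nu)+\tfrac12|A|^2=\kappa-K_\Sigma$, where $K_\Sigma$ is the Gauss curvature of $\Sigma$ and $\kappa$ is the scalar curvature of $M$ in the paper's convention (the sum of the sectional curvatures, i.e.\ half of the Riemannian trace); this normalization is precisely what produces the constant $2\pi$ rather than $4\pi$. Using the Jacobi equation in the form $\Delta_\Sigma\varphi=-(|A|^2+\mathrm{Ric}(\nu,\nu))-|\nabla\varphi|^2$ together with the conformal transformation law $\widehat K=w^{-2}(K_\Sigma-\Delta_\Sigma\varphi)$ for the Gauss curvature of $\widehat{ds}^2$, I obtain
\[
\widehat K=w^{-2}\Big(\kappa+\tfrac12|A|^2+|\nabla\varphi|^2\Big)\ge w^{-2}\kappa>0 .
\]
In particular $(\Sigma,\widehat{ds}^2)$ has positive Gauss curvature, and the total curvature is conformally invariant, since $\widehat K\,d\widehat v=(K_\Sigma-\Delta_\Sigma\varphi)\,dv=\big(\kappa+\tfrac12|A|^2+|\nabla\varphi|^2\big)\,dv$.

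The key analytic input --- and the main obstacle --- is that the conformal metric $\widehat{ds}^2$ is \emph{complete}; this is exactly the theorem of Fischer-Colbrie \cite{F} that a complete stable minimal surface is conformal to a complete surface of positive curvature, and it is not at all automatic from the construction of $w$. Granting completeness, the Cohn--Vossen inequality \cite{CV} applies to $(\Sigma,\widehat{ds}^2)$ and yields $\int_\Sigma \widehat K\,d\widehat v\le 2\pi\,\chi(\Sigma)=2\pi$. Combining this with the conformal identity and discarding the nonnegative terms $\tfrac12|A|^2$ and $|\nabla\varphi|^2$ gives
\[
\int_\Sigma \kappa\,dv\le \int_\Sigma\Big(\kappa+\tfrac12|A|^2+|\nabla\varphi|^2\Big)\,dv=\int_\Sigma \widehat K\,d\widehat v\le 2\pi,
\]
which is the asserted inequality; as a by-product the same bound forces $\int_\Sigma|A|^2\le 4\pi$, so $\Sigma$ has finite total curvature.

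For the final assertion I would argue by contradiction: suppose the embedded surface $\Sigma$ is not proper. Then some compact set has noncompact preimage, so there are points $x_n\in\Sigma$ leaving every compact subset of $\Sigma$ yet accumulating at some $p\in M$. Fixing a small ball $B(p,r)$ inside the injectivity radius and small enough for the monotonicity formula, embeddedness produces infinitely many distinct components of $\Sigma\cap B(p,r)$, none of which can be closed; each meets a point near $p$ and has boundary on $\partial B(p,r)$, hence carries area at least a fixed $c_0 r^2>0$ by the Meeks--Yau lower bound \cite{YM}. Since $\kappa\ge\kappa_0>0$ on the compact set $\overline{B(p,r)}$, each such component contributes at least $\kappa_0 c_0 r^2$ to $\int_\Sigma\kappa\,dv$, so that integral is infinite, contradicting the bound just proved. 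Hence $\Sigma$ is proper. The two points that will require the most care are importing the completeness of $\widehat{ds}^2$ from \cite{F} and verifying that the accumulation genuinely yields infinitely many area-definite, non-closed sheets.
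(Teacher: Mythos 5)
Your argument is correct and follows essentially the same route as the paper: Schoen--Yau for $\Sigma\cong\mathbb{R}^{2}$, the Fischer-Colbrie--Schoen positive Jacobi field, the conformal change $u^{2}ds^{2}$ made complete with nonnegative curvature by Fischer-Colbrie, and Cohn--Vossen, the only cosmetic difference being that the paper integrates $L(u)u^{-1}$ over exhausting geodesic balls rather than using your pointwise identity for $\widehat{K}\,d\widehat{v}$. For properness the paper sidesteps the one point you flagged (producing infinitely many components) by instead choosing points $p_{k}\in\Sigma\cap B(p,r/2)$ that go to infinity \emph{intrinsically} and using pairwise disjoint intrinsic geodesic discs $B^{\Sigma}(p_{k},r_{0}/2)\subset B(p,r)$, each carrying a definite amount of area, which is the cleaner way to make your final step rigorous.
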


\begin{proof} By [Theorem 2, Page 211] in \cite{SY}, $\Sigma$ is conformally diffeomorphic to $\mathbb{R}^{2}$.\par

Let us consider the Jacobi operator $L:=\Delta_{\Sigma}- K_{\Sigma}+(\kappa(x)+\frac{1}{2}|A|^{2})$, where $K_{\Sigma}$ is the Gaussian curvature of the metric $ds^{2}$ and $\Delta_{\Sigma}$ is the Laplace-Beltrami operator of $(\Sigma, ds^{2})$. From [Theorem 1, Page 201] in \cite{FS}, there exists a positive function $u$ on $\Sigma$ satisfying $L(u)=0$, since $\Sigma$ is a stable minimal surface. \par

Consider the metric $d\tilde{s}^{2}:=u^{2}ds^{2}$. Let $\tilde{K}_{\Sigma}$ be its sectional curvature and $d\tilde{v}$ its volume form. We have that 
$$\tilde{K}_{\Sigma}=u^{-2}(K_{\Sigma}-\Delta_{\Sigma}~{log}~ u)~~~~\text{and}~~~~~~~~d\tilde{v}=u^{2}dv.$$
By Fischer-Colbrie's work (see [Theorem 1, Page 126] in \cite{F}), $(\Sigma, d\tilde{s}^{2})$ is a complete surface with non-negative sectional curvature $\tilde{K}_{\Sigma} \geq 0$.  Cohn-Vossen's inequality \cite{CV} shows that 

 $$\int_{\Sigma}\tilde{K}_{\Sigma}d\tilde{v}\leq 2\pi\chi(\Sigma),$$where $\chi(\Sigma)$ is the Euler characteristic of $\Sigma$.\par
 Since $L(u)=0$, one has that $\int_{B^{\Sigma}(0, R)} L(u)u^{-1}dv=0$, where $B^{\Sigma}(0,R)$ is the geodesic ball in $(\Sigma, ds^{2})$ centered at $0\in \Sigma$ with radius $R$. We deduce that
\begin{equation*}
	\begin{split}
	\int_{B^{\Sigma}(0, R)}\kappa(x)+\frac{1}{2}|A|^{2}dv &=\int_{B^{\Sigma}(0, R)}(K_{\Sigma}-u^{-1}\Delta_{\Sigma}~u)dv\\
	&=\int_{B^{\Sigma}(0, R)} K_{\Sigma}-(\Delta_{\Sigma}~log~u+u^{-2}|\nabla u|)dv\\
	&\leq \int_{B^{\Sigma}(0, R)}u^{-2}(K_{\Sigma}-\Delta_{\Sigma} log~u)u^{2}dv\\
	&=\int_{B^{\Sigma}(0, R))}\tilde{K}_{\Sigma}d\tilde{v}\\
	&\leq \int_{\Sigma} \tilde{K}_{\Sigma}d\tilde{v}
	\end{split}
	\end{equation*}
Since $\Sigma$ is diffeomorphic to $\mathbb{R}^{2}$, then $\chi(\Sigma)=1$. Combining these two inequalities above and taking $R\rightarrow \infty$, we have that, 
$$\int_{\Sigma}\kappa(x)+\frac{1}{2}|A|^{2} dv\leq 2\pi.$$	

\vspace{2mm}

Suppose that $\Sigma$ is not properly embedded.  There is an accumulation point $p$ of $\Sigma$ so that the set $B(p, r/2)\cap \Sigma$ is a non-compact closed set in $\Sigma$. Namely, it is unbounded in $(\Sigma, ds^2)$.
Hence, there is a sequence $\{p_{k}\}$ of points in $\Sigma\cap B(p, r/2)$ going to infinity in $(\Sigma, ds^2)$. 

\vspace{1mm}

We may assume that the geodesic discs $\{B^{\Sigma}(p_k, r/2)\}_k$ in $\Sigma$ are disjoint.

Define two constants $r_{0}:=\frac{1}{2}\min\{r, i_{0}\}$ and $K:=\sup_{x\in B(p, r)}|K_{M}(x)|$ where $i_{0}:=\inf_{x\in B(p, r)} \text{Inj}_{M}(x)$ and $K_{M}$ is the sectional curvature. The geodesic ball $B^{\Sigma}(p_{k}, r_{0}/2)$ is in $B(p, r)$. 

A result (see [Theorem 3, Appendix, Page 139] of \cite{FKR})  shows that
  $$\text{Area}(B^{\Sigma}(p_{k}, r_{0}/2))\geq C(i_{0}, r_{0}, K).$$ 
 This leads to a contradiction as follows:
\begin{equation*}
\begin{split}
2\pi &\geq \int_{\Sigma}\kappa(x)dv \geq \int_{B(p, r)\cap \Sigma} \kappa(x)dv\\
      &\geq \sum_{k} \int_{B^{\Sigma}(p_{k},r_{0}/2)} \kappa(x)\\
      &\geq \inf_{x\in B(p, r)} \kappa(x)\cdot\sum_{k} \text{Area}(B^{\Sigma}(p_{k}, r_{0}/2))\\ 
      &\geq \inf_{x\in B(p, r)} \kappa(x)\cdot\sum_{k} C= \infty
      \end{split}
\end{equation*}
\end{proof}

\begin{remark} From Theorem \ref{limits} and Theorem \ref{stability}, $\{\Omega_{k}\}_{k}$ sub-converges to a lamination $\mathscr{L}=\amalg_{t\in \Lambda}L_{t}$, where each $L_{t}$ is a complete embedded  minimal surface. If $(M, g)$ has positive scalar curvature, we use Theorem \ref{non-compact} and  Theorem \ref{proper} to have that each $L_{t}$ is stable minimal and properly embedded.
\end{remark}

\begin{corollary} Let (M, g) be a contractible complete 3-manifold with uniformly positive scalar curvature ($\inf_{x\in M}\kappa(x)>0$). Then $M$ is homeomorphic to $\mathbb{R}^{3}$
\end{corollary}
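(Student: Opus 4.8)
The plan is to reduce the statement to the criterion of Stallings recalled in Subsection 2.1: a contractible, irreducible open $3$-manifold that is simply connected at infinity is homeomorphic to $\mathbb{R}^3$. Since $M$ is contractible it is irreducible by the Poincar\'e conjecture \cite{P1,P2,P3}, so it suffices to prove that $M$ is simply connected at infinity. I argue by contradiction and assume $M$ is \emph{not} simply connected at infinity. I would first write $M$ as a monotone union of compact connected handlebodies $\{H_k\}$ with $H_k\subset \mathrm{Int}\,H_{k+1}$ and $M=\cup_k H_k$ (the setup behind the notion of genus, cf. Definition \ref{G}, Theorem \ref{genus one} and \cite{Mc}); irreducibility together with the failure of simple connectivity at infinity lets me arrange that infinitely many inclusions $H_k\hookrightarrow H_{k+1}$ are essential. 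When the handlebodies are solid tori this is exactly the genus one situation, already excluded by Theorem \ref{B}; the new content is therefore the higher-genus exhaustion, on which the aim is to rerun the minimal surface argument of Sections 5--7.

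The decisive consequence of the \emph{uniform} lower bound $\kappa_0:=\inf_{x\in M}\kappa(x)>0$ is an a priori area bound. Indeed, for any complete (non-compact) stable minimal surface $\Sigma\subset M$, Theorem \ref{proper} gives $\kappa_0\,\mathrm{Area}(\Sigma)\le \int_\Sigma \kappa(x)\,dv\le 2\pi$, hence $\mathrm{Area}(\Sigma)\le 2\pi/\kappa_0<\infty$. Thus every leaf of any limit lamination produced by the machinery of Subsection 5.2 has \emph{uniformly bounded} area, a strictly stronger conclusion than in the plain \emph{psc} setting, and it is this bound that I expect to replace the genus one topology in closing the argument.

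Next I would construct, exactly as in Subsection 5.1, embedded area-minimising discs $\Omega_k$ spanning essential curves on $\partial H_k$: after the mean-convex modification $g_k$ of the metric (agreeing with $g$ on $H_{k-1}$), Theorem \ref{stable} produces such $\Omega_k$, each meeting a fixed compact core. By Theorem \ref{limits} and Theorem \ref{stability} a subsequence sub-converges to a minimal lamination $\mathscr{L}$ whose leaves are complete and stable, hence non-compact by Theorem \ref{non-compact} and proper by Theorem \ref{proper}. The topological forcing is the heart of the matter: the analogue of Theorem \ref{component} and of Property $P$ (Theorem \ref{PT}) for essential handlebody inclusions should show that, for a fixed compact region $K$ playing the role of $N_1$, the number of components of $\Omega_k\cap K$ meeting the core tends to infinity with $k$ and that this persists in the limit, so that $\mathscr{L}\cap K$ has infinitely many components. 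Each such component carries a definite amount of area by the Meeks--Yau monotonicity estimate \cite{YM} applied on the \emph{fixed} compact $K$, where the injectivity radius and curvature are controlled by compactness. Summing, some leaf of $\mathscr{L}$ would then have infinite area, contradicting the bound $2\pi/\kappa_0$ above; this contradiction forces $M$ to be simply connected at infinity, whence $M\cong\mathbb{R}^3$.

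The main obstacle is precisely this topological forcing in arbitrary genus. Theorem \ref{component}, Lemma \ref{meridian2} and Theorem \ref{PT} are tailored to nested solid tori: they rest on meridians, longitudes, the geometric index of Definition \ref{index} and the injectivity statements of Lemma \ref{injective}, all special to the genus one case. Extending the statement ``an essential spanning disc of $H_k$ meets the inner region in a number of core-crossing components that grows with $k$'' to cube-with-handles requires a usable notion of geometric index and of Property $P$ for handlebody inclusions, together with the guarantee that the relevant components stay confined to the fixed compact $K$ rather than escaping to infinity, where the Meeks--Yau lower area bound would degenerate. Once this forcing is established, the uniform area bound of the second paragraph closes the argument at once.
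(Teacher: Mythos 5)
Your reduction to Stallings' criterion and your derivation of the uniform area bound $\mathrm{Area}(\Sigma)\le 2\pi/\inf_M\kappa$ from Theorem \ref{proper} both match the paper. But the way you propose to close the argument leaves a genuine gap, and you name it yourself: you want to contradict the area bound by showing that some leaf of the limit lamination meets a fixed compact set $K$ in infinitely many components, each carrying a definite amount of area, and this requires a higher-genus analogue of Theorem \ref{component} and Property $P$ for essential inclusions of handlebodies. No such analogue exists in this paper; the geometric index, meridians, longitudes, Lemma \ref{injective} and Lemma \ref{meridian2} are all specific to nested solid tori, and extending the forcing statement to cubes-with-handles is precisely the hard content deferred to later work. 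A proof that ends with ``once this forcing is established, the argument closes'' is not a proof of the corollary.

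The point you missed is that the uniform lower bound on $\kappa$ makes all of that machinery unnecessary. The paper's proof is short: if $M$ is not simply connected at infinity, one produces a single complete non-compact stable minimal surface $\Sigma$, which is conformally $\mathbb{R}^{2}$ by Schoen--Yau; Theorem \ref{proper} plus uniform positivity gives $\mathrm{Area}(\Sigma)\le 2\pi/\inf_M\kappa<\infty$; and then Gromov--Lawson [Theorem 8.8 of \cite{GL}] says that a complete stable minimal surface of \emph{finite area} in a manifold of positive scalar curvature must be homeomorphic to $\mathbb{S}^{2}$. This contradicts $\Sigma\cong\mathbb{R}^{2}$ outright, with no counting of components, no Property $P$, and no analysis of how $\Sigma$ sits relative to the exhaustion. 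You should replace your third and fourth paragraphs with this one-line contradiction; as written, your argument is incomplete exactly where you said it would be.
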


\begin{proof} Suppose that $M$ is not simply connected at infinity. As described in Section 4.1 and Theorem \ref{limits}, there exists a complete (non-compact) stable minimal surface $\Sigma$. By Schoen-Yau's Theorem [Theorem 2, Page 211] in \cite{SY}, $\Sigma$ is conformally diffeomorphic to $\mathbb{R}^{2}$.\par
 Since the scalar curvature $\kappa(x)$ of $M$ is uniformly positive, $\inf_{x\in M}\kappa(x)>0$. From Theorem \ref{proper}, one has, 
 \begin{equation*}
 \begin{split}
 2\pi &\geq \int_{\Sigma} \kappa(x)dv\\
        &\geq \inf_{x\in M}\kappa(x)\cdot \int_{\Sigma}dv\\
        &=\inf_{x\in M}\kappa(x) \cdot Area(\Sigma). 
 \end{split}
 \end{equation*}
Therefore, $\Sigma$ is a surface of finite area. \par
 However, we apply the theorem of Gromov and Lawson (see Theorem 8.8 of  \cite{GL}). This theorem asserts that if $(X, g)$ is a Riemannian manifold of positive scalar curvature, then any complete stable minimal surface of finite area in $X$ is homeomorphic to $\mathbb{S}^{2}$. Hence, $\Sigma$ is homeomorphic to $\mathbb{S}^{2}$, which leads to a contradiction with the topological structure of $\Sigma$ ($\Sigma$ is homeomorphic to $\mathbb{R}^{2}$).
\end{proof}

\section{Topological property of Minimal Laminations}
In the rest, we will use the following notations: 

\begin{itemize}[leftmargin=15pt]
\item we let $(M, g)$ denote a complete contractible genus one $3$-manifold; 
\item the manifold $M$ is an increasing union of solid tori $\{N_k\}_{k=1}^\infty$ and the geometric index $I(N_{k}, N_{k+1})\geq 2$ for each $k$ (see Theorem \ref{genus one}); 
\item we define $\mathscr{L}:=\cup_{t\in \Lambda}$ to be a lamination in $(M, g)$ whose each leaf is a complete (non-compact) stable minimal surface.
\end{itemize}

In this section, we study the effect of positive scalar curvature on minimal surfaces and minimal laminations. Precisely,  we use the positivity of scalar curvature to show that: 

\vspace{2mm}

\noindent \textbf{Theorem} \ref{D}  \emph{If $(M, g)$ has positive scalar curvature, then there is an integer $k_0>0$ so that for $k\geq k_0$, $\mathscr{L}\cap \p N_k(\epsilon)$ is conatined in a disjoint union of \textbf{finitely many} discs in $\p N_k(\epsilon)$, where $N_{k}(\epsilon):=N_{k}\setminus N_{\epsilon}(\partial N_{k})$, $N_{\epsilon}(\partial N_{k})$ is some tubular neighborhood of $\partial N_{k}$. }

\subsection{Topological property of each leaf in a minimal lamination} Theorem \ref{PT} shows that any contractible genus one contractible $3$-manifold satisfies Property $P$. We combine  the positivity of scalar curvature to show that 

\vspace{2mm}

\noindent\textbf{Lemma} \ref{trivial}  \emph{Let (M, g) and $\mathscr{L}$ be assumed as above. If $(M, g)$ has positive scalar curvature, then there exists a positive  integer $k_{0}=k_{0}(M, g)$, such that for each $k\geq k_{0}$ and any $t\in \Lambda$, each embedded circle $\gamma$ in $L_{t}\cap \partial N_{k}$ is contractible in $\partial N_{k}$. }

\begin{proof}  The positivity of  scalar curvature tells that each $L_{t}$ is conformally diffeomorphic to $\mathbb{R}^{2}$ (see  [Theorem 2, Page 211] in \cite{SY}). 

We argue by contradiction. Suppose the contrary that there is a sequence of increasing integers $\{k_{n}\}_{n}$ such that :\par
\vspace{2mm}
{\noindent\emph{ for each $k_{n}$, there is a minimal surface $L_{t_{n}}$ in $\{L_{t}\}_{t\in \Lambda}$ and an embedded curve $c_{k_{n}}\subset L_{t_{n}}\cap\partial N_{k_{n}}$ which is not contractible in $\partial N_{k_{n}}$.} }\par
\vspace{2mm}
\noindent Note that since $\lim\limits_{n\rightarrow\infty}k_{n}=\infty$,  it follows that $\lim\limits_{n\rightarrow\infty} I(N_{1}, N_{k_{n}})=\infty$.

Each $c_{k_n}$ bounds a unique disc $D_{n}\subset L_{t_{n}}$ (since $L_{t_n}\cong \mathbb{R}^2$). From Property $P$ (see Section 4),  $D_{n}\cap \text{Int}~N_{1}$ has at least $I(N_{1}, N_{k_{n}})$ components intersecting $N_{0}$, denoted by $\{\Sigma_{j}\}_{j=1}^{m}$.

Define the constants $r:=d^{M}(\partial N_{0},\partial N_{1})$, $C:=\inf_{x\in N_{1}}\kappa(x)$, $K:=\sup_{x\in N_{1}}|K_{M}|$ and $i_{0}:=\inf_{x\in N_{1}}(\text{Inj}_{M}(x))$ , where $K_{M}$ is the sectional curvature of $(M, g)$ and  $\text{Inj}_{M}(x)$ is the injective radius at $x$ of $(M, g)$.\par
 Choose $r_{0}=\frac{1}{2}\min\{i_{0}, r\}$ and $x_{j}\in \Sigma_{j}\cap N_{0}$, then $B(x_{j}, r_{0})$ is in $ N_{1}$. We apply [Lemma 1, Page 445] in \cite{YM} to the minimal surface $(\Sigma_{j}, \partial\Sigma_{j})\subset (N_{1}, \partial N_{1})$. Hence, one has that 
$$\text{Area}(\Sigma_{j}\cap B(x_{j}, r_{0}))\geq C_{1}(K, i_{0}, r_{0}).$$
From Theorem \ref{proper}, we have:
\begin{equation*}
\begin{split}
2\pi &\geq \int_{L_{t_{n}}} \kappa(x) dv\geq \sum_{j=1}^{m}\int_{\Sigma_{j}} \kappa(x) dv \geq \sum_{j=1}^{m}\int_{\Sigma_{j}\cap B(x_{j}, r_{0})} \kappa(x) dv\\
       &\geq \sum_{j=1}^{m} C\text{Area}(\Sigma_{j}\cap B(x_{j}, r_{0})) \\
       &\geq CC_{1}m\geq CC_{1}I(N_{1}, N_{k_{n}})
       \end{split}
\end{equation*}It contradicts  the fact that $\lim\limits_{n\rightarrow\infty}I(N_{1}, N_{k_{n}})=\infty$ and completes the proof.\end{proof}
\begin{remark}\label{trivial1} The proof of Lemma \ref{trivial} only depends on the extrinsic Cohn-Vossen's inequality (see Theorem \ref{proper}) and the geometric index. If $N_{k}$ is replaced by $N_{k}(\epsilon)$, all geometric indexes remain unchanged where $N_{k}(\epsilon):=N_{k}\setminus N_{\epsilon}(\partial N_{k})$ and $N_{\epsilon}(\partial N_{k})$ is a 2-sided tubular neighborhood of $\partial N_{k}$ with small radius $\epsilon$. Therefore, Lemma \ref{trivial} is also valid for $\partial N_{k}(\epsilon)$. 
\end{remark}

We now use Lemma \ref{trivial} to explain Theorem \ref{D} for the case when $\mathscr{L}$ has finitely many components.

We may assume that each leaf $L_{t}$ intersects $\partial N_{k}$ transversally for a fixed constant $k\geq k_{0}$, where $k_0$ is defined in Lemma \ref{trivial}. Since $L_{t}$ is properly embedded (see Theorem \ref{proper}), $L_{t}\cap \partial N_{k}:=\{\gamma^{t}_{i}\}_{i\in I_{t}}$ has finitely many components. Each component is an embedded closed curve.
Lemma \ref{trivial} shows that each $\gamma^{t}_{i}$ is contractible in $\partial N_{k}$ and  bounds a disc $D^t_i\subset \p N_k$. 

Consider  the partially ordered set $(\{D^{t}_{i}\}_{t\in \Lambda, i\in I_{t}}, \subset)$. Let $C$ be the set of maximal elements. It is a \textbf{finite} set. The set $\mathscr{L}\cap \partial N_{k}$ is contained in the disjoint union of  closed discs in $C$. 

\subsection{Setup of the proof for Theorem \ref{D}}
In order to prove Theorem \ref{D}, we will introduce a set $S$ and prove its finiteness which will imply Theorem \ref{D}. We begin with two topological lemmata.

\begin{lemma}\label{existence} Let $(\Omega, \partial \Omega) \subset (N, \partial N)$ be a 2-sided embedded disc with some open sub-discs removed, where $N$ is a closed solid torus. Assume that each circle $\gamma_{i}$ is contractible in $\partial N$, where $\partial \Omega=\amalg_{i}\gamma_{i}$. Then $N\setminus \Omega$ has two connected components. Moreover, there is a unique component $B$ satisfying that the induced map $\pi_{1}(B)\rightarrow\pi_{1}(N)$ is trivial.  
\end{lemma}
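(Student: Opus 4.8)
The plan is to separate the statement into its two assertions and handle them with, respectively, a $\mathbb{Z}/2$-homology argument and a combination of van Kampen's theorem with an innermost-disc capping induction. Throughout I record two elementary consequences of the hypotheses that drive everything: since $\Omega$ is a compact connected planar surface (a disc with sub-discs removed), $\pi_{1}(\Omega)$ is free and generated by loops homotopic to the boundary circles $\gamma_{i}$; as each $\gamma_{i}$ is contractible in $\partial N\subset N$, the induced map $\pi_{1}(\Omega)\to\pi_{1}(N)$ is \emph{trivial}, and likewise each $[\gamma_{i}]=0$ in $H_{1}(\partial N;\mathbb{Z}/2)$.

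For the first assertion I would show $[\Omega]=0$ in $H_{2}(N,\partial N;\mathbb{Z}/2)$. From the long exact sequence of the pair, $H_{2}(N;\mathbb{Z}/2)\to H_{2}(N,\partial N;\mathbb{Z}/2)\xrightarrow{\partial}H_{1}(\partial N;\mathbb{Z}/2)$, the fact that $N$ is a solid torus ($N\simeq\mathbb{S}^{1}$) gives $H_{2}(N;\mathbb{Z}/2)=0$, so $\partial$ is injective; and $\partial[\Omega]=\sum_{i}[\gamma_{i}]=0$. Hence $[\Omega]=0$. A connected, $2$-sided, properly embedded surface that is null-homologous mod $2$ separates its ambient manifold into \emph{exactly} two components (the two sides of a collar $\Omega\times(-1,1)$ land in distinct components, and every point of $N\setminus\Omega$ is joined to one of these two sides); this yields $N\setminus\Omega=B_{1}\sqcup B_{2}$.

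For uniqueness in the second assertion I would apply van Kampen's theorem to open thickenings $U_{1}\supset\overline{B_{1}}$, $U_{2}\supset\overline{B_{2}}$ with $U_{1}\cap U_{2}$ a collar of the connected surface $\Omega$. Then $\pi_{1}(N)$ is generated by the images of $\pi_{1}(\overline{B_{1}})$ and $\pi_{1}(\overline{B_{2}})$. If both of these images were trivial, $\pi_{1}(N)$ would be trivial, contradicting $\pi_{1}(N)\cong\mathbb{Z}$. Hence \emph{at most one} component has trivial $\pi_{1}$-image. For existence I would induct on the number $m$ of removed sub-discs. When $m=0$, $\Omega$ is a disc with $\partial\Omega=\gamma_{0}$ bounding a disc $E_{0}\subset\partial N$; the sphere $\Omega\cup E_{0}$ bounds a ball $\widehat{B}$ by irreducibility of the solid torus, and the interior of $\widehat{B}$ is the desired component $B$ with trivial $\pi_{1}$-image (a loop in it bounds in the ball, hence is null-homotopic in $N$). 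For $m\geq1$, choose an innermost boundary circle $\gamma_{i}$, i.e.\ one whose disc $E_{i}\subset\partial N$ contains no other $\gamma_{j}$; push $E_{i}$ slightly into $N$ to a cap $\Delta_{i}$ meeting $\Omega$ only along $\gamma_{i}$, so that $\Delta_{i}\cup E_{i}$ bounds a ball $\beta_{i}$. Capping produces $\Omega'=\Omega\cup_{\gamma_{i}}\Delta_{i}$, again a properly embedded planar surface with one fewer hole and all boundary circles contractible, to which the inductive hypothesis applies; the small ball $\beta_{i}$ reattaches to one side and does not affect the triviality of $\pi_{1}$-images.

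The homological separation step is routine and the uniqueness step is short; the main obstacle is the bookkeeping in the existence induction. Concretely, I must ensure the cap $\Delta_{i}$ can be taken embedded and disjoint from $\Omega$ away from $\gamma_{i}$ (this is where the innermost choice of $\gamma_{i}$, together with $\Omega\cap\partial N=\bigcup_{j}\gamma_{j}$, is used), and I must match the two components of $N\setminus\Omega$ with those of $N\setminus\Omega'$ so as to confirm that the trivial-image property is correctly transported across the capping — in particular that the component absorbing $\beta_{i}$ is precisely the one the inductive hypothesis flags. Handling the nesting of the discs $E_{j}$ in $\partial N$, which governs how the planar regions of $\partial N\setminus\bigcup_{j}\gamma_{j}$ are distributed between $B_{1}$ and $B_{2}$, is the delicate point of this tracking.
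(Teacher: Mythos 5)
Your separation argument (mod-$2$ null-homology via the long exact sequence of the pair) and your uniqueness argument (van Kampen on the two sides of the connected surface $\Omega$) are both correct; they are elementary variants of what the paper does (the paper gets separation from an intersection-number count against the capped-off sphere $\hat\Omega$ using $\pi_{2}(N)=0$, and gets uniqueness from Mayer--Vietoris). The gap is in the \emph{existence} step. Your inductive capping is not sound as described: the surface $\Omega'=\Omega\cup_{\gamma_{i}}\Delta_{i}$ is not properly embedded, because $\gamma_{i}$ becomes an interior curve of $\Omega'$ that still lies on $\partial N$. If you leave it there, $N\setminus\Omega'$ has \emph{three} regions, not two (the thin ball $\beta_{i}\setminus\Delta_{i}$ is cut off as a separate clopen piece, since its entire frontier in $N$ is $\Delta_{i}$), so the inductive hypothesis does not even apply. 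Curing this requires an isotopy pushing the corner along $\gamma_{i}$ into $\mathrm{Int}\,N$, and after that isotopy $\Omega'$ no longer contains $\Omega$, so components of $N\setminus\Omega'$ are no longer contained in components of $N\setminus\Omega$. Which of the two resulting components of $N\setminus\Omega'$ absorbs $\beta_{i}$ depends on the direction of the push and on which side of $E_{i}$ the collar of $\Omega$ along $\gamma_{i}$ lies; the assertion that ``$\beta_{i}$ reattaches to one side and does not affect the triviality of $\pi_{1}$-images'' is therefore precisely the statement requiring proof, not a piece of routine bookkeeping. (A test case: $\Omega$ the frontier annulus of a tubular neighborhood $U$ of a boundary-parallel arc; there the distinguished component of $N\setminus\Omega'$ is $U\setminus\beta_{i}$ while $\beta_{i}$ temporarily migrates to the \emph{essential} component of $N\setminus\Omega'$ before returning to the trivial component of $N\setminus\Omega$.)

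The paper's existence argument is entirely different and avoids all of this, and it uses exactly the observation you already recorded but did not exploit: since $\pi_{1}(\Omega)$ is generated by the boundary circles and each $\gamma_{i}$ is contractible in $\partial N$, the map $\pi_{1}(\Omega)\to\pi_{1}(N)$ is trivial. Suppose both components $B_{0},B_{1}$ had nontrivial image in $\pi_{1}(N)\cong\mathbb{Z}$; as any two nontrivial subgroups of $\mathbb{Z}$ intersect nontrivially, there are essential loops $c_{0}\subset B_{0}$ and $c_{1}\subset B_{1}$ that are freely homotopic in $N$. An annulus $A$ realizing this homotopy (the paper takes a Morrey area-minimizing one, but any transverse one works) meets $\Omega$ in circles $\hat\gamma_{j}\subset\Omega$, each of which is nullhomotopic in $N$ by the triviality of $\pi_{1}(\Omega)\to\pi_{1}(N)$; since $\partial(A\cap B_{0})=c_{0}\cup\bigl(\bigcup_{j}\hat\gamma_{j}\bigr)$, one gets $[c_{0}]=\sum_{j}[\hat\gamma_{j}]=0$ in $H_{1}(N)$, a contradiction. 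If you want to keep your structure, I would replace the capping induction by this argument; otherwise you must actually carry out the component-matching across the corner-smoothing isotopy, which is the one step your proposal leaves open.
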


\begin{lemma}\label{sep} Let $(\Omega_{1}, \partial\Omega_{1})$ and $(\Omega_{2}, \partial\Omega_{2})$ be two disjoint surfaces as  in Lemma \ref{existence}. Assume that for $t=1,2$, $N\setminus \Omega_{t}$ has a unique component $B_{t}$ with the property that the map $\pi_{1}(B_{t})\rightarrow \pi_{1}(N)$ is trivial. Then one of the following holds:
\begin{enumerate}
\item $B_{1}\cap B_{2}=\emptyset$;
\item $B_{1}\subset B_{2}$;
\item$B_{2}\subset B_{1}$.
\end{enumerate} 
 \end{lemma}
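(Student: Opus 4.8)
The plan is to analyze how the two surfaces $\Omega_1,\Omega_2$ sit inside $N$ via the three regions they cut out, using Lemma \ref{existence} as the basic building block. By that lemma, each $N\setminus\Omega_t$ splits into exactly two components, one of which is the distinguished component $B_t$ on which $\pi_1(B_t)\to\pi_1(N)$ is trivial; write $A_t$ for the other component, so that the induced map $\pi_1(A_t)\to\pi_1(N)$ is \emph{nontrivial} (indeed surjective, since $A_t$ must carry the core direction of the solid torus). The key structural fact I would extract is that $B_t$ is the ``small'' side: it is the component that does not contain the core $K$ of $N$, while $A_t$ is the ``big'' side containing $K$. I would verify this by noting that if the core $K$ met $B_t$, then a loop running along $K$ would be homotopically nontrivial in $N$ yet live in $B_t$, contradicting triviality of $\pi_1(B_t)\to\pi_1(N)$.

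The main step is then a boundary-comparison argument. Since $\Omega_1$ and $\Omega_2$ are disjoint, $\Omega_2$ lies entirely in one component of $N\setminus\Omega_1$, i.e.\ either $\Omega_2\subset A_1$ or $\Omega_2\subset B_1$, and symmetrically for $\Omega_1$ relative to $\Omega_2$. I would organize the proof by these cases. First I would show that $\partial B_t\subset \Omega_t\cup\partial N$, and more precisely that $\overline{B_t}\cap\partial N$ is the union of the closed discs $D_i\subset\partial N$ bounded by the boundary circles $\gamma_i=\partial\Omega_t$ that are contractible in $\partial N$ (this is where the hypothesis that every $\gamma_i$ bounds a disc in $\partial N$ is used). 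Thus each $B_t$ is a region whose frontier consists of part of $\Omega_t$ together with a union of boundary discs on $\partial N$. The inclusion/disjointness trichotomy for $B_1,B_2$ should then follow from the disjointness of $\Omega_1,\Omega_2$ together with the nesting structure of these boundary discs on the torus $\partial N$.

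Concretely, I would argue as follows. Suppose $B_1\cap B_2\neq\emptyset$; I must then show one contains the other. Since $\Omega_1\cap\Omega_2=\emptyset$, the surface $\Omega_2$ does not cross $\partial B_1$ transversally through its interior, so $\Omega_2$ lies in a single component of $N\setminus\Omega_1$. If $\Omega_2\subset B_1$, then because $B_2$ is the component of $N\setminus\Omega_2$ not containing the core $K$, and $K\notin B_1$ already, I can compare: the component $B_2$ is ``cut out'' by $\Omega_2$ on the small side, and I would show $B_2\subset B_1$ by checking that $B_2$ cannot escape $B_1$ — any path from $B_2$ leaving $B_1$ would have to cross $\Omega_1$, but to reach the core side it would first pass through $\Omega_2$, forcing containment. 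The symmetric subcase $\Omega_2\subset A_1$ (combined with whichever side $\Omega_1$ lies on relative to $\Omega_2$) yields either $B_1\subset B_2$ or the disjoint alternative. The bookkeeping reduces to: given $B_1\cap B_2\neq\emptyset$, rule out the ``interleaved'' configuration in which each of $B_1,B_2$ pokes outside the other, which is impossible precisely because $\Omega_1,\Omega_2$ are disjoint so their frontiers cannot link on $\partial N$.

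The hard part will be making the case analysis airtight, in particular the step identifying $\partial B_t\cap\partial N$ with a union of the contractible boundary discs and then ruling out the interleaved configuration when $B_1\cap B_2\ne\emptyset$. This is a point-set/separation argument on the solid torus where I must be careful that ``disjoint $\Omega_t$'' really forces ``nested or disjoint $B_t$'' and does not permit a partial overlap; the cleanest route is to observe that $\partial B_1$ and $\partial B_2$ are disjoint closed surfaces-with-corners whose frontier pieces on $\partial N$ are nested discs, so that the regions they bound satisfy the standard nesting trichotomy for disjoint separating hypersurfaces. I would lean on the fact that the core $K$ belongs to $A_1\cap A_2$ throughout to anchor which side is ``small,'' since that is what breaks the symmetry and delivers a definite inclusion rather than merely disjointness-or-overlap.
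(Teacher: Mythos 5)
Your overall skeleton matches the paper's: since $\Omega_{1}\cap\Omega_{2}=\emptyset$, the connected surface $\Omega_{2}$ lies in a single component of $N\setminus\Omega_{1}$, and one then identifies $B_{2}$ with the component of $N\setminus\Omega_{2}$ contained in $B_{1}$ (or, in the other case, observes that $B_{1}$ lies in a single component of $N\setminus\Omega_{2}$). The genuine gap is in the step you designate as the symmetry-breaker: the claim that the core $K$ of $N$ lies in $A_{1}\cap A_{2}$, so that $B_{t}$ is ``the side not containing the core.'' Your verification argues that if $K$ \emph{met} $B_{t}$ then the loop $K$ would \emph{live in} $B_{t}$; these are not the same thing. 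The hypotheses only require the boundary circles of $\Omega_{t}$ to be contractible in $\partial N$, and such a surface can perfectly well intersect the core (the capped-off sphere $\hat{\Omega}_{t}$ is null-homologous, so its algebraic intersection with $K$ vanishes, but its geometric intersection need not — and the surfaces to which this lemma is actually applied, components of leaves $L_{t}$ inside $N_{k}$, do meet cores). Hence $K$ need not be contained in either component of $N\setminus\Omega_{t}$, the dichotomy ``which side contains the core'' is not well posed, and the anchor you say you lean on throughout is not available.

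The repair is to use the characterization you already record in your first paragraph, which is what the paper does: by the uniqueness clause of Lemma \ref{existence}, the other component $A_{t}$ of $N\setminus\Omega_{t}$ has nontrivial image in $\pi_{1}(N)$. If $\Omega_{2}\subset B_{1}$, then $A_{1}\cup\Omega_{1}$ is connected and disjoint from $\Omega_{2}$, so it lies in one component $X$ of $N\setminus\Omega_{2}$; since $X\supset A_{1}$, the map $\pi_{1}(X)\rightarrow\pi_{1}(N)$ is nontrivial and $X\neq B_{2}$. The other component lies in $B_{1}$, its fundamental group maps to $\pi_{1}(N)$ through $\pi_{1}(B_{1})$ and hence trivially, so by uniqueness it equals $B_{2}$, giving $B_{2}\subset B_{1}$. (If instead $\Omega_{2}\subset A_{1}$, then $B_{1}$ lies in one component of $N\setminus\Omega_{2}$, yielding $B_{1}\subset B_{2}$ or $B_{1}\cap B_{2}=\emptyset$.) This is exactly the paper's route: it joins a point of $B_{1}\setminus B_{2}$ to a point of $B_{1}\cap B_{2}$ by a path in $B_{1}$ to force $\Omega_{2}\subset B_{1}$, then identifies the component of $B_{1}\setminus\Omega_{2}$ missing $\Omega_{1}$ with $B_{2}$ via triviality of $\pi_{1}(B)\rightarrow\pi_{1}(B_{1})\rightarrow\pi_{1}(N)$ and uniqueness. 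Your additional machinery about $\overline{B_{t}}\cap\partial N$ being a union of nested discs on $\partial N$ is not needed for this and would itself require justification.
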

 
 We will show these two lemmata in Appendix A.

 \vspace{2mm}
 
 In the rest, we use these two topological lemmata to construct the set $S$ and show its finiteness. 
 
 \subsubsection{\textbf{The element in $S$}} Lemma \ref{trivial} shows that there is an integer $k_{0}>0$ satisfying that for any $k\geq k_{0}$ and $t\in \Lambda$, each circle  in $L_{t}\cap \partial N_{k}$ is null-homotopic  in $\partial N_{k}$. 
 
 In the following, we will work on the open solid torus $\text{Int} ~N_k$ and construct the set $S$, for a fixed integer $k\geq k_0$. 
 
 \vspace{2mm}
 
 Let $\{\Sigma^{t}_{i}\}_{i\in I_{t}}$ be  the set of components of $L_{t}\cap \text{Int} ~N_{k}$ for each $t\in\Lambda$. (It may be empty.)  We will show that for each component $\Sigma^{t}_{i}$, $N_{k}\setminus \overline{\Sigma^{t}_{i}}$ has a unique component $B^{t}_{i}$ satisfying that $\pi_{1}(B^{t}_{i})\rightarrow \pi_{1}(N_{k})$ is trivial. 

\vspace{2mm}

If $L_{t}$ intersects $\p N_{k}$ transversally,  the boundary $\p \Sigma^{t}_{i}\subset L_{t}\cap \p N_{k}$ is the union of some disjointly embedded circles. We have that 

\begin{itemize}[leftmargin=15pt]
\item each circle in the boundary $\partial \Sigma^{t}_{i}\subset L_{t}\cap \p N_{k}$ is contractible in $\p N_{k}$ (see Lemma \ref{trivial});
\item $\Sigma^{t}_{i}$ is homeomorphic to an open disc with some disjoint closed subdiscs removed (since $L_t\cong \mathbb{R}^2$).
\end{itemize}  By Lemma \ref{existence}, $N_{k}\setminus \overline{\Sigma^{t}_{i}}$ has a unique component $B^{t}_{i}$ satisfying that $\pi_{1}(B^{t}_{i})\rightarrow \pi_{1}(N_{k})$ is trivial.

\vspace{2mm}

In general, $L_{t}$ may not intersect $\p N_{k}$ transversally. To overcome it, we will deform the surface $\p N_{k}$.  Precisely, for the leaf $L_{t}$, we find  a new solid torus $\tilde{N}_{k}(\epsilon_{t})$ so that $L_{t}$ intersects $\p \tilde{N}(\epsilon_{t})$ transversally, where $\tilde{N}_{k}(\epsilon_{t})$ is a closed tubular neighborhood of $ N_{k}$ in $M$.   

Consider the component $\tilde{\Sigma}^{t}_{i}$ of $L_{t}\cap \text{Int} ~\tilde{N}_{k}(\epsilon_{t})$ containing $\Sigma^{t}_{i}$. As above,  $\tilde{N}_{k}~(\epsilon_{t})\setminus \overline{\tilde{\Sigma}^{t}_{i}}$ has a unique component $\tilde{B}^{t}_{i}$ so that the map $\pi_{1}(\tilde{B}^{t}_{i})\rightarrow \pi_{1}(\tilde{N}_{k}(\epsilon_{t}))$ is trivial. 

Choose the component $B^{t}_{i}$ of $\tilde{B}^{t}_{i}\cap N_{k}$ whose boundary contains $\Sigma^{t}_{i}$. It is a component of $N_{k}\setminus \overline{\Sigma^{t}_{i}}$. In addition, the map $\pi_{1}(B^{t}_{i})\rightarrow \pi_{1}(\tilde{B}^{t}_{i})\rightarrow \pi_{1}(\tilde{N}_{k}(\epsilon_{t}))$ is trivial. Since $N_{k}$ and $\tilde{N}_{k}(\epsilon_{t})$ are homotopy equivalent, the map $\pi_{1}(B^{t}_{i})\rightarrow \pi_{1}({N}_{k})$ is also trivial.  This finishes the construction of $B^{t}_{i}$.

\subsubsection{\textbf{Properties of $S$}}
 From Lemma \ref{sep}, for any $B^{t}_{i}$ and $B^{t'}_{i'}$, one of the following holds: 
 
 \begin{enumerate}
\item $B^{t}_{i}\cap B^{t'}_{i'}=\emptyset$;
\item $B^{t}_{i}\subset B^{t'}_{i'}$;
\item$B^{t'}_{i'}\subset B^{t}_{i}$,
\end{enumerate} 
 where $t, t'\in \Lambda$, $i\in I_{t}$ and $i'\in I_{t'}$. 
 
  Therefore, $(\{B^{t}_{i}\}_{t\in \Lambda, i\in I_{t}}, \subset)$ is a partially ordered set. We consider  the set $\{B_{j}\}_{j\in J}$ of maximal elements. It  may be infinite.

\begin{definition}
  $S:=\{B_{j}~:~ B_{j}\cap N_{k}(\epsilon/2)\neq \emptyset\}$ where $N_{k}(\epsilon/2):=N_{k}\setminus N_{\epsilon/2}(\partial N_{k})$ and $N_{\epsilon/2}(\partial N_{k})$ is a 2-sided tubular neighborhood of $\partial N_{k}$ with radius $\epsilon/2$.
\end{definition}

\begin{proposition}\label{intersect} Let $\Sigma^{t}_{i}$ be a component of $L_{t}\cap \text{Int}~N_{k}$ and $B^{t}_{i}$ assumed as above. If $B^{t}_{i}$ is an element in $S$, then $\Sigma^{t}_{i}\cap N_{k}(\epsilon/2)$ is nonempty. \end{proposition}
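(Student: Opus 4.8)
The plan is to argue by contraposition: I will show that if $\Sigma^t_i \cap N_k(\epsilon/2) = \emptyset$, then $B^t_i$ also misses the core $N_k(\epsilon/2)$, so $B^t_i$ cannot belong to $S$. The guiding intuition is that the shrunken solid torus $N_k(\epsilon/2)$ carries the generator of $\pi_1(N_k)\cong \mathbb{Z}$, whereas $B^t_i$ is by its very construction the complementary region whose fundamental group dies in $N_k$; these two facts are incompatible unless the separating piece $\Sigma^t_i$ genuinely reaches into the core.

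First I would pin down the frontier of $\Sigma^t_i$. Since each leaf $L_t$ is properly embedded (Theorem \ref{proper}), it is closed in $M$, and $\Sigma^t_i$ is a connected component—hence both open and closed—of the open surface $L_t \cap \text{Int}~N_k$. Consequently every frontier point of $\Sigma^t_i$ lies on $\partial N_k$, that is, $\overline{\Sigma^t_i} \subseteq \Sigma^t_i \cup \partial N_k$. Because $\partial N_k \subset N_{\epsilon/2}(\partial N_k)$ is disjoint from $N_k(\epsilon/2)$, the hypothesis $\Sigma^t_i \cap N_k(\epsilon/2) = \emptyset$ upgrades at no cost to $\overline{\Sigma^t_i} \cap N_k(\epsilon/2) = \emptyset$.

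The second step is the homotopy-theoretic separation. The inclusion $N_k(\epsilon/2) \hookrightarrow N_k$ is a homotopy equivalence (removing a boundary collar from a solid torus), so $N_k(\epsilon/2)$ is connected and $\pi_1(N_k(\epsilon/2)) \to \pi_1(N_k)$ is surjective. By the previous step $N_k(\epsilon/2) \subset N_k \setminus \overline{\Sigma^t_i}$, and being connected it lies inside a single component $B'$ of this complement. The composite $\pi_1(N_k(\epsilon/2)) \to \pi_1(B') \to \pi_1(N_k)$ is the surjection just noted, so $\pi_1(B') \to \pi_1(N_k)$ is nontrivial. On the other hand, $B^t_i$ is by its defining property the component of $N_k \setminus \overline{\Sigma^t_i}$ whose $\pi_1$ maps trivially into $\pi_1(N_k)$. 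Hence $B' \neq B^t_i$; distinct components of a space are disjoint, so $N_k(\epsilon/2) \subset B'$ forces $B^t_i \cap N_k(\epsilon/2) = \emptyset$. This contradicts $B^t_i \in S$, which requires $B^t_i \cap N_k(\epsilon/2) \neq \emptyset$, and completes the contrapositive.

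I do not expect a genuine obstacle: the argument is purely topological once properness of the leaves is available. The only points requiring care are verifying that the frontier of $\Sigma^t_i$ sits on $\partial N_k$—which is exactly where I use that $L_t$ is \emph{closed} in $M$, i.e. properly embedded, rather than merely a leaf of a lamination—and confirming that $N_k(\epsilon/2)$ maps onto $\pi_1(N_k)$, which follows from the standard fact that peeling a collar off a solid torus is a deformation retraction. I note that I do not even need the full \textbf{exactly two components} conclusion of Lemma \ref{existence}: it suffices that $B^t_i$ is the \emph{unique} component with trivial $\pi_1$-image, which makes $B' \neq B^t_i$ immediate.
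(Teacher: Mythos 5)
Your proof is correct and is essentially the paper's own argument, merely phrased as a contrapositive instead of a contradiction: both hinge on the observation that the connected set $N_{k}(\epsilon/2)$ would lie in a single component of $N_{k}\setminus\overline{\Sigma^{t}_{i}}$ on which $\pi_{1}$ surjects onto $\pi_{1}(N_{k})$, which is incompatible with that component being $B^{t}_{i}$. Your extra check that the frontier of $\Sigma^{t}_{i}$ lies on $\partial N_{k}$ (so that $\overline{\Sigma^{t}_{i}}$ also misses $N_{k}(\epsilon/2)$) is a small detail the paper leaves implicit, and is a worthwhile addition.
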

\begin{proof}
We argue by contradiction. Suppose that  $\Sigma^{t}_{i}\cap N_{k}(\epsilon/2)$ is empty.  As mentioned above, $\overline{\Sigma^{t}_{i}}$ cuts $N_{k}$ into two components. Hence, $N_{k}(\epsilon/2)$ must be in one of these two components. 

The definition of  $S$ shows that the component $B^{t}_{i}$ of $N_{k}\setminus \overline{\Sigma^{t}_{i}}$ must intersect $N_{k}(\epsilon/2)$. Thus, $N_{k}(\epsilon/2)$ is contained in $B^{t}_{i}$. 

However, the composition of maps $\pi_{1}(N_{k}(\epsilon/2))\rightarrow \pi_{1}(B^{t}_{i})\rightarrow \pi_{1}(N_{k})$ is an isomorphism. Namely, the map $\pi_{1}(B^{t}_{i})\rightarrow \pi_{1}(N_{k})$ is non-trivial and surjective, which leads to a contradiction with the fact that the map $\pi_{1}(B^{t}_{i})\rightarrow \pi_{1}(N_{k})$ is trivial.
 This finishes the proof.
\end{proof}

 \begin{proposition}\label{cover} Let $N_k$, $B_j$ and $S$ be assumed as above. Then one has that
 $N_{k}(\epsilon)\cap \mathscr{L}\subset \bigcup_{{B}_{j} \in S} \overline{B}_{j}\cap N_{k}(\epsilon)$. Moreover, $\partial N_{k}(\epsilon)\cap \mathscr{L}\subset \bigcup_{{B}_{j}\in S} \overline{B}_{j}\cap \partial N_{k}(\epsilon)$.
 \end{proposition}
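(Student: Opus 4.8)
The plan is to prove both inclusions by a single pointwise argument: I will show that every point $x\in N_{k}(\epsilon)\cap\mathscr{L}$, and likewise every $x\in\partial N_{k}(\epsilon)\cap\mathscr{L}$, lies in the closure $\overline{B}_{j}$ of some element $B_{j}\in S$. Fix such an $x$. Both $N_{k}(\epsilon)$ and $\partial N_{k}(\epsilon)$ lie in $\text{Int}~N_{k}$, and every one of their points is at distance at least $\epsilon$ from $\partial N_{k}$. Since $x$ lies on some leaf $L_{t}$, it belongs to $L_{t}\cap\text{Int}~N_{k}$ and hence to exactly one component $\Sigma^{t}_{i}$ of that intersection. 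The boundary circles of $\Sigma^{t}_{i}$ are contained in $\partial N_{k}$ (that is how the components are cut out, Lemma \ref{trivial} guaranteeing they are contractible there), whereas $d(x,\partial N_{k})\geq\epsilon$; consequently $x$ is an \emph{interior} point of the open surface $\Sigma^{t}_{i}$, not a point of $\partial\Sigma^{t}_{i}$. This is the only place transversality matters: when $L_{t}$ is not transverse to $\partial N_{k}$ one replaces $N_{k}$ by the nearby torus $\tilde{N}_{k}(\epsilon_{t})$ exactly as in the construction of $B^{t}_{i}$, and the conclusion that $x$ is interior to the relevant component is unchanged.

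Next I invoke the separation structure. By Lemma \ref{existence} (applied to $\Sigma^{t}_{i}$, possibly after the deformation above), the closure $\overline{\Sigma^{t}_{i}}$ separates $N_{k}$ into exactly two components, one of which is $B^{t}_{i}$, the unique one with $\pi_{1}(B^{t}_{i})\rightarrow\pi_{1}(N_{k})$ trivial. Because $\Sigma^{t}_{i}$ is two-sided (it is a leaf of a codimension-one lamination in the orientable manifold $M$, and $L_{t}\cong\mathbb{R}^{2}$) and $x$ is an interior point of it, $x$ lies in the common frontier of both components; in particular $x\in\overline{B^{t}_{i}}$. Moreover, choose $\delta<\epsilon/2$ so small that $B(x,\delta)\cap\overline{\Sigma^{t}_{i}}$ is a single embedded disc cutting $B(x,\delta)$ into two half-balls. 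One half-ball lies in $B^{t}_{i}$, and since every point of $B(x,\delta)$ is at distance greater than $\epsilon/2$ from $\partial N_{k}$, that half-ball lies in $N_{k}(\epsilon/2)$. Hence $B^{t}_{i}\cap N_{k}(\epsilon/2)\neq\emptyset$.

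Finally I pass to a maximal element. By Lemma \ref{sep}, any two of the regions $B^{t'}_{i'}$ are disjoint or nested, so all the regions containing $B^{t}_{i}$ form a chain; let $B_{j}$ be a maximal element of the poset $(\{B^{t'}_{i'}\},\subset)$ with $B^{t}_{i}\subseteq B_{j}$. Then $B_{j}\cap N_{k}(\epsilon/2)\supseteq B^{t}_{i}\cap N_{k}(\epsilon/2)\neq\emptyset$, so $B_{j}\in S$ by definition, while $x\in\overline{B^{t}_{i}}\subseteq\overline{B}_{j}$. Since $x\in N_{k}(\epsilon)$ (respectively $x\in\partial N_{k}(\epsilon)$), this yields $x\in\overline{B}_{j}\cap N_{k}(\epsilon)$ (respectively $x\in\overline{B}_{j}\cap\partial N_{k}(\epsilon)$), which is precisely the asserted inclusion.

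The geometric heart of the argument is the local half-ball computation of the previous paragraph, which is routine. The main obstacle I expect is the final step: guaranteeing that $B^{t}_{i}$ actually sits beneath a \emph{maximal} element of the family. Lemma \ref{sep} shows the regions above $B^{t}_{i}$ form a chain, but one must exclude an infinite strictly ascending chain admitting no upper bound within the family. I would control this using the local product structure of the lamination $\mathscr{L}$ together with the disjointness of its leaves: an ascending chain of such regions has frontier surfaces $\overline{\Sigma^{t_{n}}_{i_{n}}}$ lying on leaves that, by closedness of $\mathscr{L}$ and the chart form $\mathbb{R}^{2}\times\{t\}$, accumulate on a leaf, whose corresponding component is then the region realizing the supremum of the chain. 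This is the one point requiring care; the rest is bookkeeping with the separation and nesting lemmas already established.
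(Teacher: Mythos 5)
Your argument is correct and is essentially the paper's proof carried out pointwise: the paper likewise observes that each component $\Sigma^{t}_{i}$ lies in $\overline{B^{t}_{i}}$, passes to the maximal elements $\{B_{j}\}_{j\in J}$, and then discards those not in $S$ by noting that if $B_{j}\cap N_{k}(\epsilon/2)=\emptyset$ then $B_{j}$ lies in the $\epsilon/2$-collar of $\partial N_{k}$ and hence $\overline{B_{j}}$ misses $N_{k}(\epsilon)$ entirely --- a slightly cheaper route to membership in $S$ than your half-ball computation, though yours is also valid. The one step you flag as delicate, namely that every $B^{t}_{i}$ really does sit below a maximal element of the (possibly infinite) poset, is silently assumed in the paper's assertion that $\cup B^{t}_{i}=\cup_{j\in J}B_{j}$, so your instinct to justify it via accumulation of the frontier leaves is a genuine tightening rather than a detour.
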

  
\begin{proof} Each component $\Sigma^{t}_{i}$ of $L_{t}\cap \text{Int}~N_{k}$ is contained in $\overline{B^{t}_{i}}$. Hence, $L_{t}\cap \text{Int} ~N_{k}$ is in $\cup_{i\in I_{t}}\overline{B^{t}_{i}}$. We can conclude that $\mathscr{L}\cap\text{Int}~ N_{k}$ is contained in $\cup \overline{B^{t}_{i}} $. 

By the maximality of $\{B_j\}_{j\in J}$, the set $\cup B^{t}_{i}$ is equal to $\cup_{j\in J} B_{j}$. 
Therefore, $\mathscr{L}\cap \text{Int}~N_{k}$ is in $\cup_{j\in J} \overline{B_{j}}$. 
In addition, the definition of $S$ gives that $\cup_{j\in J}\overline{B_{j}}\cap N_{k}(\epsilon)=\cup_{B_{j}\in S}\overline{B_{j}}\cap N_{k}(\epsilon)$. Therefore, $N_{k}(\epsilon)\cap \mathscr{L}\subset \cup_{B_{j}\in S} \overline{B_{j}}\cap N_{k}(\epsilon)$. 

Similarly, one has that $\cup_{j\in J}\overline{B_{j}}\cap \partial N_{k}(\epsilon)$ equals $\cup_{B_{j}\in S}\overline{B_{j}}\cap \partial N_{k}(\epsilon)$. Hence, $\partial N_{k}(\epsilon)\cap \mathscr{L}\subset \cup_{B_{j}\in S} \overline{B_{j}}\cap \partial N_{k}(\epsilon)$. \end{proof}
 
 \subsection{The finiteness of the set $S$}
 The set $\partial B_{j}\cap \text{Int}~ N_{k}$ equals some $\Sigma^{t}_{i}\subset L_{t}$ for some $t \in \Lambda$.
Let  $S_{t}:=\{B_{j}\in S|~ \partial B_{j}\cap\text{Int}~ N_{k}\subset L_{t} \}$. Then, $S=\amalg_{t\in \Lambda} S_{t}$.

In this subsection, we first show that each $S_{t}$ is finite. Then, we argue that $\{S_{t}\}_{t\in\Lambda}$ contains at most finitely many nonempty sets.
These imply the finiteness of $S$. 
 
\begin{lemma}\label{finite1} Each $S_{t}$ is finite.\end{lemma}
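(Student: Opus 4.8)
The plan is to give every element of $S_t$ a definite amount of scalar-curvature mass on the single leaf $L_t$, and then to cap the number of such elements using the extrinsic Cohn-Vossen bound of Theorem \ref{proper}.

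First I would pin down the correspondence between $S_t$ and components of $L_t\cap\text{Int}~N_k$. By construction each $B_j\in S_t$ is the distinguished component $B^t_i$ of $N_k\setminus\overline{\Sigma^t_i}$ attached to exactly one component $\Sigma^t_i$ of $L_t\cap\text{Int}~N_k$, namely $\Sigma^t_i=\partial B_j\cap\text{Int}~N_k$. Since a given $\Sigma^t_i$ determines the component $B^t_i$ with trivial $\pi_1(B^t_i)\rightarrow\pi_1(N_k)$ uniquely, the assignment $B_j\mapsto\Sigma^t_i$ is injective, so distinct elements of $S_t$ yield distinct, hence pairwise disjoint, subsurfaces $\Sigma^t_i$ of $L_t$. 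Moreover, by Proposition \ref{intersect}, for every $B_j=B^t_i\in S_t$ the component $\Sigma^t_i$ meets $N_k(\epsilon/2)$.

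Next I would extract a definite area from each such component. Fix a point $x_i\in\Sigma^t_i\cap N_k(\epsilon/2)$; since $x_i$ lies outside the radius-$\epsilon/2$ tubular neighborhood of $\partial N_k$, we have $d^M(x_i,\partial N_k)\geq\epsilon/2$. Set $i_0:=\inf_{x\in N_k}\text{Inj}_M(x)$, $K:=\sup_{x\in N_k}|K_M(x)|$, and choose $r_0=\tfrac12\min\{\epsilon/2,i_0\}$, so that the geodesic ball $B(x_i,r_0)$ is contained in $\text{Int}~N_k$ and avoids $\partial\Sigma^t_i\subset\partial N_k$. As $\Sigma^t_i$ is a piece of the $g$-minimal surface $L_t$ whose boundary is disjoint from $B(x_i,r_0)$, the area lower bound of Meeks-Yau [Lemma 1, Page 445] in \cite{YM} gives $\text{Area}(\Sigma^t_i\cap B(x_i,r_0))\geq C_1$ for a constant $C_1=C_1(K,i_0,r_0)>0$ independent of $t$ and $i$.

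Finally I would integrate against the scalar curvature. Put $C:=\inf_{x\in N_k}\kappa(x)>0$, which is positive because $\kappa>0$ and $\overline{N_k}$ is compact. Since $B(x_i,r_0)\subset N_k$ and $\kappa>0$, each component satisfies $\int_{\Sigma^t_i}\kappa\,dv\geq C\,\text{Area}(\Sigma^t_i\cap B(x_i,r_0))\geq C\,C_1$. The components attached to distinct elements of $S_t$ are pairwise disjoint subsurfaces of the complete stable minimal surface $L_t$, so Theorem \ref{proper} yields
\[
2\pi\geq\int_{L_t}\kappa\,dv\geq\sum_{B^t_i\in S_t}\int_{\Sigma^t_i}\kappa\,dv\geq |S_t|\,C\,C_1,
\]
whence $|S_t|\leq 2\pi/(C\,C_1)<\infty$. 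The only point requiring care is geometric rather than topological: one must choose $r_0$ small enough that $B(x_i,r_0)$ stays inside $N_k$ and misses the boundary circles $\partial\Sigma^t_i$, which is precisely what the cutoff to $N_k(\epsilon/2)$ supplied by Proposition \ref{intersect} guarantees. With that in hand the estimate is the same area-versus-Cohn-Vossen comparison already used in Lemma \ref{trivial}, and in fact it gives a bound on $|S_t|$ that is uniform in $t$.
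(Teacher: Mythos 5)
Your proof is correct and follows essentially the same route as the paper: use Proposition \ref{intersect} to place a point of each component $\Sigma^{t}_{i}$ deep inside $N_{k}$, apply the Meeks--Yau area lower bound on a small geodesic ball, and compare the resulting total scalar-curvature mass on the disjoint components of $L_{t}\cap \text{Int}~N_{k}$ with the $2\pi$ bound of Theorem \ref{proper}. The only cosmetic difference is that you phrase it as a direct bound $|S_{t}|\leq 2\pi/(CC_{1})$ rather than as a contradiction with infinitude.
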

\begin{proof}

We argue by contradiction. Suppose that $S_{t}$ is infinite for some $t$. 

For each $B_{j}\in S_{t}$, there exists  a $i\in I_{t}$ so that $B_{j}$ is equal to $B^{t}_{i}$, where $B^{t}_{i}$ is a component of $N_{k}\setminus \overline{\Sigma^{t}_{i}}$ and $\Sigma^{t}_{i}$ is one component of $L_{t}\cap \text{Int}~N_{k}$. By Proposition \ref{intersect}, $\Sigma^{t}_{i}\cap N_{k}(\epsilon/2)$ is nonempty. 
 
Choose $x_{j}\in\Sigma^{t}_{i}\cap N_{k}(\epsilon/2)$ and $r_{0}=\frac{1}{2}\min\{\epsilon/2, i_{0}\}$, where $i_{0}:=\inf_{x\in N_{k}} \text{Inj}_{M}(x)$.
Then the geodesic ball $B(x_{j}, r_{0})\subset M$ is contained in $N_{k}$.

We apply [Lemma 1, Page 445] in \cite{YM} to the minimal surface $(\Sigma^{t}_{i},\partial \Sigma^{t}_{i})\subset (N_{k}, \partial N_{k})$.  One has that, 
$$\text{Area}( \Sigma^{t}_{i}\cap B(x_{j}, r_{0}))\geq C(r_{0}, i_{0}, K)$$
where $K=\sup_{x\in N_{k}}|K_{M}|$.  One has 
\begin{equation*}
\begin{split}
\int_{L_{t}} \kappa(x)dv
       & \geq\sum_{B_{j}\in S_{t}} \int_{\Sigma^{t}_{i}} \kappa(x) dv\geq \sum_{B_{j}\in S_{t}} \int_{\Sigma^{t}_{i}\cap B(x_{j}, r_{0})} \kappa(x) dv\\
      &\geq \inf_{x\in N_{k}}(\kappa(x))\sum_{B_{j}\in S_{t}} \text{Area}(B(x_{j}, r_{0})\cap \Sigma^{t}_{i})\\
      &\geq C\inf_{x\in N_{k}}(\kappa(x)) |S_{t}|
\end{split}
\end{equation*}
From Theorem \ref{proper}, LHS$\leq 2\pi$, while RHS is infinite. This is a contradiction and finishes the proof.
\end{proof}

\begin{lemma}\label{finite2} $\{S_{t}\}_{t\in \Lambda}$ contains at most finitely many nonempty sets.
\end{lemma}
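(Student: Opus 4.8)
The plan is to argue by contradiction and reduce the statement to the topological separation properties of the regions $B^{t}_{i}$, rather than to an area estimate. The area argument of Lemma~\ref{finite1} cannot be repeated verbatim here: if infinitely many $S_{t}$ were nonempty, the resulting definite chunks of area would sit on \emph{distinct} leaves $L_{t_n}$, so no single application of Theorem~\ref{proper} bounds their sum. Instead I would exploit that distinct elements of $S$ are pairwise disjoint (two distinct maximal elements of the poset $(\{B^{t}_{i}\},\subset)$ cannot be nested, hence are disjoint by Lemma~\ref{sep}) and show that a lamination chart forces two of them to be nested, a contradiction.

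First I would suppose $S_{t}\neq\emptyset$ for infinitely many distinct $t$, and for each such $t$ choose a component $\Sigma^{t}_{i}$ of $L_{t}\cap\mathrm{Int}\,N_{k}$ with $B^{t}_{i}\in S$. By Proposition~\ref{intersect}, $\Sigma^{t}_{i}\cap N_{k}(\epsilon/2)\neq\emptyset$, so I may pick a point $x$ in this intersection. This produces a sequence of distinct leaves $L_{t_n}$ together with points $x_n\in N_{k}(\epsilon/2)$. Since $N_{k}(\epsilon/2)$ is compact and $\mathscr{L}$ is closed in $M$, after passing to a subsequence $x_n\to x_{\infty}\in\mathscr{L}$.

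Next I would pass to a foliated chart $W\cong D^{2}\times(-1,1)$ around $x_{\infty}$ provided by Definition~\ref{lam}, in which $\mathscr{L}\cap W$ is a union of horizontal slices $D^{2}\times\{s\}$. For $n$ large, $x_n\in W$ lies on a slice $\sigma_n=D^{2}\times\{s_n\}$ belonging to $\Sigma^{t_n}_{i_n}$; as the leaves are distinct the heights $s_n$ are distinct, and after a further subsequence they are monotone, say $s_1>s_2>s_3>\cdots$. By Lemma~\ref{existence}, $N_{k}\setminus\overline{\Sigma^{t_n}_{i_n}}$ has exactly two components, so near $x_{\infty}$ the slice $\sigma_n$ separates $W$ into two sides, exactly one of which lies in $B^{t_n}_{i_n}$. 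Recording for each $n$ whether $B^{t_n}_{i_n}$ lies \emph{above} or \emph{below} $\sigma_n$ and examining the three slices $\sigma_1,\sigma_2,\sigma_3$, a short case analysis shows that some pair $B^{t_n}_{i_n},B^{t_m}_{i_m}$ must share points of $W$: for a pair $n<m$ the only way to avoid a local overlap is that $B^{t_n}_{i_n}$ lies above $\sigma_n$ while $B^{t_m}_{i_m}$ lies below $\sigma_m$, and applied to the pairs $(1,2)$ and $(2,3)$ this forces $B^{t_2}_{i_2}$ to lie both below and above $\sigma_2$, which is impossible. Hence two of these regions intersect.

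Finally, two intersecting regions cannot be disjoint, so by Lemma~\ref{sep} one is contained in the other; as they are distinct this makes the smaller one non-maximal, contradicting its membership in $S$. Therefore only finitely many $S_{t}$ can be nonempty. The main obstacle is precisely the local side-analysis in the chart: one must verify that each $B^{t_n}_{i_n}$ genuinely occupies a single side of its slice near $x_{\infty}$ (so that the above/below bookkeeping is well defined) and that the chart can be chosen small enough that the relevant component $\Sigma^{t_n}_{i_n}$ meets it in the single sheet through $x_n$. Once this local picture is pinned down, the combinatorial contradiction is immediate, and notably it uses only the separation Lemmas~\ref{existence} and~\ref{sep} together with the lamination structure, not a further area bound.
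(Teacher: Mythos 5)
Your overall strategy is the same as the paper's: accumulate the points $x_n\in \Sigma^{t_n}_{i_n}\cap N_k(\epsilon/2)$, pass to a lamination chart around the limit point, and use an above/below analysis of the slices to force two maximal regions $B^{t_n}_{i_n}$, $B^{t_m}_{i_m}$ to intersect, contradicting Lemma \ref{sep} and maximality. The combinatorial endgame you describe (three slices, the middle one squeezed) is essentially the paper's Step~3. However, the step you flag as ``the main obstacle'' and defer --- that the chart can be chosen so that each $\Sigma^{t_n}_{i_n}$ meets it in a \emph{single} sheet --- is not a routine verification; it is the technical heart of the proof and it is genuinely missing from your argument. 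The lamination structure alone does not give it: Definition \ref{lam} controls $\mathscr{L}\cap U$ as a union of slices, but says nothing about how many of those slices belong to one fixed component $\Sigma^{t_n}_{i_n}$ of one leaf. A single component may re-enter any chart many times (leaves of a lamination can even spiral onto the limit leaf), and if $\overline{\Sigma^{t_n}_{i_n}}$ contributes another sheet between $\sigma_n$ and $\sigma_m$, then $B^{t_n}_{i_n}$ --- which switches to the complementary component each time one crosses a sheet of $\overline{\Sigma^{t_n}_{i_n}}$ --- need not extend from just below $\sigma_n$ down to a neighborhood of $\sigma_m$, and your case analysis collapses.

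Your closing claim that the argument ``uses only the separation Lemmas \ref{existence} and \ref{sep} together with the lamination structure, not a further area bound'' is therefore incorrect as it stands. The paper closes exactly this gap in its Steps~1 and~2: it uses Schoen's curvature estimate together with the local area bound $\mathrm{Area}(K\cap L_{t_n})\leq 2\pi(\inf_K\kappa)^{-1}$, which comes from Theorem \ref{proper}, to extract via Theorem \ref{convergence} a subsequence of the leaves $L_{t_n}$ converging with \emph{finite multiplicity} to a sublamination; it then uses the covering-map argument of Remark \ref{graph} over a simply connected piece $D$ of the limit leaf (possible because $L_{t_\infty}\cong\mathbb{R}^2$) to show that $\pi|_{\Sigma^{t_n}_{i_n}}$ is injective, i.e.\ that the convergence of the components $\Sigma^{t_n}_{i_n}$ has multiplicity one --- which is precisely the single-sheet property your bookkeeping needs. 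So your opening observation is right (one cannot simply sum areas over distinct leaves as in Lemma \ref{finite1}), but the per-leaf area bound from Theorem \ref{proper} is still indispensable, just routed through the convergence theory rather than through a direct summation. To complete your proof you would need to supply this finite-multiplicity/multiplicity-one step, at which point your argument becomes the paper's.
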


\begin{proof}We argue by contradiction. Suppose that there exists a sequence $\{S_{t_{n}}\}_{n\in\mathbb{N}}$ of nonempty sets.
For an element $B_{j_{t_{n}}}\in S_{t_{n}}$, there is some $i_{n}\in I_{t_{n}}$ so that $B_{j_{t_n}}$ equals $B^{t_{n}}_{i_{n}}$ where $B^{t_{n}}_{i_{n}}$ is one component of $N_{k}\setminus \overline{\Sigma^{t_{n}}_{i_{n}}}$ and $\Sigma^{t_{n}}_{i_{n}}$ is one of components of $L_{t_{n}}\cap \text{Int}~N_{k}$.

By Proposition \ref{intersect}, $\Sigma^{t_{n}}_{i_{n}}\cap N_{k}(\epsilon/2)$ is not empty. Pick a point $p_{t_{n}}$ in $\Sigma^{t_{n}}_{i_{n}}\cap N_{k}(\epsilon/2)$.

\vspace{2mm}
\noindent\emph{\textbf{Step1}: $\{L_{t_{n}}\}$ subconverges to a lamination $\mathscr{L}'\subset \mathscr{L}$ with finite multiplicity.}\par
\vspace{2mm}

Since $L_{t_{n}}$ is a stable minimal surface, we use  [Theorem 3, Page122] of \cite{Sc} to have that for any compact set $K\subset M$, there is a constant $C_{1}:=C_{1}(K, M, g)$ such that   $$|A_{L_{t_{n}}}|^{2}\leq C_{1}~ \text{on}~ K\cap L_{t_{n}}.$$ 
 Theorem \ref{proper} gives $\int_{L_{t_{n}}}\kappa(x)dv\leq 2\pi$. Hence,  $$\text{Area}(K\cap L_{t_{n}})\leq 2\pi(\inf\limits_{x\in K}\kappa(x))^{-1}.$$

From Theorem \ref{convergence}, $\{L_{t_{n}}\}$ subconverges to a properly embedded lamination $\mathscr{L}'$ with finite multiplicity.  Since $\mathscr{L}$ is a closed set in $M$, $\mathscr{L}'\subset \mathscr{L}$ is a sublamination.\par

From now on, we abuse notation and write $\{L_{t_{n}}\}$ and $\{p_{t_{n}}\}$ for  the convergent subsequences.

\vspace{2mm}
\noindent\emph{\textbf{Step 2}: $\{\Sigma^{t_{n}}_{i_{n}}\}$ converges  with multiplicity one.}\par
\vspace{2mm}

Let $L_{t_{\infty}}$ be the unique component of $\mathscr{L}'$ passing through $p_{\infty}$, where $p_{\infty}=\lim_{n\rightarrow \infty}p_{t_{n}}$. The limit of $\{\Sigma^{t_{n}}_{i_{n}}\}$ is the component $\Sigma_{\infty}$ of $L_{t_{\infty}}\cap N_{k}$ passing through $p_{\infty}$, where $\Sigma^{t_{n}}_{i_{n}}$ is the unique component of $N_{k}\cap L_{t_{n}}$ passing though $p_{t_{n}}$.

\begin{figure}[H]
\begin{center}
\begin{tikzpicture} 
\draw (-9,-4) -- (3, -4);
\draw (-7,4.1) -- (5,4.1);
\draw (-9, -4)--(-7, 4.1);
\draw (3, -4)--(5, 4.1);

\fill[pink]  (-0.9, 0.75) arc(0:360: 1.6 and 1.2);

\draw (2.1, 1) arc (360: 0: 4 and 3);
\draw[dashed] (2,0) arc (0: 169: 4.05 and 3);
\draw (2, 0) arc (365 :160: 4.1 and 3);

\draw (-0.9, 0.75) arc(0:360: 1.6 and 1.2);
\draw[dashed] (-1, -.025) arc(0:360: 1.5 and 1.2);
\node(Os) at (2,-3) {$L_{t_{\infty}}$};
\node(Os) at (-1, -2.5) {$\Sigma_{\infty}$};
\node(Os) at (0, 0) {$\Sigma^{t_{n}}_{i_{n}}$};
\node(Os) at (-2.4, -0.75) {$B^{\Sigma_{\infty}}(p_{\infty})$};
\node(Os) at (-6, -3) {$\pi^{-1}(B^{\Sigma_{\infty}}(p_{\infty}))\cap \Sigma^{t_{n}}_{i_{n}}$};

\draw[->] (-5.8, -2.7) -- (-2.4, 0.5);
 \end{tikzpicture}
 
 \caption{}
\end{center}
\end{figure}
\par

Let $D\subset L_{t_{\infty}}$ be a disc containing ${\Sigma}_{{\infty}}$. (Its existence is ensured by the fact that $L_{t_{\infty}}$ is homeomorphic to $\mathbb{R}^{2}$.) From Definition \ref{converge}, there exists  $\epsilon_{1}>0$ and an integer $N$ such that $$\Sigma^{t_{n}}_{i_{n}}\subset D(\epsilon_{1}),~\text{for}~n\geq N, 
 $$
where $D(\epsilon_{1})$ is the tubular neighborhood of $D$ with radius $\epsilon_{1}$.\par
Let $\pi:D(\epsilon_{1})\rightarrow D$ be the projection. 
By Remark \ref{graph}, for $n$ large enough, the restriction of $\pi$ to each component of $L_{t_{n}}\cap D(\epsilon_{1})$ is injective.

Hence, $\pi|_{\Sigma^{t_{n}}_{i_{n}}}:\Sigma^{t_{n}}_{i_{n}}\rightarrow D$ is injective. That is to say, $\Sigma^{t_{n}}_{i_{n}}$ is a normal graph over a subset of $D$.  
 Therefore, $\{\Sigma^{t_{n}}_{i_{n}}\}$ converges to ${\Sigma}_{\infty}$ with multiplicity one. 
From Definition \ref{converge}, there is a geodesic disc $B^{\Sigma_{\infty}}(p_{\infty})\subset \Sigma_{\infty}$ centered at $p_{\infty}$ so that for $n$ large enough

\vspace{2mm}
\noindent\emph{$(\ast)$: $\pi^{-1}(B^{\Sigma_{\infty}}(p_{\infty}))\cap \Sigma^{t_{n}}_{i_{n}}$ is connected and a normal graph over $B^{\Sigma_{\infty}}(p_{\infty})$. }
\vspace{2mm}

\vspace{2mm}
\noindent\emph{\textbf{Step 3}: Get a contradiction.}\par
\vspace{2mm}
From Definition \ref{lam}, there exists a neighborhood $U$ of $p_{\infty}$ and a coordinate map $\Phi$, such that each component of $\Phi(\mathscr{L}\cap U)$ is $\mathbb{R}^{2}\times \{x\}\cap \Phi(U)$ for some $x\in \mathbb{R}$. Choose the disc $B^{\Sigma_{\infty}}(p_{\infty})$ and $\epsilon_{1}$ small enough such that $\pi^{-1}(B^{\Sigma_{\infty}}(p_{\infty}))\subset U$. We may assume that $U=\pi^{-1}(B^{\Sigma_{\infty}}(p_{\infty}))$.

 From ($\ast$),  $\Sigma^{t_{n}}_{i_{n}}\cap U\subset L_{t_{n}}$ is connected and a graph over $B^{\Sigma_{\infty}}(p_{\infty})$, for $n$ large enough. Since $\partial B_{j_{t_{n}}}\cap U\subset L_{{t_{n}}}$ equals $\Sigma^{t_{n}}_{i_{n}}\cap U$, it is also connected. Therefore $\Phi(\partial B_{j_{t_{n}}}\cap U)$ is the set $\mathbb{R}^{2}\times \{x_{t_{n}}\}\cap \Phi(U)$ for some $x_{t_{n}}\in \mathbb{R}$. In addition, $\Phi(\Sigma_{\infty}\cap U)$ equals $\mathbb{R}^{2}\times\{x_{\infty}\}\cap\Phi(U)$ for some $x_{\infty}\in \mathbb{R}$. Since $\lim\limits_{n\rightarrow \infty}p_{t_{n}}=p_{\infty}$, we have $\lim\limits_{n\rightarrow\infty}x_{t_{n}}=x_{\infty}$.  \par
\begin{figure}[H]
\begin{center}
\begin{tikzpicture} 

\node[draw,ellipse,minimum height=100pt, minimum width = 100pt,thick] (S) at (-4,0){}; 
\node[draw,ellipse,minimum height=100pt, minimum width = 100pt,thick] (S) at (3,0){}; 

\begin{scope}
    \clip (3,0) circle (1.75cm);
    \fill[pink] (1, 1) rectangle (7, -3);
  \end{scope}
  \begin{scope}
  \clip (-4,0) circle (1.75cm);
  \fill[pink] (-8,0.5) rectangle (1, 10);
  \end{scope}
\draw  (1,0) -- (5,0);
\draw (1,0.5) -- (5,0.5);
\draw(1, 1) -- (5,1);

\draw (-6,0) --(-2,0);
\draw (-6, 1)--(-2, 1);
\draw (-6, 0.5)--(-2, 0.5);

\node(Os) at (0.9,0) {$x_{\infty}$};
\node(Os) at (0.9,0.5) {$x_{t_{n'}}$};
\node(Os) at (0.9,1) {$x_{t_{n}}$};

\node(Os) at (-1.7, 0) {$x_{\infty}$};
\node(Os) at (-1.7,1) {$x_{t_{n'}}$};
\node(Os) at (-1.7,0.5) {$x_{t_{n}}$};

\node(OS) at (3, 1) {$\Phi(U)$};
\node(Os) at  (3, -1){$\Phi(U\cap B_{j_{t_{n}}})$};

\node(OS) at (-4, -1) {$\Phi(U)$};
\node(Os) at  (-4, 1){$\Phi(U\cap B_{j_{t_{n}}})$};
\end{tikzpicture}
\caption{}
\end{center}
\end{figure}

The set $U\setminus \partial B_{j_{t_{n}}}$ has two components. Therefore, $\Phi(B_{j_{t_{n}}}\cap U)$ is $\Phi(U)\cap \{x| x_{3}> x_{t_{n}}\}$ or $\Phi(U)\cap \{x| x_{3}< x_{t_{n}}\}$. For $n$ large enough, there exists some $n'\neq n$ such that $\mathbb{R}^{2}\times \{x_{t_{n'}}\}\cap \Phi(U) \subset \Phi(B_{j_{t_{n}}}\cap U)$. This implies that $B_{j_{t_{n}}}\cap B_{j_{t_{n'}}}$ is non-empty.\par
Since $S$ consists of maximal elements in $(\{B^{t}_{i}\}, \subset)$, the set $B_{j_{t_{n}}}\cap B_{j_{t_{n'}}}$ is empty  which leads to a contradiction. This finishes the proof.
\end{proof}

\subsection{The finiteness of $S$ implies Theorem \ref{D}}

We will explain how to deduce Theorem \ref{D} from the finiteness of $S$.\par

 \begin{proof}Let $k_0$ and  $S$ be  assumed as in Section 6.2. Lemma \ref{finite1} and Lemma \ref{finite2} imply that $S$ is finite for $k\geq k_0$. 
 
 We may assume that $\partial B_{j}$ intersects $\partial N_{k}(\epsilon)$ transversally for each $B_{j}\in S$. Remark that each $B_{j}\in S$ equals some $B^{t}_{i}$ and $\partial B_{j}\cap \partial N_{k}(\epsilon)$ is equal to $\Sigma^{t}_{i}\cap \partial N_{k}(\epsilon)$. Since each $\Sigma^{t}_{i}$ is properly embedded (see Theorem \ref{proper}), $\partial N_{k}(\epsilon)\cap(\cup_{B_{j}\in S}\partial B_{j}):=\{c_{i}\}_{i\in I}$ has finitely many components.  Each component is an embedded circle.

 By Lemma \ref{trivial} and Remark \ref{trivial1}, each $c_{i}$ is contractible in $\partial N_{k}(\epsilon)$ and bounds a unique closed disc $D_{i}\subset \partial N_{k}(\epsilon)$ for $k\geq k_0$. The set $(\{D_{i}\}, \subset)$ is a partially ordered set and finite. 
 
Let $\{D_{j'}\}_{j'\in J'}$ be the set of maximal elements, where  $J'$ is finite.
The boundary of $\partial N_{k}(\epsilon)\cap \overline{B}_{j}$ is contained in $\amalg_{j'\in J'}D_{j'}$ since it is  a subset of $\partial B_{j}\cap \partial N_{k}(\epsilon)\subset\amalg_{i\in I}c_{i}$ for each $B_{j}\in S$.

\vspace{1mm}

We have that for any $B_{j}\in S$, $\partial N_{k}(\epsilon)\cap \overline{B_{j}}$ is contained in $\amalg_{j'\in J'}D_{j'}$. The reason is as follows:

If not, $\partial N_{k}(\epsilon)\setminus \amalg_{j'\in J'} D_{j'}$ is contained in $\partial N_{k}(\epsilon)\cap \overline{B}_{j}$ for some $B_{j}\in S$. This implies that the composition of the two maps $\pi_{1}(\partial N_{k}(\epsilon)\setminus (\amalg_{j'\in J'}D_{j'}))\rightarrow \pi_{1}(\overline{B}_{j})\rightarrow\pi_{1}({N}_{k})$ is surjective and not a zero map. This contradicts the fact  the induced map $\pi_{1}({B}_{j})\rightarrow \pi_{1}({N}_{k})$ is trivial.

\vspace{1mm}

Therefore, $\cup_{B_{j}\in S}\overline{B}_{j}\cap \partial N_{k}(\epsilon)$ is contained in $\amalg_{j'\in J'} D_{j'}$. From Proposition \ref{cover}, $\mathscr{L}\cap \partial N_{k}(\epsilon)$ is contained in a disjoint union of finitely many discs $\{D_{j'}\}_{j'\in J'}$. \end{proof}

\section{The proof of The Main Theorem}
In the section, we use Theorem \ref{D} to complete the proof of Theorem \ref{B}. 

Let $(M, g)$ be a complete contractible genus one $3$-manifold, where $M$ is an increasing union of solid tori $\{N_k\}_k$ and the geometric index $I(N_k, N_{k+1})\geq 2$ (see Theorem \ref{genus one}). 

As in Section 5, a meridian  $\gamma_k$ of $N_k$ bounds an embedded disc $\Omega_k\subset N_k$ with the property that $\Omega_k\cap N_{k-1}$ is stable minimal for $g$. Theorem  \ref{limits} tells that $\{\Omega_k\}$ sub-converges towards a minimal lamination $\mathscr{L}$.  Each leaf of $\mathscr{L}$ is a complete  minimal surface. 

\vspace{2mm}

In the following, we explain the proof of Theorem \ref{B}.

\begin{proof} Suppose that a contractible genus one $3$-manifold $M$ has a complete metric $g$ with positive scalar curvature. 

Let $\{N_k\}$, $\{\gamma_k\}$, $\{\Omega_k\}$ and $\mathscr{L}$ be assumed as above. The positivity of scalar curvature shows that each leaf of $L_t$ is a complete stable minimal plane (see Theorem \ref{stability}, Theorem \ref{non-compact} and [Theorem 2, Page 211] in \cite{SY}). In addition, Theorem \ref{D} gives an integer $k_0$ so that  there is a constant $\epsilon>0$ satisfying that $$\mathscr{L}\cap \partial N_{k_0}(\epsilon)\subset\amalg_{i=1}^{m} D_{i},$$ where each $D_{i}$ is a closed disc in $\partial N_{k_0}(\epsilon)$ and $N_{k_0}(\epsilon)$ is defined in Theorem \ref{D}. 

\vspace{2mm}

We now work on the solid torus $N_{k_0}(\epsilon)$.  Let $\theta\subset  \partial N_{k_0}(\epsilon)\setminus \amalg^{m}_{i} D_{i}$ be a longitude  of $N_{k_0}(\epsilon)$. It is also contained in $\partial N_{k_0}(\epsilon)\setminus \mathscr{L}$. 

Since $\mathscr{L}$ is a closed set in $M$, the sets $\mathscr{L}\cap {N}_{k_0+1}$ and $\theta$ are two disjoint closed sets. Choose a neighborhood $U$ of $\mathscr{L}\cap {N}_{k_0+1}$ so that $U\cap \theta=\emptyset$. Since $\{\Omega_{k}\}$ subconverges to $\mathscr{L}$, there exists an integer $k$, large enough, such that $\Omega_{k}\cap \text{Int}~N_{k_0+1}$ is in $U$. In particular, $\Omega_{k}\cap\theta=\emptyset$.  

The set $\Omega_{k}$ is a meridian disc of $N_{k}$ (see Section 5.1). 
The geometric index  $I(N_{k_0}(\epsilon), N_{k})>0$ (for $k$ large enough) implies that there is a meridian $\alpha_k\subset \Omega_{k}\cap \partial N_{k_0}(\epsilon)$ of $N_{k_0}(\epsilon)$ (see Lemma \ref{one}).  The intersection number of $\theta$ and $\alpha_k$ is $\pm 1$. Namely, $\theta$ must intersect $\alpha_{k}$. This contradicts the last paragraph and finishes the proof of Theorem \ref{B}.
\end{proof}

 We use the standard argument in \cite{Kazdan} to have that 
 
 \vspace{2mm}

\noindent\textbf{Corollary~~~~~\ref{C}~~~~~}\emph{:No contractible genus one $3$-manifold admits a complete metric of nonnegative scalar curvature.  }

\section*{Appendix A}

\vspace{1mm} 
\noindent \textbf{Lemma \ref{existence}}   \emph{Let $(\Omega, \partial \Omega) \subset (N, \partial N)$ be a 2-sided embedded disc with some open sub-discs removed, where $N$ is a closed solid torus. Assume that each circle $\gamma_{i}$ is contractible in $\partial N$, where $\partial \Omega=\amalg_{i}\gamma_{i}$. Then $N\setminus \Omega$ has two connected components. Moreover, there is a unique component $B$ satisfying that the induced map $\pi_{1}(B)\rightarrow\pi_{1}(N)$ is trivial. }\par
\vspace{1mm}
\begin{proof} We argue by contradiction. Suppose that $N\setminus \Omega$ is path-connected. That is to say, there is an embedded circle $\sigma\subset N$ which intersects $\Omega$ transversally at one point.\par 
Since each $\gamma_{i}$ is contractible in $\partial N$, it bounds a unique disc $D_{i}\subset\partial N$. The surface $\hat{\Omega}:=\Omega\bigcup_{i}\cup_{\gamma_{i}}D_{i}$ also intersects $\sigma$ transversally at one point. The intersection number between $\hat{\Omega}$ and $\sigma$ is $\pm 1$.\par
However, $\hat{\Omega}$ is the image of a continuous map $g:\mathbb{S}^{2}\rightarrow N$. It is contractible in $N$ (since $\pi_{2}(N)=\{1\}$). The intersection number of $\hat{\Omega}$ and $\sigma$ must be zero, which leads to a contradiction. \par

Therefore, $N\setminus \Omega$ is not connected. Since $\Omega$ is 2-sided and connected, $N\setminus \Omega$ just has two components $B_{0}$ and $B_{1}$.

Next, suppose that the induced map $i_{k}:\pi_{1}(B_{k})\rightarrow \pi_{1}(N)$ is non-trivial for $k=0,1$. 

The image $\text{Im}(i_{k})$ of the map $i_{k}$ is a non-trivial subgroup in $\pi_{1}(N)$. Since $\pi_{1}(N)$ is isomorphic to $\mathbb{Z}$, then $\text{Im}(i_{0})\cap \text{Im}(i_{1})$ is also a non-trivial subgroup. That is to say, there exist two closed curves $c_{0}\subset B_{0}$  and $c_{1}\subset B_{1}$ such that (1) $[c_{0}]$ is non-trivial in $\pi_{1}(N)$; (2) $c_{0}$ is homotopic to $c_{1}$ in $N$.

We can apply the classical theorem of Morrey \cite{M}. It asserts that if $[c_{0}]\neq 0$ in $\pi_{1}(N)$, there is an area-minimizing annulus $A\subset N$ joining $c_{0}$ to $c_{1}$. It is an immersed surface. 

We may suppose that $A$ intersects $\Omega$ transversally and $A\cap \Omega$ is a union of some circles $\{\hat{\gamma}_{j}\}_{j}$. The boundary of $A_{0}:=A\cap B_{0}$ is a disjoint union of $c_{0}$ and $\cup_j\hat{\gamma}_{j}$. Therefore, $[c_{0}]=\sum_{j}[\hat{\gamma}_{j}]$ in $H_{1}(N)\cong \pi_{1}(N)$. 

We consider the image $\hat{\Omega}$ of a $2$-sphere as mentioned above. The map $\pi_{1}(\Omega)\rightarrow \pi_{1}(\hat{\Omega})$ is trivial. Therefore, the map $\pi_{1}(\Omega)\rightarrow \pi_{1}(\hat{\Omega})\rightarrow \pi_{1}(N)$ is also trivial. 

Therefore, each $\hat{\gamma}_{j}$ is nullhomotopic in $N$. Thus, $[c_{0}]=1$ in $\pi_{1}(N)$, which contradicts to the choice of $c_{0}$. 
We conclude that one of the induced maps $\pi_{1}(B_{i})\rightarrow\pi_{1}(N)$ is trivial. \par
Finally, we prove the uniqueness. Suppose that the two induced maps are both trivial. This implies that for each $i$, the map $H_{1}(B_{i})\rightarrow H_{1}(N)$ is trivial. 
 Applying Mayer-Vietoris Sequence to $N=B_{0}\cup_{\Omega}B_{1}$, one has: 
 $$H_{1}(B_{0})\oplus H_{1}(B_{1})\rightarrow H_{1}(N)\rightarrow \hat{H}_{0}(\Omega).$$
 Since $\Omega$ is connected, $\hat{H}_{0}(\Omega)$ is trivial. Therefore, $H_{1}(N)$ is also trivial.  
  This contradicts the fact that $H_{1}(N)$ is isomorphic to $\mathbb{Z}$.  This completes the proof.
\end{proof}

Consider two disjoint surfaces $(\Omega_{1}, \partial\Omega_{1})$ and $(\Omega_{2}, \partial\Omega_{2})$ as assumed in Lemma \ref{existence}. For $t=1,2$, $N \setminus \Omega_{t}$ has  two components. Let $B_{t}$ be the unique component of $N\setminus \Omega_{t}$ satisfying $\pi_{1}(B_{t})\rightarrow \pi_{1}(N)$ is trivial. One has:

\vspace{1mm}
\noindent\textbf{Lemma \ref{sep}}  \emph{Let $(\Omega_{1}, \partial\Omega_{1})$ and $(\Omega_{2}, \partial\Omega_{2})$ be two disjoint surfaces as  in Lemma \ref{existence}. Assume that for  $t=1,2$, $N\setminus \Omega_{t}$ has a unique component $B_{t}$ with the property that the map $\pi_{1}(B_{t})\rightarrow \pi_{1}(N)$ is trivial. Then one of the following holds:
\begin{enumerate}
\item $B_{1}\cap B_{2}=\emptyset$;
\item $B_{1}\subset B_{2}$;
\item$B_{2}\subset B_{1}$.
\end{enumerate} 
}
\vspace{1mm}
\begin{proof} Suppose $B_{1}\cap B_{2}$ and $B_{1}\setminus B_{2}$ are both nonempty. Say, there are two points $p_{1}\in B_{1}\setminus  B_{2}$ and $p_{2}\in B_{1}\cap B_{2}$. 

First, $\Omega_{2}$ is contained in $B_{1}$. 
The reason follows as below: $B_{1}$ includes a curve $\gamma$ joining $p_{1}$ and $p_{2}$ (since $B_{1}$ is connected). The curve $\gamma$ must intersect $\Omega_{2}$ at some point(s). 
Hence, $\Omega_{2}\cap B_{1}$ is not empty. 
Since $\Omega_{1}\cap \Omega_{2}$ is empty, $\Omega_{2}$ lies in one of component of $N\setminus \Omega_{1}$. 
Therefore, $\Omega_{2}$ is contained in $B_{1}$.\par
Second, $\Omega_{2}$ cuts $B_{1}$ into two components. Otherwise, there is a circle in $B_{1}$ which intersects $\Omega_{2}$ at one point. As argued in Lemma \ref{existence}, such a circle does not exist. \par
Finally, take the component $B$ of $B_{1}\setminus \Omega_{2}$ satisfying that $\partial B\cap \Omega_{1}$ is empty. Then, $B$ is also a component of $N\setminus \Omega_{2}$. In addition, the map $\pi_{1}(B)\rightarrow \pi_{1}(B_{1})\rightarrow \pi_{1}(N)$ is trivial. From the uniqueness of $B_{2}$, one has that $B=B_{2}$. This implies $B_{2}\subset B_{1}$. 
\end{proof}

\section*{Appendix B} In this appendix, we will prove Theorem \ref{stability} and Theorem \ref{non-compact}. Recall that $(M, g)$ is a complete contractible genus one $3$-manifold, where $M$ is an increasing union of solid tori $\{N_{k}\}_{k}$. 

For each $k$, there is a meridian $\gamma_{k}$ of $N_k$ and a metric $g_{k}$ on $N_k$ so that 
(1) $(N_k, g_k)$ has mean convex boundary;
(2) $g_{k}|_{N_{k-1}}=g|_{N_{k-1}}$. We use  Theorem \ref{stable} to find an area-minimizing disc $\Omega_k$ in $(N_k, g_k)$ with boundary $\gamma_k$.  The sequence $\{\Omega_k\}$ sub-converges to the minimal lamination $\mathscr{L}$. Each leaf is a complete minimal surface. (See Section 5)

For our convenience, we may assume that the sequence $\{\Omega_k\}_k$ converges to $\mathscr{L}$.

\vspace{1mm}

\noindent\textbf{Theorem 5.8.}
\emph{Each leaf in $\mathscr{L}$ is stable minimal. }

\vspace{1mm}

As laminations, $\Omega_k$ converges to $\mathscr{L}$ in $C^{1, \alpha}$-topology. However, $\Omega_k$ locally converges to minimal hypersurfaces in $C^{\infty}$-topology. Since each $\Omega_k$ is stable minimal, then we have that each leaf in $\mathscr{L}$ is stable minimal. 

\vspace{2mm}

\noindent\textbf{Theorem 5.9.} \emph{If $(M, g)$ has positive scalar curvature, each leaf in $\mathscr{L}$ is non-compact.}

\begin{proof} We argue by contradiction. Suppose that there exists a compact leaf $L_{t}$ in $\mathscr{L}$.

\vspace{2mm}

\textbf{Step 1}: Topology of $L_{t}$

\vspace{2mm}

 The positivity of the scalar curvature shows that $L_{t}$ is a $2$-sphere or a projective plane (see [Theorem 5.1, Page 139] in \cite{SY3}).

If $L_{t}$ is a projective plane,  $L_{t}$ is 1-sided. Hence, $M\setminus L_{t}$ is connected. There is a an embedded curve $\gamma$ in $M$ which intersects $L_{t}$ transversally at one point. The intersection numberof $L_{t}$ and $\gamma$ is $\pm 1$. 

However, $\gamma$ is homotopically trivial, since $M$ is contractible. Thus,  the intersection number of $\gamma$ and $L_t$ is zero, a contradiction.

We can conclude that $L_t$ is a $2$-sphere. 

\vspace{2mm}

\textbf{Step 2:} Area Estimate. 

\vspace{2mm}

Since  $M$ is contractible, it  is irreducible (see Section 2.1). Then $L_t$ bounds a $3$-ball $B$. Let $N_{2\epsilon}(B)$ be the tubular neighborhood of $B$ with radius $2\epsilon$. The set $N_{2\epsilon}(L_t)$ is a subset of $N_{2\epsilon}(B)$. 

Since $N_{2\epsilon}(B)$ is relatively compact, there is a positive integer $k_{0}$ so that $N_{2\epsilon}(B)\subset N_{k_{0}-1}$.

\vspace{2mm}

From now on, we fix the integer $k> k_{0}$. Let $\{\Sigma_k^j\}_j$ be the set of components of $\Omega_k\cap N_{2\epsilon}(L_{t})$. 

\vspace{2mm}

In the following, we show that there is a constant $C$, independent of $k$ and $j$, so that the area of $\Sigma_{k}^{j}$ in $(M, g)$ is less than $C$.

\vspace{2mm}

We may assume that $\Omega_k$ intersects $\partial N_{2\epsilon}(B)$ transversally. The intersection $\Omega_k\cap \partial N_{2\epsilon}(B):=\{c_i\}_i$ has finitely many components. Each component $c_{i}$ is a circle and bounds a  closed disc $D_{i}$ in $\Omega_{k}$.

\vspace{2mm}

Since $\partial N_{2\epsilon}(B)$ is a $2$-sphere, there is an embedded disc $D'_{i}\subset \partial N_{2\epsilon}(B)$ with boundary $c_i$. 
We have that  for any $D_{i}$,
\begin{equation*}
\text{Area}(D_{i}, g_{k})\leq \text{Area}(D'_{i}, g_{k}),
\end{equation*}where $\text{Area}(D_{i}, g_{k})$ is the area of $D_i$ in $(N_k, g_k)$.
 (If not, we consider the disc $(\Omega_{k}\setminus D_{i})\cup_{\gamma_i}D'_i$ with boundary $\partial \Omega_{k}$. Its area is less than the area of $\Omega_{k}$ in $(N_{k}, g_k)$, a contradiction. )

\vspace{2mm}

Let $C^{max}$ be the set of maximal circles of $\{c_i\}_i$ in $\Omega_{k}$. We have that 
\begin{equation*}
\Omega_{k}\cap N_{2\epsilon}(L_t)\subset \Omega_{k}\cap N_{2\epsilon}(B)\subset \amalg_{\gamma_i\in C^{max}}D_{i}.\end{equation*}
Then, $\Sigma_{k}^j$ is a subset of some $D_{i}$. 

\vspace{2mm}

Recall that $g_{k}|_{N_{2\epsilon}(B)}=g|_{N_{2\epsilon}(B)}$ for $k>k_0$. For each $k$ and $j$, we have that 
\begin{equation*}
\begin{split}
&\text{Area}(\Sigma_{k}^{j}, g_{k})=\text{Area}(\Sigma_k^j, g)\\
&\text{Area}(\partial N_{2\epsilon}(B), g_{k})=\text{Area}(\partial N_{2\epsilon}(B), g)
\end{split}\end{equation*}

We then have that 
\begin{equation*}
\begin{split}
\text{Area}(\Sigma_{k}^j, g)&=\text{Area}(\Sigma_{k}^j, g_k)\leq \text{Area}(D_{i}, g_{k})\\
& \leq \text{Area}(D'_i, g_{k})\leq \text{Area}(\partial N_{2\epsilon}(B), g_{k})\\
&=\text{Area}(\partial N_{2\epsilon}(B), g). 
\end{split}
\end{equation*}

We conclude that for each $k> k_0$ and $j$, 
\begin{equation*}
\text{Area}(\Sigma_{k}^j, g)\leq \text{Area}(\partial N_{2\epsilon}(B), g). 
\end{equation*} 

\vspace{2mm}

\textbf{Step 3}: Contradiction. 

\vspace{2mm}

Choose a point $p\in L_t$ and a point $p_{k}\in \Omega_{k}\cap N_{\epsilon}(L_t)$ so that $\lim_{k\rightarrow \infty}p_k=p$. 

Let $\Sigma_{k}^{j_{k}}$ be the component of $\Omega_k\cap N_{2\epsilon}(L_t)$ passing through $p_{k}$. As the proof in Step 2, we have that for $k> k_0$,
\begin{equation*}
\text{Area}(\Sigma_{k}^{j_{k}}, g)\leq \text{Area}(\partial N_{2\epsilon}(B), g). 
\end{equation*}

From the result of Schoen \cite{Sc}, the curvatures of these surfaces $\{\Sigma_k^{j_{k}}\}_k$ are uniformly bounded in $N_{2\epsilon}(L_t)$. 
By Theorem \ref{convergence}, the sequence $\{\Sigma_{k}^{j_{k}}\}$ sub-converges smoothly to a properly embedded surface $\Sigma$ with finite multiplicity in $N_{\epsilon}(L_t)$. 

For our convenience, we assume that $\{\Sigma_{k}^{j_k}\}$ converges smoothly to  $\Sigma$  in $N_{\epsilon}(L_t)$.  The limit $\Sigma\subset \mathscr{L}$ is a disjoint union of embedded surfaces. Its boundary is contained in $\partial N_{\epsilon}(L_t)$. In addition, $p$ is contained in $\Sigma$. Hence, $L_t$ is one component  of $\Sigma$. 

Since $\Sigma$ is properly embedded, the set $\Sigma':=\Sigma\setminus L_t$ is a closed set.  The sets $\Sigma'$ and $L_{t}$ are two disjoint closed sets. 
Choose $\delta<\epsilon/2$ small enough so that 
\begin{equation*}
N_{2\delta}(L_{t})\cap N_{2\delta}(\Sigma')=\emptyset.
\end{equation*}

\vspace{2mm}

\noindent\textbf{Claim:} For $k$ large enough, $\Sigma_{k}^{j_{k}}$ is contained in $N_{\delta}(L_{t})\amalg (N_{2\epsilon}(L_{t})\setminus N_{2\delta}(L_t))$. 

\vspace{2mm}

Since $\Sigma_{k}^{j_k}$ is a subset of $N_{2\epsilon}(L_t)$, $\Sigma_{k}^{j_{k}}\setminus N_{2\delta}(L_t)$ is contained in $N_{2\epsilon}(L_{t})\setminus N_{2\delta}(L_t)$.  It is sufficient to show that $\Sigma_{k}^{j_{k}}\cap N_{2\delta}(L_t)$ is contained in $N_{\delta}(L_t)$. 

\vspace{2mm}

For $k$ large enough, $\Sigma_{k}^{j_k}\cap N_{\epsilon}(L_t)$ is contained in $ N_{\delta}(\Sigma)$, since $\Sigma^{j_k}_k$ converges to $\Sigma$ in $N_{\epsilon}(L_t)$. Hence, $\Sigma_k^{j_k}\cap N_{2\delta}(L_t)$ is a subset of $N_{2\delta}(L_{t})\cap N_{\delta}(\Sigma)$.
From the choice of $\delta$, we have that 

a) $N_{\delta}(\Sigma)$ is equal to $N_{\delta}(L_t)\amalg N_{\delta}(\Sigma')$;

b) $N_{\delta}(\Sigma')\cap N_{2\delta}(L_{t})$ is empty.

\vspace{2mm}

By a), $N_{2\delta}(L_t)\cap N_{\delta}(\Sigma)$ is equal to $N_{\delta}(L_t)\amalg (N_{\delta}(\Sigma')\cap N_{2\delta}(L_{t}))$. From b), it is equal to $N_{\delta}(L_t)$.  
Therefore, $\Sigma^{k}_{j_{k}}\cap N_{2\delta}(L_t)$ is contained in $N_{\delta}(L_t)$.

This finishes the proof of the claim. 

\vspace{2mm}

For $k$ large enough, $p_{k}$ is contained in $N_{\delta}(L_{t})$. Namely, $\Sigma_{k}^{j_k}\cap N_{\delta}(L_t)$ is non-empty. In addition, since $\partial \Sigma_{k}^{j_k}\subset \partial N_{2\epsilon}(L_t)$ is non-empty, $\Sigma_{k}^{j_k}\cap (N_{2\epsilon}(L_t)\setminus N_{2\delta}(L_t))$ is also nonempty. 

The sets $N_{\delta}(L_t)$ and $\overline{N_{2\epsilon}(L_t)\setminus N_{2\delta}(L_t)}$ are disjoint. Since $\Sigma_{k}^{j_k}$ is connected, we use the claim to have that one of these two sets, $\Sigma_{k}^{j_k}\cap N_{\delta}(L_t)$ and $\Sigma_{k}^{j_k}\cap (N_{2\epsilon}(L_t)\setminus N_{2\delta}(L_t))$, is empty. This is in contradiction with the last paragraph.  

We can conclude that each leaf $L_t$ is non-compact. 
\end{proof}

\section*{Acknowledgement}

I would like to express my deep gratitude to my supervisor, G\'erard Besson for suggesting this problem, also for his endless support, constant encouragement. I also thank Harold Rosenberg, Laurent Mazet and  Yunhui Wu for helpful discussions related to this work.  

I  am also very grateful to the referees for many helpful comments that improved the exposition of this paper. This research is supported by ERC Advanced Grant 320939, GETOM and partially by  the Special Priority Program SPP 2026 `` Geometry at infinity ".


\begin{thebibliography}{{Coh}99}
\bibitem[And85]{And} Michael Anderson,
{\it Curvature estimates for minimal surfaces in $3 $-manifolds},
Annales scientifiques de l'{\'E}cole Normale Sup{\`e}rieure. {\bf 18} (1985) 89—105,
MR0803196, Zbl 0578.49027.

\bibitem[BBM11]{BBM}Laurent Bessi\`eres and G\'erard Besson and Sylvain Maillot, 
{\it Ricci flow on open 3-manifolds and positive scalar curvature}, 
Geom. Topol. {\bf 15} (2011) 927—975,
MR2821567, Zbl 1237.53064.

\bibitem[CM04]{CM} Tobias  Colding and William  Minicozzi
{\it The space of embedded minimal surfaces of fixed genus in a 3-manifold. IV: Locally simply connected}, 
Ann. Math. (2) {\bf 160} (2004) 573–615, 
 MR2123933, Zbl 1076.53069. 

\bibitem[CM11]{CM1} Tobias Colding and William Minicozzi, 
{\it A course in minimal surfaces}, 
American Mathematical Soc.  { \bf 121}, (2011), 
MR2780140,  Zbl 1242.53007. 


\bibitem[{Coh}35]{CV} Stefan {Cohn-Vossen}, 
{{\it K\"urzeste Wege und Totalkr\"ummung auf Fl\"achen.}},
{Compos. Math.}, {\bf 2}, (1935) 69--133,
MR1556908,  Zbl 0011.22501. 



\bibitem[FCS80]{FS} D. Fischer-Colbrie  and Richard Schoen,
{\it The structure of complete stable minimal surfaces in 3-manifolds of non-negative scalar curvature}, 
Communications on Pure and Applied Mathematics, {\bf 33}, (1980) 199–-211,
MR0562550,  Zbl 0439.53060.


\bibitem[{Fis}85]{F} D. {Fischer-Colbrie},
{\it On complete minimal surfaces with finite Morse index in three manifolds},	
Invent. Math., {\bf 82}, (1985) 121--132,
MR808112,  Zbl 0573.53038.

\bibitem[Fre96]{FKR} Frensel, Katia Rosenvald,
{\it Stable complete surfaces with constant mean curvature}, 
Boletim da Sociedade Brasileira de Matem{a}tica, {\bf 27}, (1996) 129-144,
MR1418929, Zbl 0890.53054.

\bibitem[GL83]{GL} Mikhael Gromov and Blaine Lawson,
{\it Positive scalar curvature and the Dirac operator on complete Riemannian manifolds},
Publications Math{\'e}matiques de l'Institut des Hautes {\'E}tudes Scientifiques, {\bf 58}, (1983) ,83–-196,
MR0720933, Zbl 0538.53047. 


\bibitem[GRW18]{GRW} Garity, Dennis and Repov{\v{s}}, Du{\v{s}}an and Wright, David,
{\it Contractible 3-manifolds and the double 3-space property},	
Transactions of the American Mathematical Society, {\bf 370} (2018), 2039—2055
MR3739201, Zbl 1381.57015. 

\bibitem[GT15]{GT}
David Gilbarg and Neil Trudinger.
{\it Elliptic partial differential equations of second order}
 Springer, (2015), 
 MR1814364, Zbl 1436.00023.

\bibitem[Hat00]{HA}
Allen Hatcher.
{\it Notes on basic 3-manifold topology}, 2000.

\bibitem[HP71]{HP}
LS~Husch and TM~Price.
{\it Addendum to: Finding a boundary for a 3-manifold},
Annals of Mathematics, {\bf 93} 486--488 (1971),
MR0286113,  Zbl 0214.22301.

\bibitem[Kaz82]{Kazdan}
Jerry Kazdan.
 {\it Deformation to positive scalar curvature on complete manifolds},
Mathematische Annalen, {\bf 261(2)} 227--234 (1982),
MR0675736,  Zbl 0476.53022.

\bibitem[McM62]{Mc}
DR~McMillan.
{\it Some contractible open 3-manifolds},
Transactions of the American Mathematical Society,
 {\bf 102(2)}, 373--382 (1962), 
 MR0137105, Zbl 0111.18701.

\bibitem[Mor09]{M}
Charles Morrey.
{\it Multiple integrals in the calculus of variations}.
Springer Science \& Business Media (2008),
MR2492985, Zbl 1213.49002.

\bibitem[MRR02]{MRR}
William Meeks, Antonio Ros, and Harold Rosenberg.
 {\it The global theory of minimal surfaces in flat spaces}.
Springer, (2002)
MR1901611,  Zbl 0983.00044.

\bibitem[MY80]{YM}
William Meeks and Shing-Tung Yau.
{\it Topology of three dimensional manifolds and the embedding problems in
  minimal surface theory},
 Annals of Mathematics, {\bf 112(3)} 441--484(1980)
 MR0595203,  Zbl 0458.57007.
 
\bibitem[MY82]{YM1}
William Meeks and Shing-Tung Yau.
{\it The existence of embedded minimal surfaces and the problem of
  uniqueness},
 Mathematische Zeitschrift, {\bf 179(2)}151--168 (1982),
 MR0645492,  Zbl 0479.49026.

\bibitem[Per02a]{P1}
Grisha Perelman.
{\it The entropy formula for Ricci flow and its geometric applications},
 arXiv preprint math/0211159 (2002),
 Zbl 1130.53001.

\bibitem[Per02b]{P2}
Grisha Perelman.
{\it Ricci flow with surgery on three-manifolds},
 arXiv preprint math/0303109 (2002),
 Zbl 1130.53002.

\bibitem[Per03]{P3}
Grisha Perelman. 
{\it Finite extinction time for the solutions to the Ricci flow on certain
  three-manifolds},
arXiv preprint math/0307245, (2003),
 Zbl 1130.53003.

\bibitem[Rol03]{Rol}
Dale Rolfsen.
 {\it Knots and links}, 
American Mathematical Soc. {\bf 346}, (1990)
MR1277811, Zbl 0854.57002.

\bibitem[Rot12]{R}
Joseph Rotman.
 {\it An introduction to the theory of groups}, 
Springer-Verlag (1995),
MR1307623,  Zbl 0810.20001.

\bibitem[Sch53]{Sch}
Horst Schubert.
{\it Knoten und vollringe}
 Acta Mathematica, {\bf 90(1)},131--286 (1953)
MR0072482,   Zbl 0051.40403.

\bibitem[Sch83]{Sc}
Richard Schoen.
{\it Estimates for stable minimal surfaces in three dimensional manifolds}
 In {\em Seminar on minimal submanifolds}, {\bf103}  111--126.
  Princeton University Press Princeton, NJ, (1983)
 MR0795231,  Zbl 0532.53042 .

\bibitem[Sta72]{S}
John Stallings.
{\it Group theory and three-dimensional manifolds}.
 Yale University Press, (1972)
MR0415622, Zbl 0241.57001.

\bibitem[SY79a]{SY3}
Richard Schoen and Shing-Tung Yau.
{\it Existence of incompressible minimal surfaces and the topology of
  three dimensional manifolds with non-negative scalar curvature}
 Annals of Mathematics, {\bf110(1)}, 127--142, (1979),
 MR0541332, Zbl 0431.53051.

\bibitem[SY79b]{SY1}
Richard Schoen and Shing-Tung Yau.
{\it On the structure of manifolds with positive scalar curvature}
 Manuscripta mathematica, {\bf28} 159--183,(1979),
 MR0535700, Zbl 0423.53032.

\bibitem[SY82]{SY}
Richard Schoen and Shing~Tung Yau.
{\it Complete three-dimensional manifolds with positive Ricci curvature
  and scalar curvature}
 In {\em Seminar on Differential Geometry}, {\bf102}, 
  209--228. Princeton Univ. Press Princeton, NJ, (1982)
 MR0645740,  Zbl 0481.53036.

\bibitem[Whi35]{Wh}
J.H.C. Whitehead.
{\it A certain open manifold whose group is unity}, 
 { The Quarterly Journal of Mathematics}, {\bf6(1)}, 268--279(1935).





\end{thebibliography}
\end{document}